\def\argmin{{\rm argmin}}
\def\Argmin{{\rm Argmin}}
\def\lb{{\rm lb}}
\def\ub{{\rm ub}}
\let\Oldul\underline
\renewcommand{\underline}[1]{\Oldul{\smash{#1}}}
\newcommand{\cE}{\ensuremath{\mathcal{E}}}
\newcommand{\cO}{\ensuremath{\mathcal{O}}}
\newcommand{\R}{\ensuremath{\mathbb{R}}}
\newcommand{\cL}{\ensuremath{\mathcal{L}}}
\def\pi{\ensuremath{\hat{\cal K}_x(\bar x_i)}}
\newcommand{\ag}[1][]{\ensuremath{
	\ifthenelse{\isempty{#1}}
		{_{t}^{ag}}
		{_{t+#1}^{ag}}}}
\newcommand{\md}[1][]{\ensuremath{
		\ifthenelse{\isempty{#1}}
		{_{t}^{md}}
		{_{t+#1}^{md}}}}
\newcommand{\bg}[1][]{\ensuremath{
		\ifthenelse{\isempty{#1}}
		{\beta_{t}\gamma_t}
		{(\beta_{t}-1)\gamma_t}}}
\newcommand{\rst}[1][]{\ensuremath{
		\ifthenelse{\isempty{#1}}
		{\frac 1{2\eta_t}}
		{\frac 1{2\eta_{t+1}}}}}
\newcommand{\rtt}[1][]{\ensuremath{
		\ifthenelse{\isempty{#1}}
		{\frac 1{2\tau_t}}
		{\frac 1{2\tau_{t+1}}}}}
\newcommand{\xdiff}[1][]{\ensuremath{
		\ifthenelse{\isempty{#1}}
		{\frac 1{2\eta_t}\|x-x_t\|^2 - \frac 1{2\eta_t}\|x-x_{t+1}\|^2}
		{\frac {\gamma_t}{2\eta_t}\|x-x_t\|^2 - \frac {\gamma_t}{2\eta_t}\|x-x_{t+1}\|^2}}}
\newcommand{\ydiff}[1][]{\ensuremath{
		\ifthenelse{\isempty{#1}}
		{\frac 1{2\tau_t}\|y-y_t\|^2 - \frac 1{2\tau_t}\|y-y_{t+1}\|^2}
		{\frac {\gamma_t}{2\tau_t}\|y-y_t\|^2 - \frac {\gamma_t}{2\tau_t}\|y-y_{t+1}\|^2}}}
\newcommand{\bbr}{\mathbb{R}}
\def\eqnok#1{(\ref{#1})}
\def\vgap{\vspace*{.1in}}
\newcommand{\beq}{\begin{equation}}
\newcommand{\eeq}{\end{equation}}
\newcommand{\beqa}{\begin{eqnarray}}
\newcommand{\eeqa}{\end{eqnarray}}
\newcommand{\beqas}{\begin{eqnarray*}}
\newcommand{\eeqas}{\end{eqnarray*}}
\title{Fast Bundle-Level Type Methods for unconstrained and ball-constrained convex optimization\thanks{December, 2014.
This research was partially supported by NSF
grants CMMI-1254446, DMS-1319050, and ONR grant N00014-13-1-0036.
}}
\author{Yunmei Chen\thanks{Department of Mathematics, University of Florida ({\tt yun@math.ufl.edu}).} \and
Guanghui Lan \thanks{Department of Industrial and System Engineering, University of Florida ({\tt glan@ise.ufl.edu})}. \and
Yuyuan Ouyang \thanks{Department of Industrial and System Engineering, University of Florida ({\tt ouyang@ufl.edu})} .\and
Wei Zhang \thanks{Department of Mathematics, University of Florida ({\tt weizhang657@ufl.edu})}.
}
\begin{document}

\maketitle

\begin{abstract}
It has been shown in \cite{Lan13-1} that the accelerated prox-level (APL) method and its variant, the uniform smoothing level (USL) method, have optimal iteration complexity for solving black-box and structured convex programming problems without requiring the input of any smoothness information. However, these
algorithms require the assumption on the boundedness of the feasible set and their efficiency relies on the solutions of two involved subproblems. These hindered the applicability of these algorithms in solving large-scale and unconstrained optimization problems. In this paper, we first present a generic algorithmic framework to extend
these uniformly optimal level methods for solving unconstrained problems. Moreover, we introduce two new variants
of level methods, i.e., the fast APL (FAPL) method and the fast USL (FUSL) method, for solving large scale
black-box and structured convex programming problems respectively. Both FAPL and FUSL enjoy the same optimal iteration complexity as APL and USL, while the number of subproblems in each iteration is reduced from two to one. Moreover, we present an exact method to solve the only subproblem for these algorithms.
As a result, the proposed FAPL and FUSL methods have improved the performance of the APL and USL in practice significantly in terms of both
computational time and solution quality. Our numerical results on solving some large-scale least square problems and total variation based image reconstruction have shown great advantages of these new bundle-level type methods over APL, USL, and some other state-of-the-art first-order methods.
\end{abstract}

\noindent {\bf Keywords:} convex programming, first-order, optimal method, bundle-level type method, total variation, image reconstruction\\

\noindent {\bf AMS 2000 subject classification:} 90C25, 90C06, 90C22, 49M37

\setcounter{equation}{0}
\section{Introduction}
\label{secIntro}
Given a convex function $f: \R^n\rightarrow \mathbb{R}$, the main problem of interest
in this paper is:
\begin{equation}
\label{oriproblem}
f^\ast :=\min_{x\in \R^n} f(x).
\end{equation}
Throughout this paper, we assume that
the solution set $X^{\ast}$ of \eqnok{oriproblem} is nonempty.
Moreover, denoting
\begin{equation}
\label{eqnxstar}
x^{\ast}:=\argmin_{x}\{\|\overline{x} - x\|: x\in X^{\ast}\} \ \ \mbox{and} \ \ \ D^{\ast} := \| \overline{x} - x^{\ast}\|
\end{equation}
for a given initial point $\overline{x} \in \bbr^n$,
we assume in the black-box setting that for all closed sets
\begin{equation} \label{def_Ball}
\Omega\subseteq B(\overline{x}, 4D^{\ast}) := \left \{ x \in \bbr^n: \|x - \overline{x}\| \le 4D^{\ast} \right\},
\end{equation}
there exists $M(\Omega)>0$ and $\rho(\Omega)\in [0,1]$, such that\footnote{Observe that
this assumption is weaker than requiring \eqnok{smoothrelation} holds for any
$x,y \in \bbr^n$.}
\begin{equation}
  \label{smoothrelation}
 f(y)-f(x)-\left\langle f'(x),y-x\right\rangle \le\frac{M(\Omega)}{1+\rho(\Omega)}\|y-x\|^{1+\rho(\Omega)}, \ \ \forall x,y \in \Omega.
\end{equation}
Here $\|\cdot\|$ denotes the Euclidean norm, and $f'(x)\in \partial f(x)$, where $\partial f(x)$ denotes the subdifferential of $f$ at $x\in \Omega$.
Clearly, the above assumption on $f(\cdot)$ covers non-smooth ($\rho(\Omega)=0$), smooth ($\rho(\Omega)=1$) and weakly smooth ($0<\rho(\Omega)<1$) functions.

Our main goal in this paper is to develop a bundle-level method that is able to compute an approximate solution to problem \eqref{oriproblem}, with uniformly optimal iteration complexity for non-smooth, smooth and weakly smooth objective functions (see \cite{Lan13-1}). Here, the iteration complexity is described by the number of evaluations of subgradients/gradients of $f$. In addition, we aim to design the bundle-level method so that its
computational cost in each iteration is small, in order to improve its practical performance. Let us start by reviewing a few existing bundle-level methods.


\subsection{Cutting plane, bundle and bundle-level methods}
\label{secBL}
The bundle-level method originated from the well-known Kelley's cutting-plane method in 1960 \cite{Kelley60}.
Consider the convex programming (CP) problem of
\begin{align}
	\label{eqnProblemX}
	\min_{x\in X}f(x),
\end{align}
where $X$ is a compact convex set and $f$ is a closed convex function. The fundamental idea of the cutting plane method is to generate a sequence of
piecewise linear functions to approximate $f$ in $X$. In particular, given $x_1,x_2,\ldots,x_k\in X$,
we approximate $f$ by
\begin{equation}
\label{convexmodel}
m_k(x):=\max\{h(x_i,x),1\leq i\leq k\},
\end{equation}
and compute the iterates $x_{k+1}$ by:
\begin{equation}
\label{kellymethod}
x_{k+1}\in \Argmin_{x\in X} m_k(x),
\end{equation}
where
\begin{equation}
\label{linearappro}
h(z,x):=f(z)+\left\langle f'(z),x-z\right\rangle .
\end{equation}
Clearly, the functions $m_i$, $i = 1, 2, \ldots$, satisfy
$m_i(x)\leq m_{i+1}(x)\leq f(x)$ for any  $x\in X$, and are identical to $f$ at those search points $x_i$, $i = 1, \ldots, k$.
However, the inaccuracy and instability of the piecewise linear approximation $m_k$
over the whole feasible set $X$ may affect the selection of new iterates, and the above scheme converges slowly both theoretically and practically \cite{nemyud:83,{Nest04}}. Some important improvements of
Kelley's method have been made in 1990s under the name of bundle methods (see, e.g., \cite{Kiw90-1,Kiw95-1,LNN}).
In particular, by incorporating the level sets into Kelley's method,
Lemar\'{e}chal, Nemirovskii and Nesterov~\cite{LNN} proposed in 1995
the classic bundle-level (BL) method by performing a series of projections over
the approximate level sets.

Given $x_1,x_2,\ldots,x_k$, the basic BL iteration consists of the following three steps:
\begin{itemize}
\item [a)] Set $\overline{f}_k:=\min \{f(x_i),1\le i\le k\}$ and compute
a lower bound on $f^*$ by
$
\underline{f}_k = \min_{x\in X} m_k(x).
$
\item [b)] Set the level $l_k = \beta\underline{f}_k + (1-\beta)\overline{f}_k$ for some $\beta\in (0,1)$.
\item [c)] Set $ X_k:=\{x\in X: m_k(x)\le l_k\}$ and determine the new iterate
\beq \label{BLsubp}
x_{k+1} = \argmin_{x \in X_k} \|x - x_k\|^2.
\eeq
\end{itemize}
In the BL method, the localizer $X_k$  is used to
approximate the level set $L_k := \{x: f(x) \le l_k \}$, because
the projection over $L_k$ is often too difficult to compute. Intuitively, as $k$ increases,
the value of $l_k$ will converge to $f^*$, and consequently both $L_k$ and $X_k$ will
converge to the set of optimal solutions for problem \eqnok{eqnProblemX}.
It is shown in \cite{LNN}
the number of BL iterations required to find an $\epsilon$-solution of \eqnok{eqnProblemX},
i.e., a point $\hat x \in X$ s.t. $f(\hat x) - f^* \le \epsilon$, can be bounded by ${\cal O} (1/\epsilon^2)$,
which is optimal for general nonsmooth convex optimization.

Observe that for the above BL methods, the localizer $X_k$ accumulates constraints,
and hence that the subproblem in step c) becomes more and more expensive to solve.  In order
 to overcome this difficulty, some restricted memory BL algorithms have been developed in \cite{Kiw95-1,BenNem05-1}.
In particular, Ben-Tal and Nemirovski \cite{BenNem05-1}  introduced
the non-Euclidean restricted memory level (NERML) method, in which
the number of extra linear constraints in $X_k$ can be as small as $1$ or $2$,
without affecting the optimal iteration complexity. Moreover,
the objective function $\|\cdot\|^2$ in \eqnok{BLsubp} is replaced by a
general Bregman distance $d(\cdot)$ for exploiting the geometry of the feasible set $X$.
NERML is often regarded as
state-of-the-art  for large-scale nonsmooth convex optimization as it
substantially outperforms subgradient type methods in practice. Some more recent development of inexact proximal bundle methods and BL methods could be found in
\cite{van2014constrained,oliveira2011inexact,oliveira2013bundle,richtarik2012approximate,
kiwiel2006proximal,kiwiel2009bundle,DeSa14-1,kiwiel2009inexact}.

\subsection{Accelerated bundle-level type methods}
\label{secABL}
While the classic BL method was optimal for solving nonsmooth CP problems only, Lan~\cite{Lan13-1} recently
significantly generalized this method so that they can optimally solve any black-box CP problems, including non-smooth, smooth and weakly smooth CP problems. In particular, for problem \eqref{eqnProblemX} with compact feasible set $X$, the two new BL methods proposed in \cite{Lan13-1}, i.e., the accelerated bundle-level(ABL) and accelerated prox-level(APL) methods, can solve these problems optimally without requiring any information on problem parameters. The ABL method can be viewed as an accelerated version of the classic BL method. Same as the classic BL method, the lower bound on $f^*$ is estimated from the cutting plane model $m_k$ in \eqref{convexmodel}, the upper bound on $f^*$ is given by the best objective value found so far, and the prox-center is updated by \eqref{BLsubp}. The novelty of the ABL method exists in that three different sequences, i.e, $\{x_k^l\},\{x_k\}$ and $\{x_k^u\}$, are used for updating lower bound, prox-center, and upper bound respectively, which leads to its accelerated iteration complexity for smooth and weakly smooth problems. The APL method is a more practical, restricted memory version of the ABL method,  which can also employ non-Euclidean prox-functions to explore the geometry of the feasible set $X$.

We now provide a brief description of the APL method. This method consists of multiple phases, and each phase calls a gap reduction procedure to reduce the gap between the lower and upper bounds on $f^*$ by a constant factor. More specifically, at phase $s$, given the initial lower bound $\lb_s$ and upper bound $\ub_s$, the APL gap reduction procedure sets $\underline{f}_0=\lb_s, \overline{f}_0=\ub_s, l=\beta \cdot \lb_s+(1-\beta)\ub_s$,
and
iteratively performs the following steps.
\begin{itemize}
\item [a)] Set $x_k^l=(1-\alpha_k)x_{k-1}^u+\alpha_k x_{k-1}$ and $\underline{f}_k:=\max\{\underline{f}_{k-1},\min\{l,\underline{h}_k\}\}$, where
\begin{equation}
\label{APLsub1}
\underline{h}_k:=\min_{x\in X_{k-1}}h(x_k^l,x).
\end{equation}
\item [b)] Update the prox-center $x_k$ by
\begin{equation}
\label{APLsub2}
x_k=\argmin_{x\in \underline{X}_k} d(x),
\end{equation}
where $\underline{X}_k:=\{x\in X_{k-1}: h(x_k^l,x)\le l\}$.
\item [c)] Set $\overline{f}_k=\min\{\overline{f}_{k-1},f(\tilde{x}_k)\}$, where $\tilde{x}_k^u=\alpha_k x_k+(1-\alpha_k)x_{k-1}^u$, and $x_k^u$ is chosen as either $\tilde{x}_k^u$ or $x_{k-1}^u$ such that $f(x_k^u)=\overline{f}_k$.
\item [d)] Choose $X_k$ such that $\underline{X}_k\subseteq X_k\subseteq\overline{X}_k$, where
$
\overline{X}_k:=\{x\in X: \left\langle \nabla d(x_k),x-x_k\right\rangle \ge 0\}.
$
\end{itemize}
Here $\nabla d(\cdot)$ denotes the gradient of $d(\cdot)$. Observe that the parameters $\alpha_k$ and $\beta$ are fixed a priori, and do not depend on any problem parameters. Moreover, the localizer $X_k$ is chosen between $\underline{X}_k$ and $\overline{X}_k$, so that the numbers of linear constraints in the two subproblems \eqref{APLsub1} and \eqref{APLsub2} can be fully controlled. It is shown in \cite{Lan13-1} that for
problem \eqref{eqnProblemX} with compact feasible set $X$, the APL method achieves the optimal iteration complexity
for any smooth, weakly smooth and non-smooth convex functions. Moreover, by  incorporating Nesterov's smoothing technique \cite{Nest05-1} into the APL method,
Lan also presented in \cite{Lan13-1}
the uniform smoothing level (USL) method which can achieve the optimal
complexity for solving an important class of nonsmooth structured saddle point (SP) problems
without requiring the input of any problem parameters
(see Subsection \ref{secFUSL} for more details).

\subsection{Contribution of this paper}
One crucial problem associated with most existing BL type methods, including APL and USL, is that each phase of these algorithms involves solving two optimization problems; first a linear programing problem to compute the lower bound, and then a constrained quadratic programing problem
to update the prox-center or new iterate. 
In fact, the efficiency of these algorithms relies on the solutions of the two involved subproblems \eqref{APLsub1} and \eqref{APLsub2}, and the latter one
is often more complicated than the projection subproblem in the gradient projection type methods.
Moreover, most existing BL type methods require the assumption that the feasible set is
bounded due to the following two reasons. Firstly, the feasible set has to be bounded
to compute a meaningful lower bound by solving the 
aforementioned linear programing problem. 
Secondly, the convergence analysis of limited memory BL type methods (e.g., NERML, APL, and USL)
relies on the assumption that the feasible set is compact. 
These issues have significantly hindered the applicability of existing BL type methods.
Our contribution in this paper mainly consists of the following three aspects.

Firstly, we propose a novel bundle-level framework for unconstrained CP problems. The proposed framework solves unconstrained CP problems through solutions to a series of ball-constrained CPs. In particular, if there exists a uniformly optimal method (e.g., the APL and USL methods) that solves
ball-constrained black-box or structured CPs, then the proposed algorithm solves unconstrained black-box or structured CPs with optimal complexity
without requiring the input of any problem parameters as well. To the best of our knowledge, this is the first time in the literature that the complexity analysis has been performed for BL type methods to solve unconstrained CP problems (see Sections 3.2 and 3.3 in \cite{oliveira2014bundle} for more details).

Secondly, in order to solve ball-constrained CPs, we propose two greatly simplified BL type methods, namely the FAPL and FUSL methods, which achieve the same optimal iteration complexity as the APL and USL methods respectively, and maintain all the nice features of those methods, but has greatly reduced computational cost per iteration.
Such improvement has been obtained by the reduction and simplification of the subproblems that have to be solved in the APL and USL methods.
More specifically, we show that the linear optimization subproblem for computing the lower bound can be eliminated and that the ball constraint can be removed from the quadratic subproblem by properly choosing the prox-functions. We also generalize both FAPL and FUSL methods for solving strongly convex optimization problems and show that they can achieve the optimal iteration complexity bounds.

Thirdly,  we introduce a simple exact approach to solve the only subproblem in our algorithms. As mentioned earlier, the accuracy of the solutions to the subproblems is essential for the efficiency of all these BL type methods mentioned in Sections \ref{secBL} and \ref{secABL}.  By incorporating the proposed exact approach to solve the only subproblem in our algorithm, the accuracy of the FAPL and FUSL methods is significantly increased and the total number of the iterations required to compute an $\epsilon$-solution is greatly decreased comparing with the original APL and USL methods and other first order methods. Also when the number of linear constraints is fixed and small, the computational cost for solving the only subproblem only linearly depends on the dimension of the problem,
since the cost for computing the vector inner product will dominate that for solving a
few auxiliary linear systems. This feature is very important for solving 
large-scale CP problems.

Finally, we present very promising numerical results for these new FAPL and FUSL methods applied to solve large-scale
least square problems and total-variation based image reconstruction problems.
These algorithms significantly outperform other BL type methods, gradient type methods, and even the powerful MATLAB
solver for linear systems, especially when
the dimension and/or the Lipschitz constant of the problem is large.

\vgap

It should be noted that there exist some variants of bundle-level methods \cite{BrKiKrLiPe95-1,DeSa14-1,BeDe13-1} for solving nonsmooth CP problems in which the computation of the subproblem to update the lower bound $\underline{f}_k$ is skipped, so that the feasible set $X$ is allowed to be unbounded. In each iteration, these methods apply a level feasibility check recursively in order to find a proper level $l_k$ and the associated level set $X_k$, and update $\underline{f}_k$ to $l_k$ if the level set associated with $l_k$ is empty. However, in these methods, repeatedly checking the emptiness of level sets associated with varied levels in each iteration may be very costly in practice, especially when the linear constraints in the level sets accumulate. Also, if the feasible set $X=\R^n$, then for any chosen level, the corresponding level set consisting of linear constraints is unlikely to be empty, which would result in the inefficiency for updating the lower bound. In \cite{BeDe13-1}, an alternative for updating the lower bound (or increasing the level) is introduced by comparing the distances from stability center to the newly generated iterate and the solution set. This approach requires some prior knowledge about the distance to the solution set, and a rough estimate for that may lead to incorrect lower bounds and improper choices of levels.

\subsection{Organization of the paper}
The paper is organized as follows. In Section \ref{secCCP}, we present a general scheme to extend the optimal BL type methods for unconstrained
convex optimization. In Section \ref{secF}, the new FAPL and FUSL methods are proposed followed by their convergence analysis, then an exact approach is introduced to solve the subproblem in these algorithms. We also extend the FAPL and FUSL methods to strongly convex CP and structured CP problems in Section \ref{secStronglyconvex}, following
some unpublished developments by Lan in \cite{Lan10-2}. The applications and promising numerical results are presented in Section \ref{secExperiments}.

\setcounter{equation}{0}
\section{Solving unconstrained CP problems through ball-constrained CP}
\label{secCCP}
Our goal in this section is to present a generic algorithmic framework to extend the uniformly optimal constrained BL algorithms in \cite{Lan13-1} for solving unconstrained problems.

Given $\overline{x}\in \R^n, R>0,\epsilon>0$, let us assume that there exists a first-order algorithm, denoted by  $\mathcal{A}(\overline{x},R,\epsilon)$, which can
find an $\epsilon$-solution of
\begin{align}
	\label{eqnCCP}
	f^*_{\overline{x}, R}:=\min_{x\in B(\overline{x},R)}f(x),
\end{align}
where $B(\overline{x},R)$ is defined in \eqref{def_Ball}.
In other words, we assume that each call to $\mathcal{A}(\overline{x}, R,\epsilon)$ will compute a point $z\in B(\overline{x},R)$ such that $f(z)-f^*_{\overline{x},R}\le \epsilon$.
Moreover, throughout this section, we assume that the iteration complexity associated with $\mathcal{A}(\overline{x}, R,\epsilon)$ is given by
\begin{align}
	\label{eqnCCPCompelxity}
	N_{\overline{x},R,\epsilon}:=\frac{C_1(\overline{x}, R, f)R^{\alpha_1}}{\epsilon^{\beta_1}}+\frac{C_2(\overline{x}, R, f)R^{\alpha_2}}{\epsilon^{\beta_2}},
\end{align}
where $\alpha_1\ge\beta_1>0,\alpha_2\ge\beta_2>0$ and $C_1(\overline{x}, R, f), C_2(\overline{x}, R, f)$ are some constants that depend on $f$ in \eqref{eqnCCP}. For example, if $f$ is a smooth convex function, $\nabla f$ is Lipschitz continuous in $\R^n$ with constant $L$, i.e.,  \eqref{smoothrelation} holds with $\rho(\R^n)=1$ and $M(\R^n)=L$, and we apply the APL method to \eqref{eqnCCP}, then we have only one term with $\alpha_1=1$, $\beta_1=1/2$, and $C_1(\overline{x}, R, f)=cL$ in \eqref{eqnCCPCompelxity}, where $c$ is a universal constant. 
Observe that the two complexity terms in \eqnok{eqnCCPCompelxity} will be useful for analyzing 
some structured CP problems in Section~\ref{secFUSL}. It should also be noted that a more accurate estimate of $C_1(\overline{x}, R, f)$ is $cM(B(\overline{x},R))$, since the Lipschitz constant $L=M(\R^n)$ throughout $\R^n$ is larger than or equal to the local Lipschitz constant in $B(\overline{x},R)$. 

By utilizing the aforementioned ball-constrained CP algorithm and a novel guess and check procedure, we present a bundle-level type algorithm for unconstrained convex optimiations as follows.


\begin{algorithm}
\caption{\label{algOracle} Bundle-level type methods for unconstrained CP problems}
Choose initial estimation $r_0\le \|\overline{x}-x^{\ast}\|$ and compute the initial gap $\Delta_0:=f(\overline{x})-\min_{x\in B(\overline{x},r_0)}h(\overline{x},x)$.

For $k=0,1,2,\ldots$,
\begin{algorithmic}
\State  1. Set $\overline{x}_k'=\mathcal{A}(\overline{x}, r_k,\Delta_k)$ and $\overline{x}_k''=\mathcal{A}(\overline{x}, 2r_k,\Delta_k)$.
\State  2. If $f(\overline{x}_k')-f(\overline{x}_k'')> \Delta_k$, update $r_k\leftarrow 2r_k$ and go to step 1\label{algOraclestep2}.
\State  3. Otherwise, let $\overline{x}_k^{\ast}=\overline{x}_k''$, $\Delta_{k+1}={\Delta_k}/{2}$ and $r_{k+1}=r_k$.

\end{algorithmic}
\end{algorithm}
Step 1 and Step 2 in Algorithm \ref{algOracle} constitute a loop to find a pair of solution $(\overline{x}_k',\overline{x}_k'')$ satisfying $0\le f(\overline{x}_k')-f(\overline{x}_k'')\le \Delta_k$. Since $\overline{x}_k'$ and $\overline{x}_k''$ are $\Delta_k$-optimal solutions to $\min_{x\in B(\overline{x},r_k)}f(x)$ and $\min_{x\in B(\overline{x},2r_k)}f(x)$, respectively, this loop must terminate in finite time, because it will terminate whenever $r_k\ge D^*$, where $D^{\ast}$ is defined in \eqref{eqnxstar}.
For simplicity, we call it an expansion if we double the radius in Step 2, and each iteration may contain several expansions before updating the output solution $\overline{x}_k^{\ast}$ in step 3.

Before analyzing the rate of convergence for Algorithm~\ref{algOracle}, we discuss some important observations related to the aforementioned expansions.
\begin{lemma}
\label{unconstrainlemma}
Suppose that $\overline{x}$ is a fixed point and $R>0$ is a fixed constant. Let $\overline{x}_1$ and $\overline{x}_2$ be $\epsilon$-solutions to problems
\begin{equation}
\label{subconstLemma}
f_1^{\ast} :=\min_{x\in B(\overline{x}, R)}f(x)  \ \ \ \mbox{and} \ \ \ f_2^{\ast} :=\min_{x\in B(\overline{x}, 2R)}f(x),
\end{equation}
respectively.
If $0\le f(\overline{x}_1)-f(\overline{x}_2)\le \epsilon$, then we have
\begin{equation}
	\label{eqnUnconstraintLemma}
  f(\overline{x}_2)-f^{\ast}\le \left(3+\frac{2D^{\ast}}{R} \right)\epsilon,
\end{equation}
where $f^*$ and $D^{\ast}$ are defined in \eqref{oriproblem} and \eqref{eqnxstar} respectively.
\end{lemma}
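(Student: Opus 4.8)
The plan is to separate the two sources of suboptimality in $f(\overline{x}_2)-f^{\ast}$. Writing $f(\overline{x}_2)-f^{\ast}=\bigl(f(\overline{x}_2)-f_2^{\ast}\bigr)+\bigl(f_2^{\ast}-f^{\ast}\bigr)$ and using that $\overline{x}_2$ is an $\epsilon$-solution of the $2R$-ball problem in \eqref{subconstLemma}, the first bracket is at most $\epsilon$, so it remains to bound the genuine gap $f_2^{\ast}-f^{\ast}$ between the minimum over $B(\overline{x},2R)$ and the global minimum. The first step is to extract from the hypotheses the single scalar inequality that drives everything: since $\overline{x}_1\in B(\overline{x},R)$ we have $f_1^{\ast}\le f(\overline{x}_1)$, and combining $f(\overline{x}_1)\le f(\overline{x}_2)+\epsilon$ with $f(\overline{x}_2)\le f_2^{\ast}+\epsilon$ gives $f_1^{\ast}\le f_2^{\ast}+2\epsilon$, i.e. $0\le f_1^{\ast}-f_2^{\ast}\le 2\epsilon$. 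Intuitively this says that doubling the radius from $R$ to $2R$ decreases the constrained optimal value by at most $2\epsilon$, and by convexity such \emph{diminishing returns} can only occur when $f_2^{\ast}$ is already close to $f^{\ast}$.

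Next I would dispose of the easy case. If $D^{\ast}\le 2R$ then $x^{\ast}\in B(\overline{x},2R)$, so $f_2^{\ast}=f^{\ast}$ and $f(\overline{x}_2)-f^{\ast}\le\epsilon$, which is well below the claimed bound. So assume $D^{\ast}>2R$, the regime in which the global minimizer lies strictly outside the $2R$-ball.

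The heart of the argument is a single convexity estimate. Let $z_1$ be a minimizer of $f$ over $B(\overline{x},R)$ (which exists since the finite convex function $f$ is continuous and $B(\overline{x},R)$ is compact), so $\|z_1-\overline{x}\|\le R$ and $f(z_1)=f_1^{\ast}$. Set $t_0:=R/(D^{\ast}-R)\in(0,1)$ and $q:=(1-t_0)z_1+t_0 x^{\ast}$. The point $q$ is engineered so that, by the triangle inequality, $\|q-\overline{x}\|\le(1-t_0)\|z_1-\overline{x}\|+t_0\|x^{\ast}-\overline{x}\|\le(1-t_0)R+t_0 D^{\ast}=2R$; hence $q\in B(\overline{x},2R)$ and $f_2^{\ast}\le f(q)$. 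Convexity of $f$ then yields $f(q)\le(1-t_0)f_1^{\ast}+t_0 f^{\ast}$, so $f_2^{\ast}\le(1-t_0)f_1^{\ast}+t_0 f^{\ast}$, which rearranges to $t_0\bigl(f_2^{\ast}-f^{\ast}\bigr)\le(1-t_0)\bigl(f_1^{\ast}-f_2^{\ast}\bigr)$. Substituting $f_1^{\ast}-f_2^{\ast}\le 2\epsilon$ and the identities $t_0=R/(D^{\ast}-R)$, $1-t_0=(D^{\ast}-2R)/(D^{\ast}-R)$ gives $f_2^{\ast}-f^{\ast}\le\frac{2(D^{\ast}-2R)}{R}\,\epsilon\le\frac{2D^{\ast}}{R}\,\epsilon$. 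Combined with the first paragraph, $f(\overline{x}_2)-f^{\ast}\le\bigl(1+\tfrac{2D^{\ast}}{R}\bigr)\epsilon$, which in fact is sharper than the stated $\bigl(3+\tfrac{2D^{\ast}}{R}\bigr)\epsilon$.

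I expect the main obstacle to be choosing the right test point. The naive idea of taking the two points where the segment $[\overline{x},x^{\ast}]$ meets the spheres of radii $R$ and $2R$ fails, because the constrained minimizer over $B(\overline{x},R)$ need not lie on that segment, so its value cannot be tied to $f_1^{\ast}$. The key realization is to anchor the convex combination at the \emph{exact} small-ball minimizer $z_1$ and aim toward $x^{\ast}$, and then to calibrate the mixing weight $t_0$ precisely so that $q$ lands on the sphere of radius $2R$; this is exactly what lets both $f_1^{\ast}$ (through $z_1$) and the controlled quantity $f_1^{\ast}-f_2^{\ast}$ enter the estimate while keeping $q$ feasible. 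It is worth noting that anchoring instead at the $\epsilon$-solution $\overline{x}_1$ would leak an extra factor into the coefficient of $\epsilon$ and fail to reach the stated bound for large $D^{\ast}/R$, so the use of the true minimizer $z_1$ is the one genuinely delicate point.
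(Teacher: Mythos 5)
Your proof is correct and follows essentially the same route as the paper's: both hinge on a convexity estimate along the segment from the exact minimizer over the small ball toward $x^{\ast}$, using a point calibrated to land in $B(\overline{x},2R)$ together with the observation that $f_1^{\ast}-f_2^{\ast}\le 2\epsilon$. Your final bookkeeping (bounding $f_2^{\ast}-f^{\ast}$ and passing through $f(\overline{x}_2)$ directly, rather than bounding $f(x_1^{\ast})-f^{\ast}$ and passing through $f(\overline{x}_1)$ as the paper does) even yields the slightly sharper constant $\left(1+\frac{2D^{\ast}}{R}\right)\epsilon$.
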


\begin{proof}
Clearly, by definition, we have
$\|\overline{x}_1-\overline{x}\|\le R$, $\|\overline{x}_2-\overline{x}\|\le 2R$, $0\le f(\overline{x}_1)-f_1^{\ast}\le \epsilon$,
and
$0\le f(\overline{x}_2)-f_2^{\ast}\le \epsilon.$
It suffices to consider the case when $f_2^{\ast}> f^{\ast}$ and $\|x^*-\overline{x}\|>2R$, since otherwise \eqref{eqnUnconstraintLemma} holds trivially.
Suppose $x_1^{\ast}$ and $x_2^{\ast}$ are the solutions to the first and second problems in \eqref{subconstLemma} respectively,  let $\hat{x}$ be the intersection of the line segment $(x^{\ast},x_1^{\ast})$ with the ball $B(\overline{x},2R)$, and denote $R_1:=\|\hat{x}-x_1^{\ast}\|$ and $R_2:=\|x^{\ast}-x_1^{\ast}\|$. Clearly, $\hat{x}=(1-\frac{R_1}{R_2})x_1^{\ast}+\frac{R_1}{R_2}x^{\ast}$. By the convexity of $f(\cdot)$, we have
\begin{equation}\label{reluncons}
  f(\hat{x})\le (1-\frac{R_1}{R_2})f(x_1^{\ast})+\frac{R_1}{R_2}f(x^{\ast}),
\end{equation}
which implies that
\begin{equation}\label{relunconsconvex}
\frac{R_1}{R_2}[f(x_1^{\ast})-f(x^{\ast})]\le f(x_1^{\ast})-f(\hat{x}),
\end{equation}
and that $f(\hat{x})\le f(x_1^{\ast})$ due to the fact that $f(x^{\ast})\le f(x_1^{\ast})$.
Also, we have $f(\hat{x})\ge f(x_2^{\ast})$ since $\hat{x} \in B(\overline{x}, 2R)$. In addition,
\begin{align}
  f(x_1^{\ast})-f(x_2^{\ast})&=[f(x_1^{\ast})-f(\overline{x}_1)]+[f(\overline{x}_1)-f(\overline{x}_2)]+[f(\overline{x}_2)-f(x_2^{\ast})]\\
                             &\le 0+\epsilon+\epsilon=2\epsilon.
\end{align}
Combining the previous inequalities, we obtain
\begin{equation}
  f(x_1^{\ast})-2\epsilon \le f(x_2^{\ast})\le f(\hat{x})\le f(x_1^{\ast}),
\end{equation}
which implies that $f(x_1^{\ast})-f(\hat{x})\le 2\epsilon$. Using the previous conclusion \eqref{relunconsconvex}, and the fact that $R_1\ge R$ and $R_2\le D^{\ast}+R$, we have
$$f(x_1^{\ast})-f(x^{\ast})\le \frac{2\epsilon R_2}{R_1}\le\left(2+\frac{2D^{\ast}}{R}\right)\epsilon. $$\
Therefore,
$$f(\overline{x}_2)-f(x^{\ast})\le f(\overline{x}_1)-f(x^{\ast})\le [f(\overline{x}_1)-f(x_1^{\ast})]+[f(x_1^{\ast})-f(x^{\ast})]\le \left(3+\frac{2D^{\ast}}{R} \right)\epsilon.$$
\end{proof}

\vgap
We are now ready to prove the iteration complexity of Algorithm \ref{algOracle} for solving the unconstrained CP problem in \eqref{oriproblem}.

\begin{theorem}
\label{unconstraintheorem}
Suppose that the number of evaluations of $f'$ in one call to $\mathcal{A}(\overline{x}, R,\epsilon)$ is bounded by \eqref{eqnCCPCompelxity},
and denote $\epsilon_k:=f(\overline{x}_k^{\ast})-f^{\ast}$
for the iterates $\{\overline{x}_k^{\ast}\}$ of Algorithm \ref{algOracle}.
Then we have
\begin{enumerate}[a)]
	\item $r_k< 2D^{\ast}$ for all $k$;
	\item $\lim_{k\to \infty}\epsilon_k=0$;
	\item The total number of evaluations of $f'$ performed by Algorithm \ref{algOracle}
	up to the $k$-th iteration
	is bounded by
\begin{equation}
\label{iterboundofunconst}
	\cO\left(\frac{C_1(\overline{x}, 4D^{\ast}, f) (D^*)^{\alpha_1}}{\epsilon_k^{\beta_1}}+\frac{C_2(\overline{x}, 4D^{\ast}, f) (D^*)^{\alpha_2}}{\epsilon_k^{\beta_2}}\right),
\end{equation}
where $D^{\ast}$ is defined in \eqref{eqnxstar}.
\end{enumerate}
\end{theorem}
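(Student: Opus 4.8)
The plan is to prove the three parts in order, using Lemma~\ref{unconstrainlemma} for parts (b) and (c) and a simple monotonicity argument for part (a).

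For part (a), the key observation is that an \emph{expansion} (the doubling in Step~2) can only be triggered from a radius strictly smaller than $D^\ast$. Indeed, if $r_k\ge D^\ast$ then $x^\ast\in B(\overline{x},r_k)\subseteq B(\overline{x},2r_k)$, so both ball-constrained problems in Step~1 share the optimal value $f^\ast$; since $\overline{x}_k'$ is a $\Delta_k$-solution of the smaller problem and $\overline{x}_k''$ is feasible for the larger one, $f(\overline{x}_k')-f(\overline{x}_k'')\le(f^\ast+\Delta_k)-f^\ast=\Delta_k$ and the test in Step~2 fails. Because the radius starts at $r_0\le D^\ast$, is nondecreasing, and never doubles from a value $\ge D^\ast$, every radius produced satisfies $r_k<2D^\ast$; I would make this precise by an induction over the expansions.

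For part (b), I would apply Lemma~\ref{unconstrainlemma} to the terminating pair $(\overline{x}_k',\overline{x}_k'')$ with $R=r_k$ and $\epsilon=\Delta_k$. The termination condition of Step~2 is exactly the hypothesis $0\le f(\overline{x}_k')-f(\overline{x}_k'')\le\Delta_k$, so \eqref{eqnUnconstraintLemma} gives $\epsilon_k=f(\overline{x}_k'')-f^\ast\le(3+2D^\ast/r_k)\Delta_k\le(3+2D^\ast/r_0)\Delta_k$, using $r_k\ge r_0$. Since Step~3 halves the gap at every iteration, $\Delta_k=\Delta_0/2^k\to 0$, whence $\epsilon_k\to 0$.

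Part (c) is the main obstacle, since the count must include all calls to $\mathcal{A}$ made during expansions, and an expansion may be postponed to a late iteration where $\Delta_j$ is small and each call is expensive. I would control this through two facts. First, by part (a) every ball queried has radius at most $2r_k<4D^\ast$, hence lies in $B(\overline{x},4D^\ast)$; assuming (as the remark following \eqref{eqnCCPCompelxity} justifies through $C_i\approx cM(B(\overline{x},\cdot))$) that $C_i(\overline{x},R,f)$ is nondecreasing in $R$, each call at iteration $j$ costs at most $N_j^{\max}:=C_1(\overline{x},4D^\ast,f)(4D^\ast)^{\alpha_1}\Delta_j^{-\beta_1}+C_2(\overline{x},4D^\ast,f)(4D^\ast)^{\alpha_2}\Delta_j^{-\beta_2}$. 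Second, since the radius is monotone and bounded by $2D^\ast$ while each expansion doubles it, the total number of expansions over iterations $0,\dots,k$ is at most $\log_2(2D^\ast/r_0)$, a constant independent of $k$. Writing $m_j$ for the number of expansions at iteration $j$, the total work is at most $\sum_{j=0}^{k}2(m_j+1)N_j^{\max}$. The ``$+1$'' terms sum, via the geometric series $\sum_{j=0}^{k}\Delta_j^{-\beta_i}=\Delta_0^{-\beta_i}\sum_{j=0}^{k}2^{j\beta_i}=\cO(\Delta_k^{-\beta_i})$ (dominated by its last term since $\beta_i>0$), to $\cO(N_k^{\max})$; the $m_j$ terms are bounded by $N_k^{\max}\sum_j m_j=\cO(N_k^{\max})$, because $N_j^{\max}\le N_k^{\max}$ for $j\le k$ and $\sum_j m_j$ is constant. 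Hence the total is $\cO(N_k^{\max})$. Finally I would convert $\Delta_k$ to $\epsilon_k$ using part (b): $\Delta_k\ge\epsilon_k/(3+2D^\ast/r_0)$ yields $\Delta_k^{-\beta_i}=\cO(\epsilon_k^{-\beta_i})$, and absorbing the constants $4^{\alpha_i}$, $(3+2D^\ast/r_0)^{\beta_i}$ and $\log_2(2D^\ast/r_0)$ into $\cO(\cdot)$ produces exactly \eqref{iterboundofunconst}.
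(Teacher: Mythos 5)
Your proposal is correct, and parts (a) and (b) are essentially identical to the paper's argument (expansion can only be triggered when $x^{\ast}\notin B(\overline{x},r_k)$, hence from a radius below $D^{\ast}$; then Lemma~\ref{unconstrainlemma} plus $\Delta_k\to 0$). The genuine divergence is in part (c). The paper treats the entire history of Step-1 executions as one sequence $N_1,\dots,N_K$ and observes that between consecutive executions either the radius doubles or $\Delta$ halves, so each complexity term grows by a factor of at least $2^{\beta_i}$; the whole sum is therefore dominated by a geometric series ending at the final execution, and the conversion to $\epsilon_k$ is done through the quantity $r_k^{\alpha_i}(3+2D^{\ast}/r_k)^{\beta_i}=r_k^{\alpha_i-\beta_i}(3r_k+2D^{\ast})^{\beta_i}$, which is monotone in $r_k$ (this is where $\alpha_i\ge\beta_i$ is used) and is evaluated at $r_k<2D^{\ast}$ to give a constant depending only on $\alpha_i,\beta_i$. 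You instead decompose the work into one guaranteed execution per iteration, summed as a geometric series in $\Delta_j$, plus at most $\log_2(2D^{\ast}/r_0)$ expansion executions each bounded crudely by the final per-call cost. Your decomposition is arguably more transparent, but it is also where you pay: both the expansion count and your conversion $\Delta_k\ge\epsilon_k/(3+2D^{\ast}/r_0)$ inject factors that grow with $D^{\ast}/r_0$, so the constant hidden in your $\cO(\cdot)$ depends on the initial guess $r_0$ and can be made arbitrarily large by choosing $r_0$ small, whereas the paper's constant does not. If you want to recover the cleaner constant within your scheme, keep $r_k$ rather than $r_0$ in the conversion and bound the radii at iteration $j$ by $2r_j$ rather than $4D^{\ast}$, so that the small factor $r_k^{\alpha_i}$ offsets the large factor $(3+2D^{\ast}/r_k)^{\beta_i}$ exactly as in the paper. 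Two further remarks: your explicit assumption that $C_i(\overline{x},R,f)$ is nondecreasing in $R$ is also used silently by the paper when it passes from $C_i(\overline{x},2r_k,f)$ to $C_i(\overline{x},4D^{\ast},f)$, so no loss there; and your reading of the Step-2 termination test as supplying the hypothesis $0\le f(\overline{x}_k')-f(\overline{x}_k'')\le\Delta_k$ of Lemma~\ref{unconstrainlemma} matches the paper's own description of the loop, although strictly speaking the algorithm only enforces the upper inequality.
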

\begin{proof}
We start by proving that $r_k<  2D^{\ast}$ for all $k$. From the description of Algorithm \ref{algOracle}, we see that expansions occur if and only if $f(\overline{x}_k')-f(\overline{x}_k'')>\Delta_k$ at Step \ref{algOraclestep2}. Moreover, we can observe that $f(\overline{x}_k')-f(\overline{x}_k'')\le \Delta_k$ if $x^{\ast}\in B(\overline{x},r_k)$. This observation implies that $r_k<  2D^{\ast}$. Indeed, it is easy to see that the total number of expansions is bounded by
\begin{equation}
  \overline{S}_1:=\left\lceil\log_2{\frac{D^{\ast}}{r_0}}\right\rceil+1.
\end{equation}

To prove b), noting from the description of Step 3 and Lemma \ref{unconstrainlemma} that
\begin{equation}
\label{unconstrainedrelation}
  \epsilon_k=f(\overline{x}_k'')-f^{\ast}\le \left(3+\frac{2D^{\ast}}{r_k} \right)\Delta_k.
\end{equation}
Since the total number of expansions is bounded by $\overline{S}_1$, and $\Delta_k$ decreases to $0$
as $k$ increases, we have $\lim_{k\to \infty}\epsilon_k= 0$.

To prove c), assume that the number of executions of Step 1 performed by $\mathcal{A}$ to find $\overline{x}_k^{\ast}$ is $K$. Now we can estimate the total number of evaluations of $f'$ performed by the $K$ executions. For any $0\le j<K$, as $2^{\alpha_1}\ge2^{\beta_1}>1$ and $2^{\alpha_2}\ge2^{\beta_2}>1$, we have
\begin{equation}
N_{j+1}'\ge2^{\beta_1}N_j'
\text{\ and\ } N_{j+1}''\ge 2^{\beta_2}N_j''
\end{equation}
by the assumption of $N_{\overline{x},R,\epsilon}$, where $N_j=N_j'+N_j''$ denotes the bound on iteration number for the $j^{th}$ execution, and $N_j',N_j''$ correspond with the first and second terms in \eqref{iterboundofunconst} respectively.  Then the total number of iterations is bounded by
\begin{align}
  N:&=\sum_{j=1}^K (N_j'+N_j'') \le N_K'\sum_{j=0}^{K-1} (2^{\beta_1})^{-j}+N_K''\sum_{j=0}^{K-1} (2^{\beta_2})^{-j}\\
    &<  N_K'\sum_{j=0}^{+\infty} 2^{-\beta_1 j}+ N_K''\sum_{j=0}^{+\infty} 2^{-\beta_2 j}\le \frac{1}{1-2^{-\beta_1}}N_K'+\frac{1}{1-2^{-\beta_2}}N_K''\\
    &\le \frac{(1+2^{\alpha_1})C_1(\overline{x}, 2r_k, f)}{1-2^{-\beta_1} }\cdot \frac{r_k^{\alpha_1}}{\Delta_k^{\beta_1}}+ \frac{(1+2^{\alpha_2})C_2(\overline{x}, 2r_k, f)}{1-2^{-\beta_2} }\cdot \frac{r_k^{\alpha_2}}{\Delta_k^{\beta_2}}.
\end{align}

Combining the above inequality with \eqref{unconstrainedrelation}, we have
\begin{equation}
N<\sum_{i=1}^2\frac{(1+2^{\alpha_i})C_i(\overline{x}, 2r_k, f)}{1-2^{-\beta_i}}\cdot
    \frac{r_k^{\alpha_i}(3+\frac{2D^{\ast}}{r_k})^{\beta_i}}{\epsilon_k^{\beta_i}}.
\end{equation}
Since $\alpha_i\ge\beta_i>0$, then $r_k^{\alpha_i}(3+\frac{2D^{\ast}}{r_K})^{\beta_i}=r_k^{\alpha_i-\beta_i}(3r_k+2D^{\ast})^{\beta_i}$ for $i=1,2$ is monotonically increasing with respect to $r_k$, which, in view of the fact $r_k<  2D^{\ast}$ for any $k\ge 0$, then clearly implies

\begin{equation}
  N< \sum_{i=1}^2 \frac{(1+2^{\alpha_i})2^{\alpha_i+3\beta_i}C_i(\overline{x},4D^{\ast},f)}{2^{\beta_i}-1}\cdot
    \frac{ (D^{\ast})^{\alpha_i}}{\epsilon_k^{\beta_i}}.
\end{equation}
Hence the proof is complete.
\end{proof}

Note that to solve the unconstrained black-box CP problem \eqref{oriproblem}, the termination criterions of most first-order algorithms are based on the residual of the gradient or gradient mapping, which would lead to different complexity analysis. To the best of our knowledge, without any prior information on $D^{\ast}$, there is no any termination criterion based on functional optimality gap that could guarantee the termination of algorithms for finding an $\epsilon$-solution of \eqref{oriproblem}. Comparing to Nesterov's optimal gradient method for unconstrained problems in \cite{Nest83-1}, Algorithm \ref{algOracle} only provides efficiency estimates about $\epsilon_k:=f(\overline{x}^{\ast}_k)-f^{\ast}$ when the output $\overline{x}^{\ast}_k$ is updated, while the optimal gradient method could have estimates about $\overline{\epsilon}_k:=f(x_k)-f^{\ast}$ for each iterate $x_k$. For both methods the efficiency estimates involve $D^{\ast}$.  Since Algorithm \ref{algOracle} extend methods for ball-constraint CP problems to solve \eqref{oriproblem}, and the iterations in the expansions of Algorithm \ref{algOracle} could be regarded as a guess and check procedure to determine $D^{\ast}$, it is reasonable that the efficiency estimates are only provided for unexpansive steps which update $\overline{x}^{\ast}_k$.

Since in \cite{Lan13-1} the APL method, and its variant the USL method, have optimal iteration complexity for solving smooth, nonsmooth, weakly smooth CP problems and structured nonsmooth CP problems on compact feasible sets, Algorithm \ref{algOracle} could be incorporated to solve \eqref{oriproblem} by specifying feasible sets to a sequences of Euclidean balls. Therefore, the main problem remained is how to improve the efficiency of these BL type methods for solving ball-constrained CP problems.

\setcounter{equation}{0}
\section{Fast prox-level type methods for ball-constrained and unconstrained problems} \label{secF}

This section contains four subsections. We first present a much simplified APL method, referred to the
fast APL (FAPL) method, for solving ball-constrained black-box CP problems in Subsection~\ref{secFAPL}, and
then present the fast USL (FUSL) method for solving a special class of ball-constrained structured CP problems
in Subsection~\ref{secFUSL}. We
show how to solve the subproblems in these two algorithms in Subsection~\ref{secKKT}.
We also briefly discuss in Subsection~\ref{secunconstraint} the applications of the FAPL and FUSL methods for unconstrained
optimization, based on our results in Section~\ref{secCCP}. For the sake of simplicity, throughout this section, we denote $\rho=\rho(B(\overline{x},R)), M=M(B(\overline{x},R))$ for \eqref{eqnCCP}.



\subsection{FAPL for ball-constrained black-box problems}
\label{secFAPL}
Our goal in this subsection is to present the FAPL method, which can significantly reduce the iteration
cost  for the APL method applied to problem \eqnok{eqnCCP}.
In particular, we show that only one subproblem, rather than two subproblems (see \eqref{APLsub1} and \eqref{APLsub2})
as in the APL method, is required in the FAPL method for defining a new iterate (or prox-center) and updating lower bound. We also demonstrate that the ball constraint in \eqnok{eqnCCP} can be eliminated from the subproblem by properly specifying the
prox-function.

Similarly to the APL method, the FAPL method consists of multiple phases, and in each phase the FAPL gap reduction procedure,
denoted by $\mathcal{G}_{FAPL}$, is called to reduce the gap between the upper and lower bounds on $f^\ast_{\overline{x}, R}$ in \eqref{eqnCCP}  by a constant factor.

We start by describing the FAPL gap reduction procedure in Procedure \ref{funFAPL}. This procedure differs from the
gap reduction procedure used in the APL method in the following several aspects.
Firstly, the feasible sets $\underline{Q}_k$ and $\overline{Q}_k$ (see steps \ref{stepGFAPLcut} and \ref{stepGFAPLQ}) in procedure $\mathcal{G}_{FAPL}$
only contain linear constraints
and hence are possibly unbounded, while the localizers in the APL method must be compact.
Secondly, we eliminate the subproblem that updates the lower bound on $f^*$ in the APL method. Instead, in the FAPL method, the lower bound
is updated to $l$ directly whenever $\underline{Q}_k = \emptyset$ or $\|x_k-\overline{x}\|>R$.
Thirdly, we choose a specific prox-function $d(x)=\frac{1}{2}\|x-\overline{x}\|^2$, and the prox-center is fixed to be $\overline{x}$.
As a result, all the three sequences $\{x_k\},\{x_k^l\}$ and $\{x_k^u\}$ will reside in the ball $B(\overline{x},R)$.
At last, as we will show in next subsection, since the subproblem \eqref{FAPLsub} only contains a limited number of linear constraints (depth of memory),
we can solve it very efficiently, or even exactly if the depth of memory is small, say, $\le 10$.

\begin{procedure}
\caption{\label{funFAPL}The FAPL gap reduction procedure: $(x^+,
\lb^+)=\mathcal{G}_{FAPL}(\hat{x}, \lb, R,\overline{x},\beta,\theta)$}
\begin{algorithmic}[1]
		\makeatletter
		\setcounter{ALG@line}{-1}
		\makeatother
\State \label{stepGFAPLinit}
Set $k=1$, $\overline{f}_0=f(\hat{x}), l=\beta \cdot \lb+(1-\beta)\overline{f}_0$, $Q_0=\R^n$, and $x_0^u=\hat{x}$.
Let $x_0 \in B(\overline x, R)$ be given arbitrarily.

\State \label{stepGFAPLcut}
Update the cutting plane model:
\begin{align}
x_k^l&=(1-\alpha_k)x_{k-1}^u+\alpha_k x_{k-1},\label{def_xkl} \\
h(x_k^l,x)&=f(x_k^l)+\left\langle f'(x_k^l),x-x_k^l \right\rangle , \label{def_hk}\\
\underline{Q}_k&=\{x\in Q_{k-1}:h(x_k^l,x)\le l \}\label{DEFQ_kLow}.
\end{align}
\State \label{stepGFAPLp}
Update the prox-center and lower bound:
\begin{equation}
\label{FAPLsub}
x_k=\argmin_{x\in \underline{Q}_k}\left\{ d(x):= \frac{1}{2}\|x-\overline{x}\|^2\right\}.
\end{equation}
If $\underline{Q}_k=\emptyset$ or $\|x_k-\overline{x}\|> R$, then \textbf{terminate} with outputs $x^+=x_{k-1}^u,\lb^+=l $.
\State \label{stepGFAPLub}
Update the upper bound: set
\begin{align}
\tilde{x}_k^u&=(1-\alpha_k)x_{k-1}^u + \alpha_k x_k,\label{eqnxku}\\
	x_k^u&=\begin{cases}
		\tilde{x}_k^u, &\text{if\ } f(\tilde{x}_k^u)< \overline{f}_k, \\
		 x_{k-1}^u, & \text{otherwise},
		  \end{cases} \label{eqnxku1}
\end{align}
and $\overline{f}_k=f(x_k^u)$. If $\overline{f}_k\le l+\theta(\overline{f}_0-l)$, then \textbf{terminate} with $x^+=x_{k}^u,\lb^+=\lb$.
\State \label{stepGFAPLQ}
Choose any polyhedral set $Q_k$ satisfying $\underline{Q}_k\subseteq Q_k\subseteq \overline{Q}_k$, where
\begin{equation}
\overline{Q}_k:=\{x\in \mathbb{R}^n:\left\langle x_k-\overline{x}, x-x_k\right\rangle \ge 0\}.\label{DEFQ_KUp}
\end{equation}
\ \ \ Set $k=k+1$ and go to step \ref{stepGFAPLcut}.
\end{algorithmic}
\end{procedure}

We now add a few more remarks about the technical details of Procedure \ref{funFAPL}. Firstly, Procedure \ref{funFAPL} is
terminated at step 2 if $Q_k=\emptyset$ or $\|x_k-\overline{x}\|> R$, which can be checked automatically when solving the
subproblem \eqref{FAPLsub} (see Subsection \ref{secKKT} for more details).
Secondly, in step \ref{stepGFAPLQ}, while $Q_k$ can be any polyhedral set between $\underline{Q}_k$ and $\overline{Q}_k$, in practice we can simply choose $Q_k$ to be the intersection of the half-space $\{x\in \R^n: \left \langle x_k-\overline{x},x-x_k\right \rangle \ge 0\}$ and a few most recently generated half-spaces, each of which is defined by $\{x\in \mathbb{R}^n:h(x_\tau^l,x)\le l\}$ for some $1\le \tau\le k$.
Finally, in order to guarantee the termination of procedure $\mathcal{G}_{FAPL}$ and the optimal iteration complexity, the parameters $\{\alpha_k\}$ used in this procedure need to be properly chosen. One set of conditions that $\{\alpha_k\}$ should satisfy
to guarantee the convergence of procedure $\mathcal{G}_{FAPL}$  is readily given in \cite{Lan13-1}:
\begin{equation}
\label{stepsize}
\alpha_1=1,\ 0<\alpha_k\le 1,\ \gamma_k\|\tau_k(\rho)\|_{\frac{2}{1-\rho}}\le ck^{-\frac{1+3\rho}{2}},\ \forall k\ge 1
\end{equation}
for some constants $c>0$, where $\|\cdot\|_p$ denotes the $l_p$ norm,
\begin{equation}
\gamma_k:=
 \begin{cases}
 1, & k=1, \\
 \gamma_{k-1}(1-\alpha_k), & k\ge 2,
 \end{cases}\ \ \text{and\ \ }    \tau_k(\rho):=\left\{\frac{\alpha_1^{1+\rho}}{\gamma_1},\frac{\alpha_2^{1+\rho}}{\gamma_2},\ldots,\frac{\alpha_k^{1+\rho}}{\gamma_k} \right\}.
\end{equation}

The following lemma, whose proof is given in \cite{Lan13-1}, provides two examples for the selection of $\{\alpha_k\}$.
\begin{lemma}
\begin{itemize}
\item [a)] If
$\alpha_k={2}/(k+1)$,
$k=1,2,\ldots$, then the condition \eqref{stepsize} is satisfied with $c=2^{1+\rho}3^{-\frac{1-\rho}{2}}$.
\item [b)]  If $\{\alpha_k\}$ is recursively defined by
\begin{equation}
\alpha_1=\gamma_1=1, \alpha_k^2=(1-\alpha_k)\gamma_{k-1}=\gamma_k, \forall k\ge 2,
\end{equation}
 then the condition \eqref{stepsize} holds with $c=4/3^{\frac{1-\rho}{2}}$.
\end{itemize}
\end{lemma}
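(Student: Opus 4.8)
The plan is to verify each part by reducing condition \eqref{stepsize} to an explicit estimate on $\gamma_k\|\tau_k(\rho)\|_{2/(1-\rho)}$; the trivial requirements $\alpha_1=1$ and $0<\alpha_k\le 1$ are immediate in both cases. Writing $p=2/(1-\rho)$ and $b_i:=\alpha_i^{1+\rho}/\gamma_i$, so that $\|\tau_k(\rho)\|_p=(\sum_{i=1}^k b_i^{\,2/(1-\rho)})^{(1-\rho)/2}$, the whole argument comes down to controlling the sequence $\{b_i\}$ together with $\gamma_k$. Throughout I would treat $\rho<1$ (so $p<\infty$) and handle the endpoint $\rho=1$ separately.

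For part a) I would first solve the recursion $\gamma_k=\gamma_{k-1}(1-\alpha_k)=\gamma_{k-1}\frac{k-1}{k+1}$ in closed form; this telescopes to $\gamma_k=\frac{2}{k(k+1)}$. Substituting $\alpha_i=2/(i+1)$ then gives $b_i=2^\rho\, i\,(i+1)^{-\rho}\le 2^\rho i^{1-\rho}$, hence $b_i^{2/(1-\rho)}\le 2^{2\rho/(1-\rho)} i^2$ and $\|\tau_k(\rho)\|_p\le 2^\rho(\sum_{i=1}^k i^2)^{(1-\rho)/2}$. The point is to estimate the sum of squares \emph{without losing the constant}: using $\sum_{i=1}^k i^2=\frac{k(k+1)(2k+1)}{6}<\frac{k(k+1)^2}{3}$ produces the factor $3^{-(1-\rho)/2}$, and after multiplying by $\gamma_k=\frac{2}{k(k+1)}$ one collapses the remaining powers, obtaining $2^{1+\rho}3^{-(1-\rho)/2}\,k^{-(1+\rho)/2}(k+1)^{-\rho}$ and then discharging the leftover factor with the elementary inequality $(k+1)^{-\rho}\le k^{-\rho}$ to land exactly on $2^{1+\rho}3^{-(1-\rho)/2}k^{-(1+3\rho)/2}$.

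For part b) the defining relation $\alpha_k^2=\gamma_k$ is what simplifies everything: it gives $b_i=\alpha_i^{1+\rho}/\alpha_i^2=\alpha_i^{-(1-\rho)}$, so that $b_i^{2/(1-\rho)}=\alpha_i^{-2}=\gamma_i^{-1}$ and the inner sum is simply $\sum_{i=1}^k\gamma_i^{-1}$. Thus I only need two-sided control of $\gamma_k$, which I would obtain from a telescoping estimate: using $\gamma_k=(1-\sqrt{\gamma_k})\gamma_{k-1}$ and the identity $\frac{1}{\sqrt{\gamma_k}}-\frac{1}{\sqrt{\gamma_{k-1}}}=\frac{1}{1+\sqrt{1-\sqrt{\gamma_k}}}$, whose value lies in $[\tfrac12,1]$, summation from $\gamma_1=1$ yields $\frac{k+1}{2}\le \gamma_k^{-1/2}\le k$, i.e. $\frac{1}{k^2}\le\gamma_k\le\frac{4}{(k+1)^2}$. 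The lower bound gives $\gamma_i^{-1}\le i^2$, so the same sum-of-squares estimate applies; combining it with the upper bound $\gamma_k\le 4/(k+1)^2$ and $(k+1)^{-(1+\rho)}\le k^{-(1+\rho)}$ yields $4\cdot 3^{-(1-\rho)/2}k^{-(1+3\rho)/2}$, as claimed.

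The main obstacle is purely the exponent bookkeeping: the constants $2^{1+\rho}3^{-(1-\rho)/2}$ and $4\cdot3^{-(1-\rho)/2}$ are sharp enough that the crude bound $\sum_{i=1}^k i^2\le k^3$ is too lossy (it drops the $3^{-(1-\rho)/2}$), so one must carry the factorization $\frac{k(k+1)^2}{3}$ and then discharge every surviving $(k+1)$ power against a $k$ power of the same (nonpositive) exponent. A minor secondary point is the degenerate endpoint $\rho=1$, where $p=\infty$ and $\|\tau_k(\rho)\|_p$ reduces to $\max_i b_i$; there the estimate should be checked directly (it reduces to $\gamma_k\max_i b_i\le 4k^{-2}$ in both parts), or recovered by continuity in $\rho$.
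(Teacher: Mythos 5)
Your proof is correct. Note that the paper itself does not prove this lemma --- it simply states that the proof is given in \cite{Lan13-1} --- so there is no in-paper argument to compare against; your computation is a correct self-contained reconstruction of the standard one from that reference: the closed form $\gamma_k=\tfrac{2}{k(k+1)}$ in part a), the reduction $b_i^{2/(1-\rho)}=\gamma_i^{-1}$ together with the telescoping bounds $\tfrac{k+1}{2}\le\gamma_k^{-1/2}\le k$ in part b), and in both cases the estimate $\sum_{i=1}^k i^2\le \tfrac{k(k+1)^2}{3}$, which is exactly what is needed to recover the stated constants. All the exponent bookkeeping checks out, including the $\rho=1$ endpoint.
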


The following lemma describes some important observations regarding the execution of procedure $\mathcal{G}_{FAPL}$.
\begin{lemma}
\label{lemTech1}
Let $\mathcal{E}_f(l):=\{x\in B(\overline{x},R):f(x)\le l\}$. If $\cE_f(l)\neq \emptyset$, then the following statements hold for
procedure $\mathcal{G}_{FAPL}$.
\begin{enumerate}[a)]
\item Step \ref{stepGFAPLQ} is always well-defined unless procedure $\mathcal{G}_{FAPL}$ already terminated\label{lemTech1partA}.
\item $\cE_f(l)\subseteq \underline{Q}_k\subseteq Q_k \subseteq \overline{Q}_k$ for any $k\ge 1$\label{lemTech1partB}.
\item  If $\underline{Q}_k \neq \emptyset$, then problem \eqref{FAPLsub} in step \ref{stepGFAPLp} has a unique solution. Moreover, if
procedure $\mathcal{G}_{FAPL}$ terminates at step \ref{stepGFAPLp}, then $l\le f^{\ast}$\label{lemTech1partC}.
\end{enumerate}
\end{lemma}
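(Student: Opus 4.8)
The plan is to establish all three parts simultaneously by induction on $k$, since they are tightly interlocking: defining the iterate $x_k$ in \eqref{FAPLsub} presupposes $\underline{Q}_k\neq\emptyset$ (part c), proving $\underline{Q}_k\subseteq\overline{Q}_k$ so that step \ref{stepGFAPLQ} can select a valid $Q_k$ (part a) presupposes that $x_k$ exists, and the containment chain of part b) then feeds the next induction level. The single workhorse throughout is the variational characterization of the Euclidean projection: because $x_k$ minimizes $\tfrac12\|x-\overline{x}\|^2$ over the closed convex polyhedron $\underline{Q}_k$, it satisfies $\langle x_k-\overline{x},\,x-x_k\rangle\ge 0$ for every $x\in\underline{Q}_k$.

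For the inductive invariant I would carry the single statement $\cE_f(l)\subseteq Q_{k}$, with the base case $Q_0=\R^n$ trivial. Granting $\cE_f(l)\subseteq Q_{k-1}$, the core of part b) is the subgradient inequality: for any $x\in\cE_f(l)$, convexity of $f$ gives $h(x_k^l,x)=f(x_k^l)+\langle f'(x_k^l),x-x_k^l\rangle\le f(x)\le l$, so $x$ satisfies the new cut; intersecting with $\cE_f(l)\subseteq Q_{k-1}$ yields $\cE_f(l)\subseteq\underline{Q}_k$. In particular $\underline{Q}_k\neq\emptyset$ (this is where the hypothesis $\cE_f(l)\neq\emptyset$ enters), so the projection $x_k$ exists uniquely, and the projection inequality above is exactly the defining condition of $\overline{Q}_k$, giving $\underline{Q}_k\subseteq\overline{Q}_k$; hence a polyhedral $Q_k$ between them exists (e.g. $Q_k=\overline{Q}_k$), which is part a). The choice $\underline{Q}_k\subseteq Q_k\subseteq\overline{Q}_k$ in step \ref{stepGFAPLQ} then supplies the two remaining inclusions of part b), and together with $\cE_f(l)\subseteq\underline{Q}_k$ propagates $\cE_f(l)\subseteq Q_k$, closing the induction.

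For the uniqueness claim in part c), when $\underline{Q}_k\neq\emptyset$ it is a nonempty closed convex set and $d(x)=\tfrac12\|x-\overline{x}\|^2$ is strictly convex and coercive, so \eqref{FAPLsub} has a unique minimizer (the projection of $\overline{x}$ onto $\underline{Q}_k$). For the lower-bound assertion I would argue by contraposition: if $\cE_f(l)\neq\emptyset$, then as above $\cE_f(l)\subseteq\underline{Q}_k$ is nonempty, and any $z\in\cE_f(l)\subseteq B(\overline{x},R)$ forces $\|x_k-\overline{x}\|\le\|z-\overline{x}\|\le R$ by the minimizing property of the projection, so neither termination test in step \ref{stepGFAPLp} can fire. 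Consequently, termination at step \ref{stepGFAPLp} (whether from $\underline{Q}_k=\emptyset$ or from $\|x_k-\overline{x}\|>R$) forces $\cE_f(l)=\emptyset$, i.e. $f(x)>l$ for all $x\in B(\overline{x},R)$, whence $l\le f^{\ast}$, with $f^{\ast}$ read as the ball-constrained optimal value $f^{\ast}_{\overline{x},R}$ that the procedure is certifying.

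I expect the main obstacle to be organizing the interdependence cleanly, so that $x_k$, $\overline{Q}_k$, and $Q_k$ are only invoked after their existence has been secured at the same induction level; the invariant should be threaded as
\[
\cE_f(l)\subseteq Q_{k-1}\ \Rightarrow\ \underline{Q}_k\neq\emptyset\ \Rightarrow\ x_k\text{ exists}\ \Rightarrow\ \underline{Q}_k\subseteq\overline{Q}_k\ \Rightarrow\ \cE_f(l)\subseteq Q_k.
\]
The only genuinely quantitative step is the projection distance bound $\|x_k-\overline{x}\|\le R$ used to rule out the radius test; everything else reduces to the two elementary facts, the subgradient inequality and the projection optimality condition.
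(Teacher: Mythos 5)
Your proposal is correct and follows essentially the same route as the paper's proof: the induction on the invariant $\cE_f(l)\subseteq Q_k$ via the subgradient inequality $h(x_k^l,x)\le f(x)\le l$, the projection optimality condition to get $\underline{Q}_k\subseteq\overline{Q}_k$, and the contradiction/contraposition argument showing that termination at step 2 forces $\cE_f(l)=\emptyset$. Your explicit reading of $f^\ast$ as the ball-constrained value $f^\ast_{\overline{x},R}$ is the intended one, and your use of an arbitrary $z\in\cE_f(l)$ in place of the paper's $x_R^\ast$ is an inessential (if slightly cleaner) variation.
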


\begin{proof}
To prove part \ref{lemTech1partA}, we will use induction to prove that $\cE_f(l)\subseteq Q_k$ for all $k\ge 0$. Firstly, as $Q_0$ is set to $\R^n$, we have $\cE_f(l)\subseteq Q_0$. Moreover, if $\cE_f(l)\subseteq Q_{k-1}$ for some $k\ge 1$, then from the definition of $\underline{Q}_k$ in \eqref{DEFQ_kLow} and the observation that $h(x_k^l,x)\le f(x)\le l$ for all $x\in\cE_f(l)$, we have $\cE_f(l)\subseteq \underline{Q}_k\subseteq Q_k$, hence part a) holds.

To prove \ref{lemTech1partB}, it suffices to show that $\underline{Q}_k\subseteq \overline{Q}_k$, since $Q_k$ is chosen between $\underline{Q}_k$ and $\overline{Q}_k$, and $\cE_f(l)\subseteq \underline{Q}_k$ is proved from the above induction. By the definition of $\overline{Q}_k$ in \eqref{DEFQ_KUp}, we have $\overline{Q}_k = \{x\in\R^n:d(x)\ge d(x_k) \}$, hence $\underline{Q}_k\subseteq \overline{Q}_k$, and part \ref{lemTech1partB} holds.

We now provide the proof of part \ref{lemTech1partC}. From the definition of $Q_k$ in step \ref{stepGFAPLQ} and the definition of $\underline{Q}_k$ in \eqref{DEFQ_kLow} we can see that $\underline{Q}_k$ is the intersection of half-spaces, hence it is convex and closed. Therefore, the subproblem \eqref{FAPLsub} always has a unique solution as long as $\underline{Q}_k$ is non-empty.

To finish the proof it suffices to show that $\cE_f(l)=\emptyset$ when either $\underline{Q}_k=\emptyset$ or $\|x_k-\overline{x}\|>R$, which can be proved by contradiction. Firstly, if $\underline{Q}_k=\emptyset$ but $\cE_f(l)\not=\emptyset$, then by part \ref{lemTech1partB} proved above, we have $\cE_f(l)\subseteq \underline{Q}_k$, which contradicts the assumption that $\underline{Q}_k$ is empty. On the other hand, supposing that $\|x_k-\overline{x}\|> R$ and $\cE_f(l)\not=\emptyset$, let $x_R^{\ast}:=\argmin_{x\in B(\overline{x},r)}f(x)$, it is clear that $x_R^*\in\cE_f(l)\subseteq \underline{Q}_k$ by \ref{lemTech1partB}, however $\|x_R^{\ast}-\overline{x}\|\le R <\|x_k-\overline{x}\|$ which contradicts the definition of $x_k$ in \eqref{FAPLsub}.
\end{proof}

The following lemma shows that whenever procedure $\mathcal{G}_{FAPL}$ terminates, the
gap between the upper and lower bounds on $f^*_{\overline{x},R}$
is reduced by a constant factor.

\begin{lemma}
\label{lemq}
Let $\ub:=f(\hat{x}),\ub^+:=f(x^+)$ in procedure $\mathcal{G}_{FAPL}$.
Whenever procedure $\mathcal{G}_{FAPL}$ terminates, we have
$\ub^+-\lb^+\le q(\ub-\lb)$, where
\begin{equation}
\label{qdef}
q:=\max\{\beta, 1-(1-\theta)\beta\}.
\end{equation}
\end{lemma}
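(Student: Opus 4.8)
The plan is to argue by cases according to which of the two termination tests in Procedure~\ref{funFAPL} fires, and in each case to reduce everything to the single algebraic identity governing the level $l$. Writing $\ub = \overline{f}_0 = f(\hat{x})$ and recalling that the level is set to $l = \beta\,\lb + (1-\beta)\ub$ in step~\ref{stepGFAPLinit}, I would first record the two elementary consequences $l - \lb = (1-\beta)(\ub - \lb)$ and $\ub - l = \beta(\ub - \lb)$, obtained by substituting this convex combination. These are the only facts about $l$ that the estimate needs.

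Next I would treat termination at step~\ref{stepGFAPLp} (the prox-center/lower-bound update). Here the outputs are $x^+ = x_{k-1}^u$ and $\lb^+ = l$, so $\ub^+ = f(x_{k-1}^u) = \overline{f}_{k-1}$. Since the upper bounds produced in step~\ref{stepGFAPLub} are nonincreasing in $k$ (by construction $\overline{f}_k = \min\{f(\tilde{x}_k^u),\overline{f}_{k-1}\} \le \overline{f}_{k-1}$, starting from $\overline{f}_0 = \ub$), we have $\ub^+ \le \ub$. Combining this with the identity for $\ub - l$ yields $\ub^+ - \lb^+ \le \ub - l = \beta(\ub - \lb)$, which is at most $q(\ub - \lb)$ because $q \ge \beta$.

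Finally I would handle termination at step~\ref{stepGFAPLub}. Now $x^+ = x_k^u$ and $\lb^+ = \lb$, and the test that triggered termination reads $\ub^+ = \overline{f}_k \le l + \theta(\overline{f}_0 - l) = l + \theta(\ub - l)$. Therefore $\ub^+ - \lb^+ \le (l - \lb) + \theta(\ub - l)$, and inserting the two identities from the first paragraph gives $\ub^+ - \lb^+ \le (1-\beta)(\ub - \lb) + \theta\beta(\ub - \lb) = [1 - (1-\theta)\beta](\ub - \lb)$, again at most $q(\ub - \lb)$ since $q \ge 1 - (1-\theta)\beta$. Taken together the two cases give the claimed factor $q = \max\{\beta,\,1-(1-\theta)\beta\}$.

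I do not expect any real obstacle: the statement is a bookkeeping consequence of how $l$, the upper bound, and the termination tests are defined. The only points requiring a little care are verifying the monotonicity $\overline{f}_k \le \overline{f}_{k-1}$, so that the step~\ref{stepGFAPLp} case is controlled by $\ub$ rather than by the current $\overline{f}_{k-1}$, and keeping the roles of $\lb$ and $l$ straight across the two exits (the lower bound is raised to $l$ only at step~\ref{stepGFAPLp} and left untouched at step~\ref{stepGFAPLub}). That $\lb^+ = l$ is a genuine lower bound on $f^*_{\overline{x},R}$ in the first case is exactly part~\ref{lemTech1partC} of Lemma~\ref{lemTech1}, which is what makes the reduced gap meaningful, though it is not needed for the arithmetic inequality itself.
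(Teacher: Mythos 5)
Your proposal is correct and follows essentially the same route as the paper's proof: the same two-case split according to the exit point, the same monotonicity observation $\overline{f}_k\le\overline{f}_{k-1}$ giving $\ub^+\le\ub$, and the same algebra with $l=\beta\,\lb+(1-\beta)\ub$ yielding the factors $\beta$ and $1-(1-\theta)\beta$ respectively. No gaps.
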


\begin{proof}
By the definition of $x_k^u$ in \eqref{eqnxku} and the definition of $\overline{f}_k$ in step \ref{stepGFAPLub} of procedure
$\mathcal{G}_{FAPL}$, we have
$\overline{f}_{k}\le \overline{f}_{k-1},\ \forall k\ge 1$, which implies $\ub^+\le \ub$.
Procedure $\mathcal{G}_{FAPL}$ could terminate at either step \ref{stepGFAPLp} or \ref{stepGFAPLub}. We first suppose that it terminates at step \ref{stepGFAPLp} after $k$ iterations. Using the termination condition $\lb^+=l=\beta \cdot \lb+(1-\beta)\ub$, we have
$$ \ub^+-\lb^+\le \ub -\beta \, \lb+(1-\beta)\ub= \beta(\ub -\lb).$$
Now suppose that Procedure \ref{funFAPL} terminates at step \ref{stepGFAPLub} after $k$ iterations. We have $\ub^+=\overline{f}_k\le l+\theta(\ub-l)$ and $\lb^+\ge \lb$. Using the fact that $l=\beta\cdot \lb+(1-\beta)\ub$, we conclude that
$$\ub^+-\lb^+\le l+\theta \, (\ub-l)-\lb=[1-(1-\theta)\beta](\ub-\lb). $$
We conclude the lemma by combining the above two relations.
\end{proof}

We now provide a bound on the number of iterations performed by procedure  $\mathcal{G}_{FAPL}$.
Note that the proof of this result is similar to Theorem 3 in \cite{Lan13-1}.
\begin{proposition} \label{prop_FAPL_gap}
If the stepsizes $\{\alpha_k\}_{k\ge 1}$ are chosen such that \eqref{stepsize} holds,
then the number of iterations performed by procedure $\mathcal{G}_{FAPL}$ does not exceed
\begin{equation}
\label{numofeachgapred}
N(\Delta):= \left(\frac{cMR^{1+\rho}}{(1+\rho)\theta\beta\Delta} \right)^{\frac{2}{1+3\rho}}+1,
\end{equation}
where $\Delta:=\ub-\lb$.
\end{proposition}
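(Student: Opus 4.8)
The plan is to show that, as long as procedure $\mathcal{G}_{FAPL}$ has not yet terminated, the upper-bound gap $\overline{f}_k-l$ decays at the rate $k^{-(1+3\rho)/2}$, and then to invert this rate against the termination test at step \ref{stepGFAPLub}. Recall that the procedure stops at step \ref{stepGFAPLub} once $\overline{f}_k-l\le\theta(\overline{f}_0-l)$, and since $l=\beta\cdot\lb+(1-\beta)\overline{f}_0$ with $\overline{f}_0=f(\hat x)=\ub$, we have $\overline{f}_0-l=\beta(\ub-\lb)=\beta\Delta$. Thus it suffices to establish a bound of the form $\overline{f}_k-l\le\tfrac{cMR^{1+\rho}}{1+\rho}\,k^{-(1+3\rho)/2}$ and then find the smallest $k$ forcing the right-hand side below $\theta\beta\Delta$.

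First I would set up a recursion for $\delta_k:=\overline{f}_k-l$. The two accelerated sequences differ only in the prox-center used, so $\tilde x_k^u-x_k^l=\alpha_k(x_k-x_{k-1})$. Applying the smoothness relation \eqref{smoothrelation} at $x_k^l$ and using that $h(x_k^l,\cdot)$ is affine gives $f(\tilde x_k^u)\le h(x_k^l,\tilde x_k^u)+\tfrac{M}{1+\rho}\alpha_k^{1+\rho}\|x_k-x_{k-1}\|^{1+\rho}$. Since $\tilde x_k^u=(1-\alpha_k)x_{k-1}^u+\alpha_k x_k$, affinity of $h$ together with $h(x_k^l,x_{k-1}^u)\le f(x_{k-1}^u)=\overline{f}_{k-1}$ and the constraint $h(x_k^l,x_k)\le l$ (valid because $x_k\in\underline{Q}_k$) yields $h(x_k^l,\tilde x_k^u)\le(1-\alpha_k)\overline{f}_{k-1}+\alpha_k l$. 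Combining these with $\overline{f}_k\le f(\tilde x_k^u)$ produces $\delta_k\le(1-\alpha_k)\delta_{k-1}+\tfrac{M}{1+\rho}\alpha_k^{1+\rho}\|x_k-x_{k-1}\|^{1+\rho}$. Dividing by $\gamma_k$ so that the $(1-\alpha_k)$ factor telescopes, and using $\alpha_1=1$ to kill the $\delta_0$ term, I obtain $\tfrac{\delta_k}{\gamma_k}\le\tfrac{M}{1+\rho}\sum_{i=1}^k\tfrac{\alpha_i^{1+\rho}}{\gamma_i}\|x_i-x_{i-1}\|^{1+\rho}$.

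The remaining, most delicate step is to bound that weighted sum by $R^{1+\rho}\|\tau_k(\rho)\|_{\frac{2}{1-\rho}}$. The key geometric fact is that the prox-center $\overline{x}$ is fixed and the localizers are nested: since $x_k\in\underline{Q}_k\subseteq Q_{k-1}\subseteq\overline{Q}_{k-1}$, the defining inequality of $\overline{Q}_{k-1}$ gives $\langle x_{k-1}-\overline{x},x_k-x_{k-1}\rangle\ge0$, whence a Pythagorean expansion yields $\|x_k-x_{k-1}\|^2\le\|x_k-\overline{x}\|^2-\|x_{k-1}-\overline{x}\|^2$. Telescoping (and, for the $i=1$ term, using that the natural initialization $x_0=\overline{x}$ makes the telescope exact) together with the fact that the iterates remain in $B(\overline{x},R)$ until termination gives $\sum_{i=1}^k\|x_i-x_{i-1}\|^2\le R^2$. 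I would then apply H\"older's inequality with conjugate exponents $\tfrac{2}{1-\rho}$ and $\tfrac{2}{1+\rho}$ to split the weighted sum, recognizing the first factor as exactly $\|\tau_k(\rho)\|_{\frac{2}{1-\rho}}$ and bounding the second by $R^{1+\rho}$. Invoking the stepsize condition \eqref{stepsize}, namely $\gamma_k\|\tau_k(\rho)\|_{\frac{2}{1-\rho}}\le ck^{-(1+3\rho)/2}$, collapses everything to $\delta_k\le\tfrac{cMR^{1+\rho}}{1+\rho}k^{-(1+3\rho)/2}$. Setting this at or below $\theta\beta\Delta$ and solving for $k$ gives the threshold $\big(\tfrac{cMR^{1+\rho}}{(1+\rho)\theta\beta\Delta}\big)^{2/(1+3\rho)}$, so the procedure must terminate within one more iteration, which yields $N(\Delta)$.

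I expect the main obstacle to be assembling the displacement bound and the H\"older step correctly: one must verify the nesting $\underline{Q}_k\subseteq Q_{k-1}\subseteq\overline{Q}_{k-1}$ that activates the Pythagorean inequality, confirm that all iterates stay in $B(\overline{x},R)$ prior to termination (so the telescoped sum is genuinely bounded by $R^2$, with the $i=1$ term handled by the choice of $x_0$), and match the exponents in H\"older's inequality to the definition of $\tau_k(\rho)$ and the norm $\|\cdot\|_{\frac{2}{1-\rho}}$. The recursion setup, although routine once the identity $\tilde x_k^u-x_k^l=\alpha_k(x_k-x_{k-1})$ is spotted, must also carefully track that $\overline{f}_k\le f(\tilde x_k^u)$ regardless of which branch in \eqref{eqnxku1} defines $x_k^u$.
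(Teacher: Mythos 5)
Your argument is correct and follows essentially the same route as the paper's proof: the paper obtains the displacement bound $\sum_\tau\|x_{\tau+1}-x_\tau\|^2\le R^2$ from the optimality of $x_k$ over $Q_k$ together with the strong convexity of $d$ (equivalent to your Pythagorean/nesting step), and then simply cites relation (3.14) of \cite{Lan13-1} for the decay estimate $f(x_k^u)-l\le\frac{M}{1+\rho}[2d(x_k)]^{(1+\rho)/2}\gamma_k\|\tau_k(\rho)\|_{2/(1-\rho)}$, which is exactly the recursion-plus-H\"older computation you carry out explicitly, before comparing against the threshold $\theta\beta\Delta$ in the same way. Your version is just a self-contained unpacking of that cited relation (including the correct observation that the $i=1$ term of the telescope is cleanest when $x_0=\overline{x}$), so there is nothing substantively different to flag.
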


\begin{proof}
It can be easily seen from the definition of $\overline{Q}_k$ that $x_k=\argmin_{x\in \overline{Q}_k}d(x)$,
which, in view of the fact that $Q_k\subseteq \overline{Q}_k$, then implies that $x_k= \argmin_{x\in Q_k} d(x)$.
Using this observation, and the fact that $x_{k+1}\in Q_k$ due to \eqref{DEFQ_kLow} and \eqref{FAPLsub},
we have
$\left\langle  \nabla d(x_k),x_{k+1}-x_k\right\rangle \ge 0.$
Since $d(x)$ is strongly convex with modulus 1, we have
$$d(x_{k+1})\ge d(x_k)+\left\langle  \nabla d(x_k),x_{k+1}-x_k\right\rangle +\frac{1}{2}\|x_{k+1}-x_k\|^2.$$
Combining the above two relations, we conclude $\frac{1}{2}\|x_{k+1}-x_k\|^2\le d(x_{k+1})-d(x_k)$.
Summing up these inequalities
for any $k \ge 1$, we conclude that
 \begin{equation}
 \label{Bounddk}
 \frac{1}{2}\sum\limits_{\tau=1}^{k}\|x_{\tau+1}-x_\tau \|\le d(x_{k+1})=\frac{1}{2}\|x_{k+1}-\overline{x}\|^2\le \frac{1}{2}R^2.
 \end{equation}
Now suppose that procedure $\mathcal{G}_{FAPL}$ does not terminate at the $k^{th}$ iteration.
Applying the relation (3.14) in \cite{Lan13-1}, and notice that $\alpha_1 = 1$, we have
\begin{equation} \label{pfrelation}
f(x_k^u)-l\le\frac{M}{1+\rho}[2d(x_k)]^{\frac{1+\rho}{2}}\gamma_k\|\tau_k(\rho)\|_{\frac{2}{1-\rho}}.
\end{equation}
In view of steps \ref{stepGFAPLp}
and \ref{stepGFAPLub} in procedure \ref{funFAPL}, and using the fact that $l=\beta\cdot \lb + (1-\beta)\ub$ in step \ref{stepGFAPLinit}, we have
$$f(x_k^u)-l>\theta(\ub-l)=\theta\beta\Delta.$$
Combining the above two relations, and using \eqref{stepsize} and \eqref{Bounddk}, we obtain
\begin{equation}
  \theta\beta\Delta< \frac{MR^{1+\rho}}{(1+\rho)}\cdot \frac{c}{k^{\frac{1+3\rho}{2}}},
\end{equation}
which implies that
\begin{equation}
  k<\left(\frac{cMR^{1+\rho}}{(1+\rho)\theta\beta\Delta} \right)^{\frac{2}{1+3\rho}}.
\end{equation}
\end{proof}

\vgap

In view of Lemma~\ref{lemq} and Proposition~\ref{prop_FAPL_gap},
we are now ready to describe the FAPL method, which performs a sequence of calls to procedure $\mathcal{G}_{FAPL}$
until an approximate solution with sufficient accuracy is found.
\begin{algorithm}[h]
\caption{\label{algFAPL}The fast accelerated prox-level (FAPL) method}
\begin{algorithmic}[1]
		\makeatletter
		\setcounter{ALG@line}{-1}
		\makeatother
\State \label{stepFAPLinit}Given ball $B(\overline{x},R)$, choose initial point $p_0\in B(\overline{x},R)$, tolerance $\epsilon>0$ and parameters $\beta,\theta \in(0,1)$.
\State \label{stepFAPLinitbound}Set $p_1\in \Argmin_{x\in B(\overline{x},R)} h(p_0,x)$, $\lb_1=h(p_0,p_1), \ub_1=\min\{f(p_0),f(p_1)\}$, let $\hat{x}_1$ be either $p_0$ or $p_1$ such that $f(\hat{x}_1)=\ub_1$,  and $s=1$.
\State \label{stepFAPLmain}If $\ub_s-\lb_s\leq \epsilon$, \textbf{terminate} and \textbf{output} approximate solution $\hat{x}_s$.
\State \label{stepFAPLcall}Set $(\hat{x}_{s+1}, \lb_{s+1})=\mathcal{G}_{FAPL}(\hat{x}_{s}, \lb_s, R, \overline{x},\beta,\theta)$ and  $\ub_{s+1}=f(\hat{x}_{s+1})$.
\State Set $s=s+1$ and go to step \ref{stepFAPLmain}.
\end{algorithmic}
\end{algorithm}

A phase of the FAPL method occurs whenever $s$ increments by $1$. For the sake of simplicity, each iteration of
procedure $\mathcal{G}_{FAPL}$ is also referred to as an iteration of the FAPL method. The following theorem establishes
the complexity bounds on
the total numbers of phases and iterations performed by the FAPL method and its proof is similar to that of
Theorem 4 in \cite{Lan13-1}.

\begin{theorem}
\label{thmFAPL}
If the stepsizes $\{\alpha_k\}$ in procedure $\mathcal{G}_{FAPL}$ are chosen such that \eqref{stepsize} holds, then the following statements
hold for the FAPL method.
\begin{itemize}
\item [a)] The number of phases performed by the FAPL method does not exceed
\begin{equation}
\label{numofFAPLgapred}
S:=\left\lceil \max\left\{0,\log_{\frac{1}{q}} \left(\frac{(2R)^{1+\rho}M}{(1+\rho)\epsilon} \right) \right\}\right\rceil.
\end{equation}
\item [b)] The total number of iterations performed by the FAPL method for computing an $\epsilon$-solution of problem \eqref{eqnCCP} can be bounded by
\begin{equation}
\label{eqnFAPL}
N(\epsilon):=  S+\frac{1}{1-q^{\frac{2}{1+3\rho}}}\left(\frac{cMR^{1+\rho}}{(1+\rho)\theta\beta\epsilon}  \right)^{\frac{2}{1+3\rho}},
\end{equation}
where $q$ is defined in \eqref{qdef}.
\end{itemize}
\end{theorem}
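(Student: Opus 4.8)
The plan is to obtain (a) from the geometric gap reduction of Lemma~\ref{lemq} together with an a priori bound on the initial gap, and then to obtain (b) by summing the per-phase iteration bound of Proposition~\ref{prop_FAPL_gap} over all phases as a geometric series. Throughout I write $\Delta_s:=\ub_s-\lb_s$ for the gap at the beginning of phase $s$, and recall that phase $s$ is executed at step~\ref{stepFAPLmain} only when $\Delta_s>\epsilon$.

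For (a), Lemma~\ref{lemq} shows that each call to $\mathcal{G}_{FAPL}$ contracts the gap by the factor $q$, so $\Delta_{s+1}\le q\,\Delta_s$ and hence $\Delta_{s+1}\le q^{s}\Delta_1$. To convert this into the bound $S$ I must control the initial gap. Since $\ub_1=\min\{f(p_0),f(p_1)\}\le f(p_1)$ and $\lb_1=h(p_0,p_1)$, I would write
\[
\Delta_1\le f(p_1)-h(p_0,p_1)=f(p_1)-f(p_0)-\langle f'(p_0),p_1-p_0\rangle,
\]
and then apply the smoothness assumption \eqref{smoothrelation} on $B(\overline{x},R)$ with $x=p_0,\ y=p_1$, together with $\|p_1-p_0\|\le 2R$ (both points lie in the ball), to obtain $\Delta_1\le \frac{M}{1+\rho}(2R)^{1+\rho}$. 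The algorithm halts once $\Delta_s\le\epsilon$, which by $\Delta_{s+1}\le q^{s}\Delta_1$ is guaranteed as soon as $q^{s}\Delta_1\le\epsilon$, i.e.\ $s\ge\log_{1/q}(\Delta_1/\epsilon)$. Substituting the bound on $\Delta_1$, taking the ceiling, and taking the maximum with $0$ (to cover the trivial case where the initial gap is already below $\epsilon$) yields exactly $S$ as in \eqref{numofFAPLgapred}.

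For (b), the number of iterations in phase $s$ is at most $N(\Delta_s)$ by Proposition~\ref{prop_FAPL_gap}, so the total is bounded by $\sum_{s}N(\Delta_s)$ over the performed phases (whose number is at most $S$ by part (a)); the ``$+1$'' terms in \eqref{numofeachgapred} then contribute at most $S$ in aggregate. To handle the leading terms I would bound the gaps from below: reading the relation $\Delta_{s+1}\le q\Delta_s$ backward from the last performed phase $S$ gives $\Delta_s\ge \Delta_S\,q^{-(S-s)}$, and since phase $S$ is actually executed we have $\Delta_S>\epsilon$, whence $\Delta_s>\epsilon\,q^{-(S-s)}$. Therefore
\[
N(\Delta_s)\le \left(\frac{cMR^{1+\rho}}{(1+\rho)\theta\beta\epsilon}\right)^{\frac{2}{1+3\rho}} q^{\frac{2(S-s)}{1+3\rho}}+1,
\]
and summing over $s=1,\dots,S$ while bounding the finite geometric sum $\sum_{j=0}^{S-1}\bigl(q^{2/(1+3\rho)}\bigr)^{j}$ by $1/\bigl(1-q^{2/(1+3\rho)}\bigr)$ produces exactly the bound \eqref{eqnFAPL}.

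The step that genuinely requires care is the summation in (b): because $N(\Delta_s)$ grows as the gaps shrink, a crude estimate would be dominated by the last (largest) term times $S$, losing both the sharp constant and the geometric structure. The resolution is to exploit the backward contraction $\Delta_s\ge \Delta_S\,q^{-(S-s)}$, so that the final phase supplies the dominant term and the earlier phases contribute a convergent geometric tail that collapses into the factor $1/\bigl(1-q^{2/(1+3\rho)}\bigr)$. The remaining ingredients---the gap contraction and the initial-gap estimate---are routine consequences of Lemma~\ref{lemq}, Proposition~\ref{prop_FAPL_gap}, and \eqref{smoothrelation}.
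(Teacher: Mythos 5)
Your proposal is correct and follows essentially the same route as the paper: part (a) via the gap contraction of Lemma~\ref{lemq} combined with the initial-gap estimate $\Delta_1\le (2R)^{1+\rho}M/(1+\rho)$ from \eqref{smoothrelation}, and part (b) by summing the per-phase bound of Proposition~\ref{prop_FAPL_gap} using $\Delta_s\ge\epsilon\,q^{-(S-s)}$ and collapsing the geometric series into the factor $1/\bigl(1-q^{2/(1+3\rho)}\bigr)$. No gaps to report.
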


\begin{proof}
We first prove part a). Let $\Delta_s:=\ub_s-\lb_s$, without loss of generality, we assume that $\Delta_1>\epsilon$. In view of step \ref{stepFAPLinit} in the FAPL method and \eqref{smoothrelation}, we have
 \begin{equation}
 \label{initialDelta}
 \Delta_1\le f(p_1)-h(p_0,p_1)=f(p_1)-f(p_0)-\left\langle f'(p_0),p_1-p_0\right\rangle \le \frac{(2R)^{1+\rho}M}{1+\rho}.
 \end{equation}
Also, by Lemma \ref{lemq} we can see that $\Delta_{s+1}\le q\Delta_s$ for any $s\ge 1$, which implies that
$$\Delta_{s+1}\le q^s \Delta_1,\ \forall s\ge 0.$$
Moreover, if an $\epsilon$-solution is found after $\tilde{S}$ phases of the FAPL method, then we have
\begin{equation}
\label{FAPLgaprel}
\Delta_{S}> \epsilon\ge \Delta_{S+1}.
\end{equation}
Combining the above three inequalities, we conclude that
\begin{equation}
  \epsilon < q^{S-1}\Delta_1\le q^{S-1}\frac{(2R)^{1+\rho}M}{1+\rho},
\end{equation}
and part a) follows immediately from the above inequality.
We are now ready to prove part b). In view of Lemma \ref{lemq} and \eqref{FAPLgaprel}, we have
$\Delta_s\ge {\epsilon}{q^{s-S}}$. Using this estimate, part a), and Proposition \ref{prop_FAPL_gap}, we conclude that the total number of iterations performed by the FAPL method is bounded by
\begin{align}
  N(\epsilon)&=\sum_{s=1}^{S}N_s= S+\left(\frac{cMR^{1+\rho}}{(1+\rho)\theta\beta}  \right)^{\frac{2}{1+3\rho}}\sum_{s=1}^{S}q^{\frac{2(S-s)}{1+3\rho}}
  <  S+\frac{1}{1-q^{\frac{2}{1+3\rho}}}\left(\frac{cMR^{1+\rho}}{(1+\rho)\theta\beta\epsilon}  \right)^{\frac{2}{1+3\rho}},
  \end{align}
where $N_s$ denotes the number of the iterations of phase $s$ for $1\le s\le S$.
\end{proof}

In view of Theorem \ref{thmFAPL}, the FAPL method achieves the optimal iteration complexity bounds for solving nonsmooth, weakly smooth,
and smooth CP problems, which are the same as the convergence properties of the ABL and APL method (see \cite{nemyud:83,Nest04} for the discussions on the complexity theories for solving CP problems, and \cite{Lan13-1} for the convergence properties of the ABL and APL methods).

\subsection{FUSL for ball-constrained structured problems}
\label{secFUSL}
In this subsection, we still consider the ball-constrained problem in \eqnok{eqnCCP}, but assume that
its objective function is given by
\begin{equation}
\label{USLpro}
f(x):=\hat{f}(x)+F(x),
\end{equation}
where $\hat f$ is a smooth convex function, i.e., $\exists L_{\hat f}>0$ s.t.
\begin{align}
	\label{eqnLf}
	 \hat{f}(y)-\hat{f}(x)-\langle\nabla\hat{f}(x),y-x\rangle\le\frac{L_{\hat{f}}}{2}\|y-x\|^2,
\end{align}
and
\begin{equation}
\label{eqnF}
F(x):=\max\limits_{y\in Y}\{\left\langle  Ax,y\right\rangle -\hat{g}(y)\}.
 \end{equation}
Here, $Y\subseteq \mathbb{R}^m$ is a compact convex set, $\hat{g}:= Y\to \mathbb{R}$ is a relatively simple convex function,
and $A:\mathbb{R}^n\to\mathbb{R}^m$ is a linear operator. Our goal is to present a new prox-level method
for solving problem \eqnok{eqnCCP}-\eqnok{USLpro}, which can significantly reduce
the iteration cost of the USL method in  \cite{Lan13-1}.

Generally, the function $F$ given by \eqref{eqnF} is non-smooth. However,
in an important work \cite{Nest05-1}, Nesterov demonstrated that this function can be closely approximated by a class of smooth convex functions. In particular, letting $v:Y\to\R$ be a prox-function with modulus $\sigma_v$ and denoting $c_v:=\argmin_{v\in Y} v(y)$, we can approximate $F$ in \eqref{eqnF} by the smooth function
\begin{align}
F_{\eta}(x):&=\max_{y\in Y}\{\left\langle Ax,y\right\rangle -\hat{g}(y)-\eta V(y)\},\label{USLypro}
\end{align}
where $\eta>0$ is called the smoothing parameter, and $V(\cdot)$ is the Bregman divergence defined by
\begin{equation}
\label{eqnV}
V(y):=v(y)-v(c_v)-\left\langle  \nabla v(c_v),y-c_v\right\rangle .
\end{equation}
It was shown in \cite{Nest05-1} that the gradient of $F_{\eta}(\cdot)$ given by $\nabla F_{\eta}(x)= A^{\ast}y^{\ast}(x)$ is Lipschitz continuous with constant
\begin{align}
	\label{eqnLeta}
L_{\eta}:={\|A\|^2}/({\eta\sigma_v}),
\end{align}
where $\|A\|$ is the operator norm of $A$, $A^{\ast}$ is the adjoint operator, and $y^{\ast}(x)\in Y$ is the solution to the optimization problem in \eqref{USLypro}.
Moreover, the ``closeness'' of $F_{\eta}(\cdot)$ to $F(\cdot)$ depends linearly on the smoothing parameter $\eta$, i.e.,
\begin{align}
F_{\eta}(x)&\le F(x)\le F_{\eta}(x)+\eta D_{v,Y},\ \forall x\in X,
\end{align}
where
\begin{equation}
\label{defDvY}
D_{v,Y}:=\max_{y,z\in Y}\{v(y)-v(z)-\left\langle \nabla v(z),y-z\right\rangle \}.
\end{equation}
Therefore, if we denote
\begin{align}
	\label{eqnfeta}
	f_{\eta}(x):=\hat{f}(x)+F_{\eta}(x),
\end{align}
then
\begin{align}
f_{\eta}(x)&\le f(x) \le f_{\eta}(x)+\eta D_{v,Y}.\label{USLsmoothrelation}
\end{align}
Applying an optimal gradient method to minimize the smooth function $f_\eta$ in \eqref{eqnfeta}, Nesterov proves
in \cite{Nest05-1} that the iteration complexity for computing an $\epsilon$-solution to
problem \eqnok{eqnCCP}-\eqref{USLpro} is bounded by ${\cal O}(1/\epsilon)$. However,
the values of quite a few problem parameters, such as $\|A\|,\sigma_v$ and $D_{v,Y}$, are required for the implementation of Nesterov's smoothing scheme.

By incorporating Nesterov's smoothing technique \cite{Nest05-1} into the APL method, Lan developed in \cite{Lan13-1}
a new bundle-level type method, namely the uniform smoothing level (USL) method, to solve structured problems
given in the form of \eqref{USLpro}.
While the USL method achieves the same optimal iteration complexity  as Nesterov's smoothing scheme in \cite{Nest05-1}, one advantage of the USL method over Nesterov's smoothing scheme is that the smoothing parameter $\eta$ is adjusted dynamically during the execution, and an estimate of $D_{v,Y}$ is obtained automatically, which makes the USL method problem parameter free. However, similar to the APL method, each iteration of the USL method involves the solutions of two subproblems.
Based on the USL method in \cite{Lan13-1} and our analysis of the FAPL method in Section \ref{secFAPL}, we propose a fast USL (FUSL) method that solves problem \eqref{USLpro} with the same optimal iteration complexity as the USL method, but requiring only to solve one simpler subproblem in each iteration.

\vgap

Similar to the FAPL method, the FUSL method consists of different phases, and each phase calls a gap reduction procedure,
denoted by $\mathcal{G}_{FUSL}$,  to reduce the gap between the upper and lower bounds on $f^{\ast}_{\overline{x},R}$ in \eqref{eqnCCP} by a constant
factor. We start by describing procedure $\mathcal{G}_{FUSL}$.

\begin{procedure}[h]
\caption{\label{funFUSL}The FUSL gap reduction procedure: $(x^+,D^+, \lb^+)=\mathcal{G}_{FUSL}(\hat{x},D,\lb, R,\overline{x},\beta,\theta)$}
\begin{algorithmic}[1]
		\makeatletter
		\setcounter{ALG@line}{-1}
		\makeatother
\State \label{stepGFUSLinit}Let $k=1$, $\overline{f}_0=f(\hat{x}), l=\beta \cdot \lb+(1-\beta)\overline{f}_0$, $Q_0=\R^n$, $x_0^u=\hat{x}$, $x_0 \in B(\overline{x}, R)$ be arbitrarily given,
and
\begin{equation}
\label{etadef}
\eta:=\theta(\overline{f}_0-l)/(2D).
\end{equation}
\State \label{stepGFUSLcut}Update the cutting plane model: set $x_k^l$ to \eqref{def_xkl}, $\underline Q_k$ to \eqnok{DEFQ_kLow}, and
\begin{align}
h(x_k^l,x)=h_{\eta}(x_k^l,x)=f_{\eta}(x_k^l)+\left \langle f'_{\eta}(x_k^l),x-x_k^l \right \rangle. \label{new_cut}
\end{align}
\State \label{stepGFUSLp}Update the prox-center: set $x_k$ to \eqnok{FAPLsub}.
If $\underline{Q}_k=\emptyset$ or $\|x_k-\overline{x}\|>R$, then \textbf{terminate} with output $x^+=x^u_{k-1},D^+=D,\lb^+=l$.

\State \label{stepGFUSLub}Update the upper bound and the estimate of $D_{v,Y}$: set $\tilde x_k^u$ to \eqnok{eqnxku}, $x_k^u$ to \eqnok{eqnxku1},
and $\overline{f}_k=f(x_k^u)$. Check the following conditions:
\begin{enumerate}[3a)]
\item \label{stepGFUSLD}if $f(x_k^u)\le l+\theta(\overline{f}_0-l)$, then \textbf{terminate} with output $x^+=x^u_{k},D^+=D,\lb^+=\lb$.
\item \label{stepGFUSL2D}if $f(x_k^u)> l+\theta(\overline{f}_0-l)$ and $f_{\eta}(x_k^u)\le l +\frac{\theta}{2}(\overline{f}_0-l)$, then \textbf{terminate} with output $x^+=x^u_{k}, D^+=2D, \lb^+=\lb$.
\end{enumerate}
\State \label{stepGFUSLQ} Choose $Q_k$ as same as Step \ref{stepGFAPLQ} in $\mathcal{G}_{FAPL}$, set $k=k+1$, and go to step 1.
\end{algorithmic}
\end{procedure}

A few remarks about  procedure $\mathcal{G}_{FUSL}$ are in place.
Firstly,  since the nonsmooth objective function $f$ is replaced by its smoothed approximation $f_\eta$,
we replace the cutting plane model in \eqref{linearappro} with the one for $f_\eta$ (see \eqref{new_cut}). Also note that for the USL method in \cite{Lan13-1}, $\hat f$ is assumed to be a simple Lipschitz continuous convex function, and only $F_{\eta}$ is approximated by the linear estimation. However in the FUSL method, we assume $\hat f$ is general smooth convex, and linearize both $\hat f$ and $F_{\eta}$ in \eqref{new_cut}.
Secondly, the smoothing parameter $\eta$ is specified as a function of the parameter $D$, $\bar f_0$ and $l$,
where $D$ is an estimator of $D_{v,Y}$ in \eqref{defDvY} and given as an input parameter to
procedure  $\mathcal{G}_{FUSL}$. Thirdly,
same to the FAPL method, the parameters $\{\alpha_k\}$ are chosen according to \eqref{stepsize}. Such conditions
are required to guarantee the optimal convergence of the FUSL method for solving problem \eqnok{eqnCCP}-\eqref{USLpro}.
Fourthly, similar to the FAPL method, the feasible sets $\underline{Q}_k, Q_k, \overline{Q}_k$
only contains a limited number of linear constraints, and
there is only one subproblem (i.e. \eqref{FAPLsub}) involved in procedure $\mathcal{G}_{FUSL}$,
which can be solved exactly when the depth of memory is small.

The following lemma provides some important observations about procedure $\mathcal{G}_{FUSL}$, which
are similar to those for the USL gap reduction procedure in \cite{Lan13-1}.
\begin{lemma}
\label{lemSigPhase}
The following statements hold for procedure $\mathcal{G}_{FUSL}$.
\begin{enumerate}[a)]
\item If this procedure terminates at steps \ref{stepGFUSLp} or \ref{stepGFUSLD}, then we have $\ub^+-\lb^+\le q(\ub-\lb)$, where $q$ is defined in \eqref{qdef} and $\ub:=\overline{f}_0,\ub^{+}:=f(x^{+})$.
\item If this procedure terminates at step \ref{stepGFUSL2D}, then $D< D_{v,Y}$ and $D^+<2D_{v,Y}$.
\end{enumerate}
\end{lemma}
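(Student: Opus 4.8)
The plan is to prove the two parts of Lemma~\ref{lemSigPhase} separately, exploiting the structural parallel to the FAPL analysis (Lemma~\ref{lemq}) for part a), and then a direct argument using the smoothing relation \eqref{USLsmoothrelation} for part b).

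For part a), I would observe that the termination conditions at steps \ref{stepGFUSLp} and \ref{stepGFUSLD} are essentially identical to those of the FAPL gap reduction procedure in Procedure~\ref{funFAPL}. At step \ref{stepGFUSLp}, termination gives $\lb^+=l=\beta\cdot\lb+(1-\beta)\ub$, so that $\ub^+-\lb^+\le\ub-l=\beta(\ub-\lb)$, using $\ub^+\le\ub$ (which follows as in Lemma~\ref{lemq} from the monotonicity of $\overline{f}_k$ enforced by \eqref{eqnxku}--\eqref{eqnxku1}). At step \ref{stepGFUSLD}, the condition $f(x_k^u)\le l+\theta(\overline{f}_0-l)$ together with $\lb^+=\lb$ and $l=\beta\cdot\lb+(1-\beta)\ub$ yields $\ub^+-\lb^+\le[1-(1-\theta)\beta](\ub-\lb)$. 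Taking the maximum of the two bounds gives the factor $q$ in \eqref{qdef}, mirroring Lemma~\ref{lemq} exactly. The only technical point to verify is that $\ub^+\le\ub$ still holds in the smoothed setting, but this is immediate from the update rule for $x_k^u$, which is unchanged from the FAPL method.

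For part b), I would work directly with the two inequalities defining the step \ref{stepGFUSL2D} termination, namely $f(x_k^u)>l+\theta(\overline{f}_0-l)$ and $f_{\eta}(x_k^u)\le l+\frac{\theta}{2}(\overline{f}_0-l)$. Subtracting these gives $f(x_k^u)-f_{\eta}(x_k^u)>\frac{\theta}{2}(\overline{f}_0-l)$. By the smoothing relation \eqref{USLsmoothrelation} we have $f(x)-f_{\eta}(x)\le\eta D_{v,Y}$ for all $x$, so $\eta D_{v,Y}>\frac{\theta}{2}(\overline{f}_0-l)$. Now I substitute the definition $\eta=\theta(\overline{f}_0-l)/(2D)$ from \eqref{etadef}: this gives $\frac{\theta(\overline{f}_0-l)}{2D}D_{v,Y}>\frac{\theta}{2}(\overline{f}_0-l)$, and after cancelling the common positive factor $\frac{\theta}{2}(\overline{f}_0-l)$ we obtain $D_{v,Y}/D>1$, i.e.\ $D<D_{v,Y}$. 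Since the output is $D^+=2D<2D_{v,Y}$, both claims follow.

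The main obstacle, modest in this case, is confirming that the common factor $\frac{\theta}{2}(\overline{f}_0-l)$ is strictly positive so that the cancellation in part b) is valid; this requires $\overline{f}_0>l$, which holds because $l=\beta\cdot\lb+(1-\beta)\overline{f}_0$ with $\beta\in(0,1)$ and $\lb<\overline{f}_0$ (otherwise the gap would already be nonpositive and the FUSL method would have terminated). I would note this positivity explicitly before cancelling. The remaining steps are routine substitutions, so the heart of the argument is simply recognizing that the gap $f-f_{\eta}$ forced by the step \ref{stepGFUSL2D} condition is incompatible with the smoothing bound unless the current estimate $D$ underestimates $D_{v,Y}$.
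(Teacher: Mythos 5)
Your proposal is correct and follows essentially the same route as the paper: part a) is handled exactly as in Lemma~\ref{lemq} by splitting into the two termination cases, and part b) subtracts the two step~\ref{stepGFUSL2D} conditions, applies \eqref{USLsmoothrelation} and the definition \eqref{etadef} of $\eta$ to conclude $D<D_{v,Y}$ and hence $D^+=2D<2D_{v,Y}$. Your explicit note that $\overline{f}_0-l=\beta(\overline{f}_0-\lb)>0$ justifies the cancellation the paper performs implicitly, but adds nothing beyond the paper's argument.
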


\begin{proof}
The proof of part a) is as the same as that of Lemma \ref{lemq}, and we only show part b) here. By the termination condition at step \ref{stepGFUSL2D}, we have $f(x_k^u)>l+\theta(\ub-l)$ and $f_{\eta}(x_k^u)\le l+\frac{\theta}{2}(\ub-l)$. So,
$$f(x_k^u)-f_{\eta}(x_k^u)>\frac{\theta}{2}(\ub-l).$$
We conclude from the above relation, \eqref{USLsmoothrelation}, and \eqref{etadef} that
$$D_{v,Y}\ge\frac{f(x_k^u)-f_{\eta}(x_k^u)}{\eta}>\frac{\theta(\ub-l)}{2\eta} = D.$$
Finally, $D^+<2D_{v,Y}$ comes immediately from the above relation and the definition of $D^+$ in step \ref{stepGFUSL2D}.
\end{proof}

The following results provides a bound on the number of iterations performed by procedure $\mathcal{G}_{FUSL}$.
\begin{proposition}
\label{Prop_FUSL_GAP}
Suppose that $\{\alpha_k\}_{k\ge 1}$ in procedure $\mathcal{G}_{FUSL}$ are chosen such that \eqref{stepsize} holds.  Then,
the number of iterations performed by  this procedure does not exceed
\begin{equation}
\label{FUSLiteration}
\overline{N}(\Delta, D):=R\sqrt{\frac{cL_{\hat{f}}}{\theta\beta\Delta}}+\frac{\sqrt{2}R\|A\|}{\theta\beta\Delta}\sqrt{\frac{cD}{\sigma_v}}+1,
\end{equation}
where $\Delta:=f(\hat x)-\lb$.
\end{proposition}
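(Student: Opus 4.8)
The plan is to mirror the proof of Proposition~\ref{prop_FAPL_gap}, adapting it to the smoothed objective $f_\eta$ in place of $f$. The entire geometric machinery from the FAPL analysis carries over verbatim: the localizers $\underline{Q}_k, Q_k, \overline{Q}_k$ are defined identically, the prox-center is still $x_k=\argmin_{x\in\underline{Q}_k}\tfrac12\|x-\overline{x}\|^2$, and the same argument shows $x_k=\argmin_{x\in Q_k}d(x)$, hence $\langle\nabla d(x_k),x_{k+1}-x_k\rangle\ge 0$. By strong convexity of $d$ with modulus $1$, I would again telescope to obtain the bound \eqref{Bounddk}, namely $\tfrac12\sum_{\tau=1}^k\|x_{\tau+1}-x_\tau\|^2\le d(x_{k+1})\le\tfrac12 R^2$. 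None of this depends on which convex function generated the cuts, so it applies directly to $f_\eta$.

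The essential new ingredient is the accuracy estimate. Here I would invoke the analogue of relation (3.14) from \cite{Lan13-1}, but now applied to the \emph{smooth} function $f_\eta$, whose gradient is Lipschitz with constant $L_\eta=L_{\hat f}+\|A\|^2/(\eta\sigma_v)$ by \eqref{eqnLf} and \eqref{eqnLeta}. Since $f_\eta$ is smooth, the relevant smoothness exponent is $\rho=1$, so the condition \eqref{stepsize} gives $\gamma_k\|\tau_k(1)\|_{\infty}\le c k^{-2}$ and the accuracy bound reads
\begin{equation}
f_\eta(x_k^u)-l\le\frac{L_\eta}{2}[2d(x_k)]\,\gamma_k\|\tau_k(1)\|_{\infty}\le L_\eta R^2\cdot\frac{c}{k^2},
\end{equation}
using $2d(x_k)=\|x_k-\overline{x}\|^2\le R^2$. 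On the other hand, if the procedure has not terminated at iteration $k$, then in particular condition \ref{stepGFUSL2D}) failed, so $f_\eta(x_k^u)>l+\tfrac{\theta}{2}(\overline{f}_0-l)=l+\tfrac{\theta}{2}\beta\Delta$, which forces $f_\eta(x_k^u)-l>\tfrac{\theta\beta\Delta}{2}$. Combining the two inequalities yields $\tfrac{\theta\beta\Delta}{2}<cL_\eta R^2/k^2$, i.e. $k<R\sqrt{2cL_\eta/(\theta\beta\Delta)}$.

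To finish, I would substitute $L_\eta=L_{\hat f}+\|A\|^2/(\eta\sigma_v)$ and the definition $\eta=\theta(\overline{f}_0-l)/(2D)=\theta\beta\Delta/(2D)$ from \eqref{etadef}, giving $\|A\|^2/(\eta\sigma_v)=2D\|A\|^2/(\theta\beta\Delta\sigma_v)$. Using the elementary subadditivity $\sqrt{a+b}\le\sqrt{a}+\sqrt{b}$ to split the two contributions then produces the two summands in \eqref{FUSLiteration}: the first term $R\sqrt{cL_{\hat f}/(\theta\beta\Delta)}$ from $L_{\hat f}$, and the second term $\tfrac{\sqrt2 R\|A\|}{\theta\beta\Delta}\sqrt{cD/\sigma_v}$ from the $\|A\|^2/(\eta\sigma_v)$ piece; adding $1$ accounts for the ceiling. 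The main obstacle is a bookkeeping one rather than a conceptual one: I must be careful that the lower bound $f_\eta(x_k^u)>l+\tfrac{\theta}{2}(\overline{f}_0-l)$ used for nontermination comes from the \emph{step \ref{stepGFUSL2D}} threshold (the $\tfrac{\theta}{2}$ gap on the smoothed function), not the step \ref{stepGFUSLD}) threshold, and that the smoothing parameter $\eta$ is held fixed throughout the procedure so that $L_\eta$ is constant across iterations, which is exactly why the estimate above is legitimate.
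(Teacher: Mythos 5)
Your proposal follows exactly the same route as the paper's proof: reuse the FAPL geometric estimate \eqref{Bounddk}, apply the smooth ($\rho=1$) case of the accuracy relation to $f_\eta$ with total Lipschitz constant $L=L_{\hat f}+\|A\|^2/(\eta\sigma_v)$, invoke the step~3\ref{stepGFUSL2D}) non-termination threshold $f_\eta(x_k^u)-l>\tfrac{\theta\beta\Delta}{2}$, substitute $\eta=\theta\beta\Delta/(2D)$, and split by subadditivity of the square root. The one flaw is arithmetic: in the chain $\tfrac{L}{2}[2d(x_k)]\gamma_k\|\tau_k(1)\|_\infty = L\,d(x_k)\gamma_k\|\tau_k(1)\|_\infty \le \tfrac{cLR^2}{2k^2}$ you discarded the factor $\tfrac12$ coming from $d(x_k)\le R^2/2$ and wrote $cLR^2/k^2$ instead; carried through, this gives $k<R\sqrt{2cL/(\theta\beta\Delta)}$ and hence a final bound $\sqrt{2}$ times larger than \eqref{FUSLiteration} in both terms. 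Keeping the half (as in \eqref{USLpfrelation}) makes it cancel against the $\tfrac{\theta\beta\Delta}{2}$ on the other side, yielding $k<R\sqrt{cL/(\theta\beta\Delta)}$ and exactly the stated constants; with that one-line correction your argument coincides with the paper's.
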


\begin{proof}
It is easy to see that the gradient of $f_{\eta}$ in \eqref{USLpro} has Lipschitz continuous gradient with constant $L=L_{\hat{f}}+L_{\eta}$, where $L_\eta$ and $L_{\hat{f}}$ are defined in \eqref{eqnLeta} and \eqref{eqnLf}, respectively.
Suppose that procedure $\mathcal{G}_{FUSL}$ does not terminate at step $k$.
Noting that the prox-function $d(x)$ in procedure $\mathcal{G}_{FUSL}$ has modulus $1$, similarly to the discussion on \eqref{pfrelation}, we have
\begin{equation}
\label{USLpfrelation}
f_{\eta}(x_k^u)-l\le\frac{cL d(x_k)}{k^2}\le\frac{cLR^2}{2k^2},
\end{equation}
where $c$ is defined in \eqref{stepsize}, and the second inequality is from \eqnok{Bounddk}.
Also, since procedure $\mathcal{G}_{FUSL}$ does not terminate, in view of the termination condition at step \ref{stepGFUSL2D} and the definition of $l$ in step \ref{stepGFUSLinit}, we have
\begin{equation}
f_{\eta}(x_k^u)-l>\frac{\theta\beta\Delta}{2}.
\end{equation}
Combining the above two relations, and noting \eqref{eqnLeta} and \eqref{etadef}, we conclude that
\begin{equation}
k\le\sqrt{\frac{cLR^2}{\theta\beta\Delta}}\le R\sqrt{\frac{cL_{\hat{f}}}{\theta\beta\Delta}}+\frac{\sqrt{2
}R\|A\|}{\theta\beta\Delta}\sqrt{\frac{cD}{\sigma_v}}.
 \end{equation}
\end{proof}

We are now ready to describe the FUSL method which iteratively calls procedure  $\mathcal{G}_{FUSL}$ to solve
the structured saddle point problem \eqnok{eqnCCP}-\eqnok{USLpro}.

\begin{algorithm}[h]
	\caption{\label{algFUSL}{The fast uniform smoothing level (FUSL) method}}
\begin{algorithmic}[1]
		\makeatletter
		\setcounter{ALG@line}{-1}
		\makeatother
\State \label{stepFUSLinit}Given ball $B(\overline{x}, R)$, choose initial point $p_0\in B(\overline{x}, R)$, prox-function $v(\cdot)$ for the smoothing function $F_{\eta}$ in \eqref{USLypro} and \eqref{eqnV}, initial guess $D_1$ on the size $D_{v,Y}$ in \eqref{defDvY}, tolerance $\varepsilon>0$, and parameters $\beta,\theta \in(0,1)$.
\State \label{stepFUSLinitbound}Set $p_1\in \Argmin_{x\in B(\overline{x}, R)}\ h(p_0,x)$, $\lb_1=h(p_0,p_1), \ub_1=\min\{f(p_0),f(p_1)\}$, let $\hat{x}_1$ be either $p_0$ or $p_1$ such that $f(\hat{x}_1)=\ub_1$,  and $s=1$.
\State \label{stepFUSLmain}If $\ub_s-\lb_s\leq \epsilon$, \textbf{terminate} and \textbf{output} approximate solution $\hat{x}$.
\State \label{stepFUSLcall}Set $(\hat{x}_{s+1},D_{s+1}, \lb_{s+1})=\mathcal{G}_{FUSL}(\hat{x}_s, D_s,  \lb_s, R, \overline{x},\beta,\theta)$ and $\ub_{s+1}=f(\hat{x})$.
\State Set $s=s+1$ and go to step \ref{stepFUSLmain}.

\end{algorithmic}
\end{algorithm}

Similar to the FAPL method, we say that a phase of the FUSL method occurs when $s$ increases by $1$.
More specifically,
similar to the USL method, we classify two types of phases in the FUSL method.
A phase is called \emph{significant} if the corresponding $\mathcal{G}_{FUSL}$ procedure terminates
at steps \ref{stepGFUSLp} or \ref{stepGFUSLD}, otherwise it is called \emph{non-significant}.
Clearly, if the value of $D_{v,y}$ is provided, which is the assumption made in Nesterov's smoothing scheme \cite{Nest05-1},
then we can set $D_1=D_{v,Y}$ in the scheme of both the original and modified FUSL method, and consequently, all the phases of both the original and modified FUSL methods become significant.

For the sake of simplicity, an iteration of procedure $\mathcal{G}_{FUSL}$ is also referred to an iteration of the
FUSL method. The following result establishes a bound on
the total number of iterations performed by the FUSL method to find an $\epsilon$-solution
of problem \eqnok{eqnCCP}-\eqnok{USLpro}.  Note that the proof of these results
is similar to that of Theorem 7 in \cite{Lan13-1}.

\begin{theorem}
\label{thmFUSL}
Suppose that $\{\alpha_k\}$ in procedure $\mathcal{G}_{FUSL}$ are chosen such that \eqref{stepsize} holds. Then,
the total number of iterations performed by the FUSL method for computing an $\epsilon$-solution of problem \eqnok{eqnCCP}-\eqnok{USLpro} is bounded by
\begin{equation}\label{eqnFUSL}
\overline{N}(\epsilon):=S_1+S_2+(\frac{2}{\sqrt 2 -1}+\frac{\sqrt 2}{1- q} )\frac{R\|A\|}{\theta\beta\epsilon}\sqrt{\frac{c\tilde{D}}{\sigma_v}}
+(S_1+\frac{1}{1-\sqrt q})R\sqrt{\frac{cL_{\hat f}}{\theta\beta\epsilon}},
\end{equation}
where $q$ and $D_{v,Y}$ are defined in \eqref{qdef} and \eqref{defDvY} respectively, and
\begin{equation}
\tilde{D}:=\max\{D_1,2D_{v,Y}\}, S_1:=\max\left\{\left\lceil \log_2\frac{D_{v,Y}}{D_1} \right\rceil, 0\right\} \text{\ and\ }S_2:=\left\lceil \log_{\frac{1}{q}}\frac{4\sqrt{2}R\|A\|\sqrt{\frac{D_{v,Y}}{\sigma_v}}+2R^2L_{\hat{f}}}{\epsilon} \right\rceil.
\label{defS1S2D}
\end{equation}

\end{theorem}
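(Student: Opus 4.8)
The plan is to reuse the two-phase-type accounting familiar from the USL analysis: classify each phase of the FUSL method as \emph{significant} (procedure $\mathcal{G}_{FUSL}$ exits at step~\ref{stepGFUSLp} or~\ref{stepGFUSLD}) or \emph{non-significant} (exit at step~\ref{stepGFUSL2D}), bound the number of phases of each type, and then bound the total iteration count as $\sum_s \overline{N}(\Delta_s, D_s)$ using Proposition~\ref{Prop_FUSL_GAP}, where $\Delta_s = \ub_s - \lb_s$ and $D_s$ is the smoothing estimator entering phase $s$. First I would record the monotonicity facts driving everything: the gap $\Delta_s$ is non-increasing across all phases (the lower bound never decreases and $\overline{f}_k \le \overline{f}_0$), it contracts by the factor $q$ exactly on significant phases by part a) of Lemma~\ref{lemSigPhase}, while $D_s$ is unchanged on significant phases and is doubled on non-significant ones.

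Next I would count phases of each type. For non-significant phases, part b) of Lemma~\ref{lemSigPhase} gives $D_s < D_{v,Y}$ whenever step~\ref{stepGFUSL2D} is reached; since $D$ starts at $D_1$ and doubles at each such phase, the $j$-th non-significant phase has $D_s = 2^{j-1}D_1$, and the constraint $2^{j-1}D_1 < D_{v,Y}$ caps their number at $S_1$. For significant phases I would first bound the initial gap via $\Delta_1 \le f(p_1) - h(p_0,p_1)$ from step~\ref{stepFUSLinitbound}, splitting $f = \hat f + F$: the smooth part contributes at most $\tfrac{L_{\hat f}}{2}(2R)^2 = 2R^2 L_{\hat f}$ by \eqref{eqnLf}, and the saddle part $F$ contributes at most $\|A\|\,\|p_1-p_0\|\sqrt{2D_{v,Y}/\sigma_v}$, after bounding the diameter of $Y$ by $\sqrt{2D_{v,Y}/\sigma_v}$ (strong convexity of $v$) and using $\|p_1-p_0\|\le 2R$. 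This yields $\Delta_1 \le 2R^2 L_{\hat f} + 4\sqrt 2\,R\|A\|\sqrt{D_{v,Y}/\sigma_v}$, so that, since only significant phases shrink the gap (by $q$) and we stop once $\Delta_s \le \epsilon$, the number of significant phases is at most $\log_{1/q}(\Delta_1/\epsilon) \le S_2$. The total phase count is thus at most $S_1 + S_2$, which absorbs the leading $S_1+S_2$ and the $+1$ in each application of $\overline{N}$.

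It then remains to sum the two nontrivial terms of $\overline{N}(\Delta_s,D_s)$. For the $L_{\hat f}$-term, depending only on $\Delta_s$, I would split $\sum_s \Delta_s^{-1/2}$: on significant phases the gaps decrease geometrically so that $\sum \Delta_s^{-1/2} < \big(\sqrt\epsilon\,(1-\sqrt q)\big)^{-1}$, and the at-most-$S_1$ non-significant phases each satisfy $\Delta_s > \epsilon$, giving $< S_1/\sqrt\epsilon$; this reproduces the factor $S_1 + (1-\sqrt q)^{-1}$. For the $\|A\|$-term, carrying the extra factor $\sqrt{D_s}$, on significant phases I would bound $D_s \le \tilde D$ and use $\sum \Delta_s^{-1} < \big(\epsilon(1-q)\big)^{-1}$ to obtain the $\tfrac{\sqrt 2}{1-q}$ contribution, while on non-significant phases I would keep the exact values $D_s = 2^{j-1}D_1$ with $\Delta_s > \epsilon$, so that $\sum_j \sqrt{2^{j-1}D_1} < \sqrt 2\,\sqrt{D_{v,Y}}/(\sqrt 2 - 1)$ follows from $2^{j-1}D_1 < D_{v,Y}$ and summing the geometric series; bounding $\sqrt{D_{v,Y}} \le \sqrt{\tilde D}$ yields the $\tfrac{2}{\sqrt 2-1}$ contribution. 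Collecting the four pieces gives exactly \eqref{eqnFUSL}.

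The main obstacle I anticipate is the $\|A\|$-term on the non-significant phases: bounding $\sqrt{D_s}$ crudely by $\sqrt{\tilde D}$ destroys the geometric structure and produces a spurious $S_1$ factor, so one must retain the explicit doubling $D_s = 2^{j-1}D_1$ and combine it with $2^{j-1}D_1 < D_{v,Y}$ to collapse $\sum_j(\sqrt 2)^{j-1}$ into the closed constant $\tfrac{2}{\sqrt 2-1}$. A secondary delicate point is the initial-gap estimate, which cannot invoke smoothness of $f$ directly since $f$ is non-smooth, and must instead treat $\hat f$ and $F$ separately.
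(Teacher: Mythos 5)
Your proposal is correct and follows essentially the same route as the paper's proof: the same split into significant and non-significant phases, the same phase counts $S_1$ and $S_2$ (with the same initial-gap estimate $\Delta_1\le 2R^2L_{\hat f}+4\sqrt2\,R\|A\|\sqrt{D_{v,Y}/\sigma_v}$), and the same geometric summations of $\overline{N}(\Delta_s,D_s)$ — your anchoring of the doubling sequence at $D_1$ going up is equivalent to the paper's anchoring at $\tilde D$ going down. The only blemish is a harmless factor-of-two mismatch between your intermediate saddle-part estimate ($2\sqrt2\,R\|A\|\sqrt{D_{v,Y}/\sigma_v}$) and the stated conclusion ($4\sqrt2\,\cdots$), which only makes your bound stronger than needed.
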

\begin{proof}
We prove this result by estimating the numbers of iterations performed within both non-significant and significant phases. Suppose that the set of indices of the non-significant and significant phases are  $\{m_1,m_2,\ldots,m_{s_1}\}$ and $\{n_1,n_2,\ldots,n_{s_2}\}$ respectively. For any non-significant phase $m_k$, $1\le k\le s_1$, we can easily see from step \ref{stepGFUSL2D} that $D_{m_{k+1}}=2D_{m_k}$, by part b) in Lemma \ref{lemSigPhase}, the number of non-significant phases performed by the FUSL method is bounded by $S_1$ defined above, i.e., $s_1\le S_1$.

In addition, since $D_{m_{s_1}}\le \tilde{D}$, we have $D_{m_k}\le (1/2)^{s_1-k}\tilde{D}$, where $\tilde{D}$ is defined above. Combining the above estimates on $s_1$ and $D_{m_k}$, and in view of the fact $\Delta_{m_k}>\epsilon$ for all $1\le k\le s_1$,  we can bound the number of iterations performed in non-significant phases by
\begin{align}
\label{defN1}
\begin{aligned}
\overline{N}_1&=\sum_{k=1}^{s_1}\overline{N}(\Delta_{m_k},D_{m_k})\le\sum_{k=1}^{s_1}\overline{N}\left(\epsilon,\tilde{D}/2^{s_1-k}\right)\\
              & \le S_1\left(R\sqrt{\frac{cL_{\hat{f}}}{\theta\beta\epsilon}}+1\right)+\frac{\sqrt{2}R\|A\|}{\theta\beta\epsilon}\sqrt{\frac{c}{\sigma_v}}\sum_{k=1}^{S_1}\sqrt{\frac{\tilde{D}}{2^{S_1-k}}}\\
              &\le S_1\left(R\sqrt{\frac{cL_{\hat{f}}}{\theta\beta\epsilon}}+1\right)+\frac{2R\|A\|}{(\sqrt{2}-1)\theta\beta\epsilon}\sqrt{\frac{c\tilde{D}}{\sigma_v}}.
\end{aligned}
\end{align}
Applying Lemma 8 in \cite{Lan13-1} and relation \eqref{eqnLf}, and in view of the fact that $p_0, p_1\in B(\overline{x},R)$ in Algorithm \ref{algFUSL}, the initial gap is bounded as
\begin{align}
\Delta_1&:=\ub_1-\lb_1\le\left[F(p_0)-F(p_1)-\left\langle F'(p_1),p_0-p_1\right\rangle \right]+\left[\hat{f}(p_0)-\hat{f}(p_1)-\left\langle \hat{f}'(p_1),p_0-p_1\right\rangle \right]\\
&\le 4\sqrt{2}R\|A\|\sqrt{\frac{D_{v,Y}}{\sigma_v}}+2R^2L_{\hat{f}},
\end{align}
where $F'(p_1)\in \partial F(p_1)$. Then for significant phases, similarly to the proof of Theorem \ref{thmFAPL}, we have $s_2\le S_2$. Moreover, for any $n_k$, $1\le k\le s_2$, using Lemmas \ref{lemq}, \ref{lemSigPhase}, we have $D_{n_k}\le \tilde{D}$, $\Delta_{n_{k+1}}\le q\Delta_{n_{k}}$, and $\Delta_{n_{s_2}}> \epsilon$, which implies $\Delta_{n_k}>\epsilon/q^{s_2-k}$.
Combining such an estimate on $D_{n_k}, \Delta_{n_k}$ and bound on $s_2$, we can see that the total number of iterations performed the significant phases is bounded by
\begin{align}
\label{defN2}
\begin{aligned}
\overline{N}_2&=\sum_{k=1}^{s_2}\overline{N}(\Delta_{n_k},D_{n_k})\le\sum_{k=1}^{s_2}\overline{N}(\epsilon/q^{s_2-k}, \tilde{D})\\
              &\le S_2+R\sqrt{\frac{cL_{\hat{f}}}{\theta\beta\epsilon}}\sum_{k=1}^{S_2}q^{\frac{S_2-k}{2}}+\frac{\sqrt{2}R\|A\|}{\theta\beta\epsilon}\sqrt{\frac{c \tilde{D}}{\sigma_v}}\sum_{k=1}^{S_2}q^{S_2-k}\\
              & \le S_2+\frac{R}{1-\sqrt{q}}\sqrt{\frac{cL_{\hat{f}}}{\theta\beta\epsilon}}+\frac{\sqrt{2}R\|A\|}{\theta\beta\epsilon(1-q)}\sqrt{\frac{c\tilde{D}}{\sigma_v}}.
\end{aligned}
\end{align}
Finally, the total number of iterations performed by the FUSL method is bounded by $\overline{N}_1+\overline{N}_2$, and thus \eqref{eqnFUSL} holds.
\end{proof}

\vgap

From \eqref{eqnFUSL} in the above theorem, we can see that the iteration complexity of the FUSL method for solving problem \eqnok{eqnCCP}-\eqnok{USLpro} is bounded by
\begin{align}
	\cO\left(\sqrt{\frac{L_{\hat{f}}}{\epsilon}} + \frac{\|A\|}{\epsilon}\right).
\end{align}
The above iteration complexity is the same as that of the Nesterov smoothing scheme in \cite{Nest05-1} and the USL method in \cite{Lan13-1}. However, both the USL and FUSL methods improve Nesterov's smoothing scheme in that both of them are problem parameter free.
In addition, as detailed in Subsection~\ref{secKKT} below, the FUSL method further improves the USL method by
significantly reducing its iteration cost and improving the accuracy for solving its subproblems.

\subsection{Solving the subproblems of FAPL and FUSL}
\label{secKKT}
In this section, we introduce an efficient method to solve the subproblems \eqref{FAPLsub} in the FAPL and FUSL methods, which are given in the form
of
\begin{equation}
\label{subsolver}
x_c^{\ast}:=\argmin_{x\in Q}\frac{1}{2}\|x-p\|^2.
\end{equation}
Here, $Q$ is a closed polyhedral set described by $m$ linear inequalities, i.e.,
$$Q:=\{x\in \R^n: \left\langle A_i,x\right\rangle \le b_i, \  i=1,2,\ldots,m\}.$$

Now let us examine the Lagrange dual of \eqnok{subsolver} given by
\begin{equation}
\max_{\lambda\ge 0} \min_{x \in \bbr^n} \frac{1}{2}\|x-p\|^2+\sum_{i=1}^{m}\lambda_i[\left\langle A_{i},x\right\rangle -b_i]. \label{Lag_dual0}
\end{equation}
It can be checked from the theorem of alternatives that
problem \eqref{Lag_dual0} is solvable if and only if $Q \neq \emptyset$.
Indeed, if $Q \neq \emptyset$, it is obvious that the optimal value of \eqnok{Lag_dual0}
is finite. On the other hand, if $Q = \emptyset$, then there exists $\bar{\lambda} \ge 0$ such that
$\bar{\lambda}^T A = 0$ and $\bar {\lambda}^T b < 0$, which implies that
the optimal value of \eqnok{Lag_dual0} goes to infinity. Moreover,
if \eqnok{Lag_dual0} is solvable and $\lambda^*$ is one of its optimal dual solutions,
then
\begin{equation}
\label{subxsol}
x_c^{\ast}=p-\sum\limits_{i=1}^{m}\lambda_i^{\ast}A_i.
\end{equation}
It can also be easily seen that \eqnok{Lag_dual0} is equivalent to
\beq
\max_{\lambda \ge 0} - \frac{1}{2}\lambda^T M \lambda + C^T\lambda, \label{lambdapro}
\eeq
where
$
M_{ij}:=\left\langle A_i,A_j\right\rangle ,\ C_i:=\left\langle A_i,p\right\rangle -b_i, \ \forall i,j=1,2,\ldots,m.
$
Hence, we can determine the feasibility of \eqnok{subsolver} or compute its optimal solution
by solving the relatively simple problem in \eqnok{lambdapro}.

Many algorithms are capable of solving the above nonnegative quadratic programming in \eqnok{lambdapro} efficiently.
Due to its low dimension (usually less than $10$ in our practice), we propose a brute-force method to compute
the exact solution of this problem.
Consider the Lagrange dual associated with \eqref{lambdapro}:
$$\min_{\lambda\ge 0}\max_{\mu\ge 0 }\cL(\lambda,\mu):= \frac{1}{2}\lambda^T M \lambda - (C^T+\mu)\lambda,$$
where the dual variable is $\mu:=(\mu_1,\mu_2,\ldots,\mu_m)$. Applying the KKT condition, we can see that $\lambda^{\ast}\ge 0$ is a solution of problem \eqref{lambdapro} if and only if there exists $\mu^*\ge 0$ such that
\begin{align}
\label{lambdamupro}
\nabla_{\lambda} \cL(\lambda^{\ast},\mu^{\ast})=0 \ \ \ \mbox{and} \ \ \  \langle \lambda, \mu\rangle=0.
\end{align}
Note that the first identity in \eqref{lambdamupro} is equivalent to a linear system:
\begin{align}
\begin{pmatrix}
\label{sublinearsystem}
M & -I\end{pmatrix}\begin{pmatrix}\lambda_1\\ \vdots\\\lambda_m\\\mu_1\\ \vdots\\ \mu_m\end{pmatrix}=\begin{pmatrix}b_1\\b_2\\ \vdots\\ b_m
\end{pmatrix},
\end{align}
where $I$ is the $m\times m$ identity matrix. The above linear system has $2m$ variables and $m$ equations. But for
any $i=1,\ldots,m$, we have either $\lambda_i=0$ or $\mu_i=0$, and hence we only need to consider $2^m$ possible cases
on the non-negativity of these variables. Since $m$ is rather small in practice, it is possible to exhaust all these $2^m$ cases to
find the exact solution to \eqref{lambdamupro}. For each case, we first remove the $m$ columns in the matrix $(M \ -I)$ which correspond to the $m$ variables assumed to be $0$, and then solve the remaining determined linear system. If all variables of the
computed solution are non-negative, then solution $(\lambda^{\ast},\mu^{\ast})$ to \eqref{lambdamupro} is found, and the exact solution $x_c^*$ to \eqref{subsolver} is computed by \eqref{subxsol}, otherwise, we continue to examine the next case.
It is interesting to observe that these different cases can also be considered in parallel
to take the advantages of high performance computing techniques.

\subsection{Extending FAPL and FUSL for unconstrained problems}
\label{secunconstraint}
In this subsection,
we study how to utilize the FAPL and FUSL method to solve
the unconstrained problems based on our results in Section~\ref{secCCP}.

Let us first consider the case when $f$ in \eqnok{oriproblem} satisfies \eqnok{smoothrelation}.
If the method $\mathcal{A}$ in step 1 of Algorithm~\ref{algOracle} is given by
the FAPL method, then by Theorem \ref{thmFAPL}, the number of evaluations
of $f'$ within one call to $\mathcal{A}(\overline{x},2r_k,\Delta_k)$ is bounded by
\begin{align}
	 \left[\frac{cM_k(2r_k)^{1+\rho_k}}{(1+\rho_k)\theta\beta\Delta_k}\right]^{\frac{2}{1+3\rho_k}},
\end{align}
where $c$ is a universal constant,
	$M_k:=M(B(\overline{x},2r_k))$ and $\rho_k:=\rho(B(\overline{x},2r_k))$
are constants corresponding to the assumption in \eqref{smoothrelation}.
By Theorem \ref{unconstraintheorem}, the number of evaluations of $f'$ up to the $k$-th iteration of Algorithm \ref{algOracle} is bounded by
\begin{align}
	 \cO\left(\left[\frac{M(4D^{\ast})^{1+\rho}}{\epsilon_k}\right]^{\frac{2}{1+3\rho}}\right),
\end{align}
where
$M:=M(B(\overline{x},4D^{\ast}))$, $\rho:=\rho(B(\overline{x},4D^{\ast}))
$ and $\epsilon_k:=f(x_k)-f^{\ast}$ is the accuracy of the solution.
It should be noted that the constants $M$ and $\rho$ are local constants that depend on the distance from $\overline{x}$ and $x^*$, which are not required for the FAPL method and Algorithm \ref{algOracle}, and also generally smaller than the constants $M(\R^n)$ and $\rho(\R^n)$, respectively, for the global H\"older continuity condition.

Moreover, if $f$ in \eqref{oriproblem} is given in the form of \eqref{USLpro} as a structured nonsmooth CP problem, then the FUSL method could be applied to solve the corresponding structured ball-constraint problem in Algorithm \ref{algOracle}. By Theorem \ref{thmFUSL}, the number of evaluations of $f'$ within one call to $\mathcal{A}(\overline{x},2r_k,\Delta_k)$ is bounded by
\begin{equation}
 S_1+S_2+2r_kC'\sqrt{\frac{L_{\hat{f}}}{\Delta_k}}+\frac{2r_kC''\|A\|}{\Delta_k},
\end{equation}
where $C',C''$ are some constants depending on the parameters $q,\theta,\beta,\sigma_v,D_0$ and $D_{v,Y}$ in the FUSL method.

Applying Theorem \ref{unconstraintheorem} with $\alpha_1=\alpha_2 =1$, $\beta_1=\frac{1}{2}$, $\beta_2=1$, 
$C_1(\overline{x},R,f)=2C' \sqrt{L_{\hat{f}}}$,  and $C_2(\overline{x},R,f)=2C''\|A\|$,
the number of evaluations of $f'$ up to the $k$-th iteration of Algorithm \ref{algOracle} is bounded by
\begin{equation}
  \mathcal{O}\left( 4D^{\ast}C'\sqrt{\frac{L_{\hat{f}}}{\epsilon_k}}+\frac{4C''D^{\ast}\|A\|}{\epsilon_k} \right).
\end{equation}

Similar to the FAPL method, here $L_f:=L_f(B(\overline{x},4D^{\ast}))$ is a lower bound of $L_f(\R^n)$.

\setcounter{equation}{0}
\section{Generalization to strongly convex optimization}
\label{secStronglyconvex}
In this section, we generalize the FAPL and FUSL methods for solving convex optimization problems
in the form of \eqnok{oriproblem} whose objective function $f$ satisfies 
\begin{equation}
\label{eqnmu}
f(y)-f(x)-\left\langle f'(x),y-x\right\rangle \ge \frac{\mu}{2}\|y-x\|^2, \  \ \forall x,y \in \R^n,
\end{equation}
for some $\mu > 0$.
For the sake of simplicity, we assume throughout this section that an initial
lower bound $\lb_0 \le f^{\ast}$ is available\footnote{Otherwise, we should incorporate a guess-and-check procedure
similar to the one in Section~\ref{secCCP}.}. Under this assumption, it follows from
\eqref{eqnmu} that $\|p_0 - x^*\|^2 \le 2 [f(p_0) - \lb_0] / \mu$ for a given initial point $p_0$,
and hence that the FAPL and FUSL methods for ball-constrained problems can be directly applied. However, since
the lower and upper bounds on $f^*$ are constantly improved
in each phase of these algorithms, we can shrink the ball constraints
by a constant factor once every phase accordingly. We show that
the rate of convergence of the FAPL and FUSL methods can be significantly
improved in this manner.

We first present a modified FAPL method for solving black-box CP problems which satisfy both \eqnok{smoothrelation}
and \eqnok{eqnmu}. More specifically, we modify the ball constraints used in the FAPL method by
shifting the prox-center $\bar x$ and shrinking the radius $R$ in procedure ${\mathcal{G}}_{FAPL}$.
Clearly, such a modification does not incur any extra computational cost.
This algorithm is formally described as follows.

\begin{procedure}[h]
\caption{\label{funFAPLsc} The modified FAPL gap reduction procedure: $(x^+, \lb^+)=\tilde{\mathcal{G}}_{FAPL}(\hat{x}, \lb, r, \beta,\theta)$}
In Procedure \ref{funFAPL}, set $\overline{x}=\hat{x}$, and consequently the prox-function $d$ in \eqnok{FAPLsub} is replaced by $\|x-\hat{x}\|^2/2$.
\end{procedure}

\begin{algorithm}[h]
	\caption{\label{algFAPLsc}{The modified FAPL method  for minimizing strongly convex functions}}
	In Algorithm \ref{algFAPL}, change steps \ref{stepFAPLinit}, \ref{stepFAPLinitbound} and \ref{stepFAPLcall} to
	\begin{algorithmic}[1]
		\makeatletter
		\setcounter{ALG@line}{-1}
		\makeatother
		\State \label{stepFAPLinit_sc}Choose initial lower bound $\lb_1\le f^*$, initial point $p_0\in \R^n$, initial upper bound $\ub_1=f(p_0)$, tolerance $\epsilon>0$ and parameters $\beta,\theta \in(0,1)$.
        \State Set $\hat{x}_1=p_0$, and $s=1$.
		\makeatletter
		\setcounter{ALG@line}{2}
		\makeatother
		\State Set $(\hat{x}_{s+1}, \lb_{s+1})=\tilde{\mathcal{G}}_{FAPL}(\hat{x}_{s}, \lb_s,\sqrt{{2(f(\hat x_s)-\lb_s)}/{\mu}},\beta,\theta)$ and  $\ub_{s+1}=f(\hat{x}_{s+1})$.
	\end{algorithmic}
\end{algorithm}

A few remarks on the above modified FAPL method are in place.
Firstly, let $x^*$ be the optimal solution of problem \eqref{oriproblem} and define $\Delta_s = \ub_s -\lb_s$.
By the definition of $\ub_s$ and $\lb_s$, we have
$f(\hat{x}_s)-f(x^{\ast})\le \Delta_s$, which, in view of \eqnok{eqnmu},
then implies that
\begin{equation}
\label{strongconvexconstrain}
\|\hat{x}_s-x^{\ast}\|^2\le \frac{2\Delta_s}{\mu}=:r^2,
\end{equation}
and $x^{\ast}\in B(\hat{x}_s,r)$.
Secondly,
similar to procedure ${\mathcal{G}}_{FAPL}$, if procedure $\tilde {\mathcal{G}}_{FAPL}$ terminates at step 2, we have $\cE_f(l)\cap B(\hat{x}_s,r)=\emptyset$. Combining this with the fact $x^{\ast}\in B(\hat{x}_s,r)$, we conclude that $l$ is a valid lower bound on $f^{\ast}$. Therefore, no matter whether procedure $\tilde {\mathcal{G}}_{FAPL}$ terminates at step 2 or step 4, the gap between upper and lower bounds on $f^{\ast}$ has been reduced and $\Delta_{s+1}\le q\Delta_s$, where the $q$ is defined in \eqref{qdef}.

\vgap

We establish in Theorem~\ref{thmStrongFAPL} the iteration complexity bounds of the
modified FAPL method for minimizing strongly convex functions.

\begin{theorem}
\label{thmStrongFAPL} Suppose that $\{\alpha_k\}_{k\ge 1}$ in procedure $\tilde{\mathcal{G}}_{FAPL}$ are chosen such that \eqref{stepsize} holds.
Then the total number of iterations performed by the modified FAPL method for computing an $\epsilon$-solution
of problem \eqnok{oriproblem} is bounded by\\
\[
\widetilde{S}\left(\sqrt{\frac{2cM}{\theta\beta\mu}}+1\right) \ \ \
\mbox{and} \ \ \
\widetilde{S}+  \frac{1}{1-q^{\frac{1-\rho}{1+3\rho}}}\left( \frac{4^{1+\rho}cM} {\theta\beta(1+\rho)\mu^{\frac{1+\rho}{2}}\epsilon^{\frac{1-\rho}{2}}}   \right)^{\frac{2}{1+3\rho}},
\]
respectively, for smooth strongly convex functions (i.e., $\rho = 1$) and
nonsmooth or weakly smooth strongly convex functions (i.e., $\rho \in [0,1)$),
where $q$ is defined in \eqref{qdef}, $\lb_1$ and $\ub_1$ are given initial lower bound and upper bound
on $f^*$, and
\begin{equation}
\label{eqnFAPLsc_phases}
\widetilde{S}:=\left\lceil \log_{\frac{1}{q}}\left( \frac{\ub_1-\lb_1}{\epsilon}\right) \right\rceil.
\end{equation}
\end{theorem}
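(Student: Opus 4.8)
The plan is to combine the per-phase iteration bound of Proposition~\ref{prop_FAPL_gap} with the geometric decay of the gap $\Delta_s := \ub_s - \lb_s$, exploiting the fact that in the modified procedure $\tilde{\mathcal{G}}_{FAPL}$ the ball radius shrinks together with $\Delta_s$. First I would count the number of phases. As explained in the remark following Algorithm~\ref{algFAPLsc}, strong convexity \eqref{eqnmu} guarantees $x^{\ast}\in B(\hat x_s, r_s)$ with $r_s = \sqrt{2\Delta_s/\mu}$, so that whenever $\tilde{\mathcal{G}}_{FAPL}$ terminates at step~2 the level $l$ is a valid lower bound on $f^{\ast}$; hence each phase reduces the gap by the factor $q$ and $\Delta_{s+1}\le q\Delta_s$, giving $\Delta_s\le q^{s-1}\Delta_1$. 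Requiring $\Delta_s\le\epsilon$ then shows that at most $\widetilde S=\lceil \log_{1/q}((\ub_1-\lb_1)/\epsilon)\rceil$ phases are performed.

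Next I would bound the iterations inside phase $s$. Since that phase runs $\tilde{\mathcal{G}}_{FAPL}$ with radius $r_s=\sqrt{2\Delta_s/\mu}$, applying Proposition~\ref{prop_FAPL_gap} with $R=r_s$ and $\Delta=\Delta_s$ yields $N_s\le (cMr_s^{1+\rho}/((1+\rho)\theta\beta\Delta_s))^{2/(1+3\rho)}+1$. The decisive step is to substitute $r_s^{1+\rho}=(2\Delta_s/\mu)^{(1+\rho)/2}$: the $\Delta_s$-dependence inside the base collapses to $\Delta_s^{(\rho-1)/2}$, so that $N_s$ is, up to a constant depending only on $c,M,\mu,\theta,\beta,\rho$, a constant multiple of $\Delta_s^{(\rho-1)/(1+3\rho)}$ plus one. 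This is exactly where strong convexity pays off: the radius contracts fast enough that the effective size $R^{1+\rho}/\Delta$ improves as the gap closes.

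The two regimes now separate. For smooth strongly convex $f$ ($\rho=1$) the exponent $(\rho-1)/(1+3\rho)$ vanishes, so $N_s$ is bounded by a constant independent of $s$; summing over the $\widetilde S$ phases gives the first bound $\widetilde S(\sqrt{2cM/(\theta\beta\mu)}+1)$. For $\rho\in[0,1)$ the exponent is negative, so $N_s$ grows as $\Delta_s$ decreases; here I would use $\Delta_s>\epsilon\, q^{s-\widetilde S}$, which follows from $\Delta_{\widetilde S}>\epsilon$ together with the geometric decay, to dominate $\sum_{s=1}^{\widetilde S}N_s$ by $\widetilde S$ plus a geometric series of ratio $q^{(1-\rho)/(1+3\rho)}<1$. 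Summing that series contributes the factor $1/(1-q^{(1-\rho)/(1+3\rho)})$ and is controlled by its last term at $\Delta\approx\epsilon$, producing the second bound.

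The main obstacle I anticipate is bookkeeping rather than conceptual. One must track the constants carefully through the substitution $r_s=\sqrt{2\Delta_s/\mu}$ and the fractional exponent $2/(1+3\rho)$ so that the stated expressions (in particular the powers of $2$ and of $\mu$, and the $\epsilon^{(1-\rho)/2}$ scaling) come out correctly, and one must set up the index shift $j=\widetilde S-s$ in the geometric sum so that it converges and its bound is the claimed $\epsilon$-dependent term. A secondary point to verify with care is the borderline validity claim $x^{\ast}\in B(\hat x_s, r_s)$ at the start of every phase, since it is precisely this inclusion that licenses replacing the fixed radius $R$ by the shrinking $r_s$ while keeping each updated level $l$ a legitimate lower bound.
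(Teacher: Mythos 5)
Your proposal is correct and follows essentially the same route as the paper: bound the number of phases by $\widetilde S$ via the geometric decay $\Delta_{s+1}\le q\Delta_s$, bound the iterations of phase $s$ by instantiating the gap-reduction analysis with the shrinking radius $r_s=\sqrt{2\Delta_s/\mu}$ so that the per-phase count scales as $\Delta_s^{(\rho-1)/(1+3\rho)}$, and then sum the resulting geometric series (constant in the smooth case $\rho=1$, ratio $q^{(1-\rho)/(1+3\rho)}$ otherwise). The bookkeeping you flag works out to the constant $2^{(1+\rho)/2}$ in the base, which is dominated by the $4^{1+\rho}$ appearing in the stated bound.
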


\begin{proof}
Suppose that procedure $\tilde{\mathcal{G}}_{FAPL}$ does not terminate at the $k^{th}$ inner iteration. It then follows from \eqref{pfrelation} and
\eqref{strongconvexconstrain} that
 \begin{equation}
f(x_k^u)-l\le\frac{Mr^{1+\rho}}{1+\rho}\cdot \frac{c}{k^{\frac{1+3\rho}{2}}}.
\end{equation}
Moreover,
in view of the termination condition at steps \ref{stepGFAPLub} and relation \eqref{strongconvexconstrain}, we have
$f(x_k^u)-l\ge \theta(\ub_s-l)=\theta\beta\Delta_s$  and  $r=\sqrt{2\Delta_s/\mu}$. Combining all the above observations we conclude that
\begin{equation}
  k\le \left( \frac{2^{\frac{1+\rho}{2}}cM} {\theta\beta(1+\rho)\mu^{\frac{1+\rho}{2}}\Delta_s^{\frac{1-\rho}{2}}}   \right)^{\frac{2}{1+3\rho}}.
\end{equation}
So the number of inner iterations performed in each call to  procedure $\tilde{\mathcal{G}}_{FAPL}$ is bounded by
\begin{equation}
\left( \frac{2^{\frac{1+\rho}{2}}cM} {\theta\beta(1+\rho)\mu^{\frac{1+\rho}{2}}\Delta_s^{\frac{1-\rho}{2}}}   \right)^{\frac{2}{1+3\rho}}+1.
\end{equation}
Since the gap between the upper and lower bounds on $f^{\ast}$ is reduced by a constant factor in each phase, i.e., $\Delta_{s+1}\le q\Delta_s$,
it easy to see that the total number of phases is bounded by $\tilde{S}$ defined above. Using
the previous two conclusions and the fact that $\Delta_s\ge \epsilon / q^{\widetilde{S}-s}$,
we can show that the total number of iterations performed by the modified FAPL method is bounded by
\begin{equation}
\widetilde{S}+  \left( \frac{2^{\frac{1+\rho}{2}}cM} {\theta\beta(1+\rho)\mu^{\frac{1+\rho}{2}}\epsilon^{\frac{1-\rho}{2}}}   \right)^{\frac{2}{1+3\rho}}\sum_{s=1}^{\widetilde{S}}q^{(\widetilde{S}-s)\frac{1-\rho}{1+3\rho}}.
\end{equation}
Specifically, if $f$ is smooth ($\rho=1$), then the above bound is reduced to
\begin{equation}
\widetilde{S}\left(\sqrt{\frac{2cM}{\theta\beta\mu}}+1\right).
\end{equation}
If $f$ is nonsmooth ($\rho=0$) or weakly smooth ($\rho\in (0,1)$), then the above bound is equivalent to
\begin{align}
\widetilde{S}+  \left( \frac{2^{\frac{1+\rho}{2}}cM} {\theta\beta(1+\rho)\mu^{\frac{1+\rho}{2}}\epsilon^{\frac{1-\rho}{2}}}   \right)^{\frac{2}{1+3\rho}}\sum_{s=1}^{\widetilde{S}}q^{(\widetilde{S}-s)\frac{1-\rho}{1+3\rho}}
  \le\ \widetilde{S}+  \frac{1}{1-q^{\frac{1-\rho}{1+3\rho}}}\left( \frac{2^{\frac{1+\rho}{2}}cM} {\theta\beta(1+\rho)\mu^{\frac{1+\rho}{2}}\epsilon^{\frac{1-\rho}{2}}}   \right)^{\frac{2}{1+3\rho}}.
\end{align}
\end{proof}

\vgap

Now let us consider the structured CP problems with $f$ given by  \eqref{USLpro},
where the smooth component $\hat{f}$ is strongly convex with modulus $\mu$.
Similar to the modified FAPL method, we present a modified FUSL method for solving
this strongly convex structured CP problems as follows.

\begin{procedure}[h]
\caption{\label{funFUSLsc} The modified FUSL gap reduction procedure: $(x^+,D^+, \lb^+)=\tilde{\mathcal{G}}_{FAPL}(\hat{x},D,\lb, r,\beta,\theta)$}
In Procedure \ref{funFUSL}, set $\overline{x}=\hat{x}$, and consequently the prox-function $d$ is replaced by $\|x-\hat{x}\|^2/2$.
\end{procedure}

\begin{algorithm}[h]
	\caption{\label{algFUSLsc}{The modified FUSL method for minimizing strongly convex functions}}
	In Algorithm \ref{algFUSL}, change steps \ref{stepFUSLinit},\ref{stepFUSLinitbound} and \ref{stepFUSLcall} to
	\begin{algorithmic}[1]
		\makeatletter
		\setcounter{ALG@line}{-1}
		\makeatother
		\State \label{stepFUSLinit_sc}Choose initial lower bound $\lb_1\le f^*$, initial point $p_0\in \R^n$, initial upper bound $\ub_1=f(p_0)$, prox-function $v(\cdot)$, initial guess $D_1$ on the size $D_{v,Y}$, tolerance $\epsilon>0$ and parameters $\beta,\theta \in(0,1)$.
        \State Set $\hat{x}_1=p_0$, and $s=1$.
		\makeatletter
		\setcounter{ALG@line}{2}
		\makeatother
		\State Set $(\hat{x}_{s+1},D_{s+1},\lb_{s+1})=\tilde{\mathcal{G}}_{FUSL}(\hat{x}_s, D_s,\lb_s, \sqrt{{2(f(\hat{x}_s)-\lb_s)}/{\mu}},\beta,\theta)$ and  $\ub_{s+1}=f(\hat{x}_{s+1})$.
	\end{algorithmic}
\end{algorithm}


In the following theorem, we describe the convergence properties of the modified FUSL method for solving \eqref{oriproblem}-\eqref{USLpro} with strongly convex smooth component $\hat{f}$.

\begin{theorem}
\label{thmStrongFUSL} Suppose that $\{\alpha_k\}_{k\ge 1}$ in procedure $\tilde{\mathcal{G}}_{FUSL}$ are chosen such that \eqref{stepsize} holds. Then we have the following statements hold for the modified FUSL method.
\begin{enumerate}[a)]
\item The total number of iterations performed by the modified FUSL method for computing an $\epsilon$-solution
of problem \eqnok{oriproblem}-\eqnok{USLpro} is bounded by
\begin{equation}
  (S_1+\widetilde{S})\left(\sqrt{\frac{2cL_{\hat{f}}}{\theta\beta\mu}}+1\right)+\frac{4\|A\|\sqrt{\tilde{D}}}{\theta\beta(1-\sqrt{q})}\sqrt{\frac{c}{\sigma_v\mu\epsilon}},
\end{equation}
where $q$ is defined in \eqref{stepsize}, $S_1$ and $\tilde{D}$ are defined in \eqref{defS1S2D}, and $\widetilde{S}$ is defined in \eqref{eqnFAPLsc_phases}.
\item In particular, if $D_{v,Y}$ is known, and set $D_1=D_{v,Y}$ at Step \ref{stepFUSLinit}, then the number of iterations performed by the modified FUSL method is reduced to
\begin{equation}
\label{eqnFUSLsc}
\overline{N}(\epsilon):= \widetilde{S}\left(\sqrt{\frac{2cL_{\hat{f}}}{\theta\beta\mu}}+1\right)+\frac{2\|A\|}{\theta\beta(1-\sqrt{q})}\sqrt{\frac{cD_{v,Y}}{\sigma_v\mu\epsilon}}.
\end{equation}
\end{enumerate}
\end{theorem}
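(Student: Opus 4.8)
The plan is to mirror the proof of Theorem~\ref{thmFUSL}, splitting the phases into non-significant and significant ones, but to exploit the shrinking ball radius $r_s=\sqrt{2(f(\hat x_s)-\lb_s)/\mu}=\sqrt{2\Delta_s/\mu}$ prescribed in the modified method, where I write $\Delta_s:=\ub_s-\lb_s$. First I would record, exactly as in the remark preceding Theorem~\ref{thmStrongFAPL}, that strong convexity \eqref{eqnmu} forces $\|\hat x_s-x^\ast\|^2\le 2\Delta_s/\mu=r_s^2$, i.e. \eqref{strongconvexconstrain}, so that $x^\ast\in B(\hat x_s,r_s)$. This containment is what guarantees that whenever $\tilde{\mathcal{G}}_{FUSL}$ stops at its step~2 the level $l$ is a genuine lower bound on $f^\ast$; hence the significant/non-significant dichotomy of Lemma~\ref{lemSigPhase} applies verbatim phase by phase: a significant phase reduces the gap by the factor $q$ of \eqref{qdef}, while a non-significant phase leaves $\lb$ untouched and doubles $D$. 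Consequently the number of non-significant phases is at most $S_1$, and since $\Delta_{s+1}\le q\Delta_s$ across significant phases starting from $\Delta_1=\ub_1-\lb_1$, the number of significant phases is at most $\widetilde S$ of \eqref{eqnFAPLsc_phases}.

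The crux is the per-phase count. I would apply Proposition~\ref{Prop_FUSL_GAP} to phase $s$ with $R=r_s$ and $\Delta=\Delta_s$ and substitute $r_s=\sqrt{2\Delta_s/\mu}$ into \eqref{FUSLiteration}. The decisive observation is an exact cancellation: the first term becomes
\[
r_s\sqrt{\frac{cL_{\hat f}}{\theta\beta\Delta_s}}=\sqrt{\frac{2cL_{\hat f}}{\theta\beta\mu}},
\]
which no longer depends on $s$, while the second becomes $\tfrac{2\|A\|}{\theta\beta}\sqrt{cD_s/(\sigma_v\mu\Delta_s)}$. Thus each phase costs at most $\sqrt{2cL_{\hat f}/(\theta\beta\mu)}+\tfrac{2\|A\|}{\theta\beta}\sqrt{cD_s/(\sigma_v\mu\Delta_s)}+1$ iterations.

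Summing is then routine. The constant term together with the ``$+1$'' contributes $(S_1+\widetilde S)\bigl(\sqrt{2cL_{\hat f}/(\theta\beta\mu)}+1\bigr)$, matching the first term of part~a). For the $\|A\|$-terms I would split the sum: over the non-significant phases $m_1,\dots,m_{s_1}$ I use $D_{m_k}\le \tilde D\,2^{-(s_1-k)}$ (from the doublings, with $D_{m_{s_1}}\le\tilde D$) together with $\Delta_{m_k}>\epsilon$, producing a convergent geometric series in $2^{-1/2}$; over the significant phases $n_1,\dots,n_{s_2}$ I use $D_{n_k}\le\tilde D$ and $\Delta_{n_k}\ge\epsilon/q^{\,s_2-k}$, producing a convergent geometric series in $\sqrt q$. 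Each sum is a constant multiple of $\sqrt{\tilde D/\epsilon}$, the two constants being $\tfrac{1}{1-2^{-1/2}}$ and $\tfrac{1}{1-\sqrt q}$; consolidating them into the single coefficient gives the stated second term $\tfrac{4\|A\|\sqrt{\tilde D}}{\theta\beta(1-\sqrt q)}\sqrt{c/(\sigma_v\mu\epsilon)}$. I expect this consolidation of the two differently-decaying sums into one coefficient to be the only delicate bookkeeping (it is where the factor $4$, rather than the sharper $2+\sqrt2+\tfrac{1}{1-\sqrt q}$, is absorbed); the geometric-cancellation identity displayed above is what makes the whole estimate collapse cleanly.

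Part~b) would then be immediate from the same computation. If $D_{v,Y}$ is known and one sets $D_1=D_{v,Y}$, no non-significant phase can ever occur, since by Lemma~\ref{lemSigPhase} termination at step~\ref{stepGFUSL2D} would require $D<D_{v,Y}$, impossible once $D\ge D_{v,Y}$; hence $S_1=0$ and every $D_s\equiv D_{v,Y}$. Only the significant geometric sum in $\sqrt q$ then survives, and replacing $\tilde D$ by $D_{v,Y}$ while dropping the non-significant contribution collapses the bound to \eqref{eqnFUSLsc}.
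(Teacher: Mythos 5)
Your proposal is correct and follows essentially the same route as the paper: split phases into non-significant and significant, bound their counts by $S_1$ and $\widetilde S$, substitute $r_s=\sqrt{2\Delta_s/\mu}$ into Proposition~\ref{Prop_FUSL_GAP} to make the $L_{\hat f}$-term phase-independent, and sum the two geometric series before consolidating them into the single $\tfrac{4\|A\|}{\theta\beta(1-\sqrt q)}$ coefficient (valid since $q\ge 1/2$ forces $2^{-1/2}\le\sqrt q$). Part b) is handled identically, so no further comment is needed.
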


\begin{proof}
Similarly to the discussion in Theorem \ref{thmFUSL}, we classify the non-significant and significant phases and estimates the numbers of iterations performed by each type of phases. Suppose that the set of indices of the non-significant and significant phases are  $\{m_1,m_2,\ldots,m_{s_1}\}$ and $\{n_1,n_2,\ldots,n_{s_2}\}$ respectively. Then the number of nonsignificant phases is bounded by $S_1$, i.e., $s_1\le S_1$. And since $\Delta_1=\ub_1-\lb_1$, so the number of significant phases is bounded by $\widetilde{S}$ defined above, i.e., $s_2\le S_2$. 

In view of Proposition \ref{Prop_FUSL_GAP}, and substitute $r=\sqrt{\frac{2\Delta}{\mu}}$, we have for any phase
\begin{equation}
  \widetilde{N}(\Delta,D):=\sqrt{\frac{2cL_{\hat f}}{\theta\beta\mu}}+\frac{2\|A\|}{\theta\beta}\sqrt{\frac{cD}{\sigma_v\mu\Delta}}+1.
\end{equation}
Following similar discussion in Theorem \ref{thmFUSL}, we have the number of iterations performed by non-significant phases in the modified FUSL method is bounded by
\begin{align}
\widetilde{N}_1=&\sum_{k=1}^{s_1}\widetilde{N}(\Delta_{m_k},D_{m_k})\le\sum_{k=1}^{s_1}\widetilde{N}(\epsilon,\tilde{D}/2^{s_1-k})\\
              &\le S_1\left(\sqrt{\frac{2cL_{\hat{f}}}{\theta\beta\mu}}+1\right)+\frac{2\|A\|}{\theta\beta}\sqrt{\frac{c\tilde{D}}{\sigma_v\mu\epsilon}}\sum_{k=1}^{S_1}q^{\frac{S_1-k}{2}}\\
              & \le S_1\left(\sqrt{\frac{2cL_{\hat{f}}}{\theta\beta\mu}}+1\right)+\frac{2\|A\|}{\theta\beta(1-\sqrt{q})}\sqrt{\frac{c\tilde{D}}{\sigma_v\mu\epsilon}}.
\end{align}
And the bound on number of iterations performed by all significant phases is given by
\begin{align}
\label{eqnFUSLstronglyconvex}
\widetilde{N}_2=&\sum_{k=1}^{s_2}\widetilde{N}(\Delta_{n_k},D_{n_k})\le\sum_{k=1}^{s_2}\widetilde{N}(\epsilon/q^{s_2-k},\tilde{D})\\
              &\le \widetilde{S}\left(\sqrt{\frac{2cL_{\hat{f}}}{\theta\beta\mu}}+1\right)+\frac{2\|A\|}{\theta\beta}\sqrt{\frac{c\tilde{D}}{\sigma_v\mu\epsilon}}\sum_{k=1}^{\widetilde{S}}q^{\frac{\widetilde{S}-k}{2}}\\
              & \le \widetilde{S}\left(\sqrt{\frac{2cL_{\hat{f}}}{\theta\beta\mu}}+1\right)+\frac{2\|A\|}{\theta\beta(1-\sqrt{q})}\sqrt{\frac{c\tilde{D}}{\sigma_v\mu\epsilon}}.
\end{align}
Therefore, the total number of iterations is bounded by $\widetilde{N}_1+\widetilde{N}_2$, and thus part a) holds.

For part b), in view of Lemma \ref{lemSigPhase}, we can see that if $D_1=D_{v,Y}$, then $D_s\equiv D_{v,Y}$ for all $s\ge 1$, and all phases of the modified FUSL method are significant. Therefore, replace $D_{n_k}$ and $\tilde{D}$ in \eqref{eqnFUSLstronglyconvex}, we can conclude part b) holds.
\end{proof}

\vgap

In view of the above Theorem \ref{thmStrongFUSL}, we can see that the iteration complexity of the modified FUSL method for solving
the structured CP problem \eqref{USLpro} is bounded by
$
	\cO\left(\|A\|/\sqrt{\epsilon} \right).
$

\setcounter{equation}{0}
\section{Numerical experiments}
\label{secExperiments}
In this section we present our experimental results of solving a few large-scale CP problems, including the quadratic programming problems with large Lipschitz constants, and two different types of variation based image reconstruction problems, using the FAPL and FUSL methods, and compare them with some other first-order algorithms. All the algorithms were implemented in MATLAB, Version R2011a and all experiments were performed on a desktop with an Inter Dual Core 2 Duo 3.3 GHz CPU and 8G memory.
\subsection{Quadratic programming}
\label{secQP}
The main purpose of this section is to investigate the performance of the FAPL method for solving smooth CP problems especially with large Lipschitz constants. For this purpose, we consider the quadratic programming problem:
\begin{equation}
\label{QPprob}
\min_{\|x\|\le 1}\|Ax-b\|^2,
\end{equation}
where $A\in \mathbb{R}^{m \times n}$ and $b\in \mathbb{R}^m$. We compare the FAPL method with Nesterov's optimal method (NEST) for smooth functions \cite{Nest05-1}, NERML \cite{BenNem05-1}, and APL \cite{Lan13-1}. We also compare the FAPL method with the
the built-in Matlab linear system solver in view of its good practical performance. In the APL method, the subproblems are solved by MOSEK \cite{Mosek}, an efficient software package for linear and second-order cone programming. Two cases with different choices of the initial lower bound LB in this experiments are conducted: (1). $LB=0$ and (2). $LB=-\infty$.


In our experiments, given $m$ and $n$, two types of matrix $A$ are generated. The first type of matrix $A$ is randomly generated with entries uniformly distributed in [0,1], while the entries of the second type are normally distributed according to $N(0,1)$.  We then randomly choose an optimal solution $x^{\ast}$ within the unit ball in $\mathbb{R}^n$, and generate the data $b$ by $b=Ax^{\ast}$. We apply all the four methods to solve \eqref{QPprob} with this set of data $A$ and $b$, and the accuracy of the generated solutions are measured by $e_k=\|Ax_k-b\|^2$. The results are shown in Tables \ref{QPtableone}, \ref{QPtabletwo} and \ref{QPtablethree}.

\begin{table}[!hbp]
\centering
\caption{Uniformly distributed QP instances}
\label{QPtableone}
\begin{tabular}{|l|l|lll|lll|}

\hline
\multicolumn{8}{|c|}{$A:n=4000, m=3000, L=2.0e6, e_0=2.89e4$} \\
\hline
Alg & LB & Iter.  & Time & Acc. & Iter.  & Time & Acc.\\
\hline
FAPL & 0 & 103 & 3.06 & 9.47e-7 & 142  & 3.76 & 8.65e-9 \\

 & $-\infty$ & 277  & 6.55 & 5.78e-7 & 800  & 19.18 & 2.24e-11\\
 \hline
 APL & 0 & 128 & 37.10 & 9.07e-7 & 210  & 60.85 & 9.82e-9 \\
  & $-\infty$ & 300  & 85.65 & 6.63e-6 & 800  &234.69 & 2.59e-9\\
 \hline
 NERML & 0 & 218 & 58.32 & 9.06e-7 & 500  & 134.62 & 1.63e-8 \\
 & $-\infty$ & 300  & 84.01 & 1.02e-2 & 800 & 232.14 & 1.71e-3\\
 \hline
  NEST & - & 10000 & 220.1 & 3.88e-5 & 20000  & 440.02 & 3.93e-6\\
  \hline

\end{tabular}

\begin{tabular}{|l|l|lll|lll|}

\hline
\multicolumn{8}{|c|}{$A:n=8000, m=4000, L=8.0e6, e_0=6.93e4$} \\
\hline
Alg & LB & Iter.  & Time & Acc. & Iter.  & Time & Acc.\\
\hline
FAPL & 0 & 70 & 4.67 & 7.74e-7 & 95  & 6.46 & 6.85e-10 \\

 & $-\infty$ & 149  & 8.99 & 6.27e-7 & 276  & 16.94 & 6.10e-10\\
 \hline
 APL & 0 & 79 & 71.24 & 7.79e-7 &144  & 129.52 & 3.62e-9 \\
  & $-\infty$ & 248  & 205.48 & 8.16e-7 & 416  & 358.96 & 8.68e-9\\
 \hline
 NERML & 0 & 153 & 128.71 & 7.30e-7 & 300  & 251.79 & 4.03e-9 \\
 & $-\infty$ & 300  & 257.54 & 1.18e-3 & 800 & 717.13 & 9.24e-5\\
 \hline
  NEST & - & 10000 & 681.03 & 5.34e-5 & 20000  & 1360.52 & 4.61e-6\\
  \hline

 \end{tabular}

\begin{tabular}{|l|l|lll|lll|}

\hline
\multicolumn{8}{|c|}{FAPL method for large dimension matrix} \\
\hline
Matrix A:$m\times n$ & LB & Iter.  & Time & Acc. & Iter.  & Time & Acc.\\
\hline
 $10000\times 20000$& 0 & 97 & 36.65 & 6.41e-11 & 185  & 69.31 & 7.29e-21 \\

 L=5.0e7 & $-\infty$ & 207  & 73.70 & 8.28e-8 & 800  & 292.06 & 2.32e-15\\
 \hline
 $10000\times 40000$ & 0 & 67 & 49.95 & 9.21e-11 & 122  & 91.49 & 7.27e-21 \\
 L=1.0e8 & $-\infty$ & 130  & 88.40 & 7.11e-8 & 421  & 295.15 & 1.95e-16\\
 \hline
 $10000\times 60000$ & 0 & 52 & 58.06 & 7.68e-11 & 95  & 106.14 & 8.43e-21 \\
 L=1.5e8& $-\infty$ & 156  & 160.93 & 9.84e-8 & 394 & 422.4 & 7.48e-16\\
  \hline

\end{tabular}

\end{table}

\begin{table}[!hbp]
\centering
\caption{Gaussian distributed QP instances}
\begin{tabular}{|l|l|lll|lll|}

\hline
\multicolumn{8}{|c|}{$A:n=4000, m=3000, L=2.32e4, e_0=2.03e3$} \\
\hline
Alg & LB & Iter.  & Time & Acc. & Iter.  & Time & Acc.\\
\hline
FAPL & 0 & 105 & 2.78 & 8.43e-7 & 153  & 4.10 & 7.84e-10 \\
 & $-\infty$ & 338  & 8.02 & 6.86e-7 & 696  & 16.58 & 9.74e-10\\
 \hline
 APL & 0 & 128 & 35.12 & 9.01e-7 &172  & 47.49 & 9.28e-9 \\
  & $-\infty$ &639  & 200.67 & 7.92e-7 & 800  &258.25   &1.03e-7 \\
 \hline
 NERML & 0 & 192 & 48.44 & 7.05e-7 & 276  & 70.31 & 1.09e-8 \\
 & $-\infty$ & 300  & 93.32 & 3.68e-1 & 800 & 257.25 & 6.41e-2\\
 \hline
  NEST & - & 10000 & 211.30 & 7.78e-4 & 20000  & 422.78 & 1.95e-4\\
  \hline

 \end{tabular}

\begin{tabular}{|l|l|lll|lll|}
\hline
\multicolumn{8}{|c|}{$A:n=8000, m=4000, L=2.32e4, e_0=2.03e3$} \\
\hline
Alg & LB & Iter.  & Time & Acc. & Iter.  & Time & Acc.\\
\hline
FAPL & 0 & 49 & 3.25 & 8.34e-7 & 68  & 4.37 & 7.88e-10 \\
 & $-\infty$ & 165  & 9.77 & 5.17e-7 & 280  & 16.18 & 5.06e-10\\
 \hline
 APL & 0 & 59 & 48.91 & 8.59e-7 &78  & 64.95 & 1.70e-8 \\
  & $-\infty$ & 300  & 268.47 & 9.81e-7 & 670  & 637.70 & 9.42e-10\\
 \hline
 NERML & 0 & 105 & 181.23 & 9.14e-7 & 133  & 102.68 & 1.39e-8 \\
 & $-\infty$ & 300  & 282.56 & 9.92e-3 & 800 & 760.26 & 8.32e-4\\
 \hline
  NEST & - & 10000 & 567.59 & 3.88e-4 & 20000  & 1134.38 & 9.71e-5\\
  \hline

\end{tabular}
\begin{tabular}{|l|l|lll|lll|}

\hline
\multicolumn{8}{|c|}{FAPL method for large dimension matrix} \\
\hline
Matrix A:$m\times n$ & LB & Iter.  & Time & Acc. & Iter.  & Time & Acc.\\
\hline
 $10000\times 20000$& 0 & 78 & 27.88 & 7.22e-11 & 145  & 51.81 & 6.81e-21 \\

 L=5.7e4 & $-\infty$ & 228  & 78.57 & 9.92e-8 & 800  & 280.19 & 1.37e-15\\
 \hline
 $10000\times 40000$ & 0 & 48 & 34.36 & 5.97e-11 &87  & 62.24 & 8.26e-21 \\
 L=9e4 & $-\infty$ & 156  & 106.12 & 7.18e-8 & 390  & 271.15 & 4.29e-16\\
 \hline
 $10000\times 60000$ & 0 & 34 & 36.30 & 9.88e-11 & 65  & 69.56 & 7.24e-21 \\
 L=1.2e5& $-\infty$ & 98  & 98.11 & 9.50e-8 & 350 & 361.83 & 8.34e-16\\
  \hline

\end{tabular}
\label{QPtabletwo}
\end{table}

\begin{table}
\centering
\caption{Comparison to Matlab solver}
  \begin{tabular}{|l|ll|lll|}
\hline
\multirow{ 2}{*}{Matrix A:$m\times n$} & \multicolumn{2}{|c|}{Matlab $A\backslash b$} & \multicolumn{3}{|c|}{FAPL method}\\ \cline{2-6}
    & Time & Acc. & Iter.  & Time & Acc.\\
\hline
Uniform $2000\times4000$ & 4.41 &  5.48e-24 & 204&3.59&6.76e-23\\
\hline
Uniform $2000\times6000$ & 7.12 &9.04e-24 &155&4.10&9.73e-23\\
\hline
Uniform $2000\times8000$ & 9.80  &9.46e-24  & 135 &4.45 &9.36e-23\\
\hline
Uniform $2000\times10000$ & 12.43  &1.04e-23  & 108 &4.23 &7.30e-23\\

\hline
Gaussian $3000\times5000$ & 11.17  &5.59e-25     & 207 &6.25 &7.18e-23\\
\hline
Gaussian $3000\times6000$ & 13.96  &1.43e-24     & 152 &5.50 &9.59e-23\\
\hline
Gaussian $3000\times8000$ & 19.57  &1.66e-24     & 105 &4.83 &8.17e-23\\
\hline
Gaussian $3000\times10000$ & 25.18  &1.35e-24     & 95 & 5.43 &5.81e-23\\
\hline
  \end{tabular}
  \label{QPtablethree}
\end{table}
The advantages of the FAPL method can be observed from these experiments. Firstly, it is evident that BL type methods have much less iterations than NEST especially when the Lipschitz constant is large. Among these three BL type methods, NERML requires much more iterations than APL and FAPL, which have optimal iteration complexity for this problem.

Secondly, compared with previous BL type methods (APL and NERML), FAPL has much lower computational cost for each iteration. The computational cost of FAPL method for each iteration is just slightly larger than that of NEST method. However, the cost of each iteration of APL and NERML is $10$ times larger than that of NEST.

Thirdly, consider the difference of performance for setting the lower bound to be 0 and $-\infty$, it is also evident that FAPL method is more robust to the choice of the initial lower bound and it updates the lower bound more efficiently than the other two BL methods. Though setting the lower bound to $-\infty$ increases number of iterations for all the three BL method, a close examination reveals that the difference between setting the lower bound to zero and $-\infty$ for FAPL method is not so significant as that for APL and NERML methods, especially for large matrix, for example, the second one in Table \ref{QPtableone} .

Fourthly,  FAPL needs less number of iterations than APL, especially when the required accuracy is high. A plausible explanation is that exactly solving the subproblems provides better updating for the prox-centers, and consequently, more accurate prox-centers improve the efficiency of algorithm significantly. The experiments show that, for APL and NERML, it is hard to improve the accuracy beyond $10^{-10}$. However, FAPL can keep almost the same speed for deceasing the objective value from $10^6$ to $10^{-21}$.

Finally, we can clearly see from Table \ref{QPtablethree} that FAPL is comparable to or significantly outperform the built-in Matlab solver for randomly generated linear systems, even though our code is implemented in MATLAB rather than lower-level languages, such as C or FORTRAN. We can expect that the efficiency of FAPL will be much improved by using C or FORTRAN implementation, which has been used in the MATLAB solver for linear systems.

In summary, due to its low iteration cost and effective usage of the memory of first-order information, the FAPL method is a powerful tool for solving smooth CP problems especially when the number of variables is huge and/or the value of Lipschitz constant is large.

\subsection{Total-variation based image reconstruction}
\label{sectv}
In this subsection, we apply the FUSL method to solve the non-smooth total-variation (TV) based image reconstruction problem:
\begin{equation}
\label{USLnumericalpro}
\min_{u\in \mathbb{R}^N}\frac{1}{2}\|Au-b\|_2^2+\lambda\|u\|_{TV},
\end{equation}
where $A$ is a given matrix, u is the vector form of the image to be reconstructed, $b$ represents the observed data,
and $\|\cdot\|_{TV}$ is the discrete TV semi-norm defined by
\begin{equation}
\label{TVnormdef}
\|u\|_{TV}:=\sum_{i=1}^N\|D_iu\|_2,
\end{equation}
where $D_iu\in\mathbb{R}^2$ is a discrete gradient (finite differences along the coordinate directions) of  the $i$-th component of u, and $N$ is the number of pixels in the image. The $\|u\|_{TV}$ is convex and non-smooth. 

One of the approaches to solve this problem is to consider the associated dual or primal-dual formulations of \eqref{TVnormdef} based on the dual formulation of the TV norm:
\begin{equation}
\|u\|_{TV}=\max_{p\in Y} \left\langle p,Du\right\rangle , \text{where } Y=\{p=(p_1,\ldots,p_N)\in \mathbb{R}^{2N}:p_i\in \mathbb{R}^2,\|p_i\|_2\le 1,1\le i\le N  \}.
\end{equation}
Consequently, we can rewrite \eqref{USLnumericalpro} as a saddle-point problem:
\begin{equation}
\label{FUSLnumericalSP}
\min_{u\in \mathbb{R}^N}\max_{p\in Y}\frac{1}{2}\|Au-b\|^2_2+\lambda\left\langle p,Du\right\rangle .
\end{equation}
Note that \eqref{FUSLnumericalSP} is exactly the form we considered in the USL and FUSL method if we let $\hat{g}(y)=0$. Specifically, the prox-function $v(y)$ on Y is simply chosen as $v(y)=\frac{1}{2}\|y\|^2$ in these smoothing techniques.

In our experiments, we consider two types of instances depending on how the matrix $A$ is generated. Specifically, for the first case, the entries of $A$ are normally distributed, while for the second one, the entries are uniformly distributed.
For both types of instances, first, we generated the matrix $A\in \mathbb{R}^{m\times n}$, then choose some true image $x_{ture}$ and convert it to a vector, and finally compute $b$ by $b=Ax_{true}+\epsilon$, where $\epsilon$ is the Gaussian noise with distribution $\epsilon=N(0,\sigma)$. We compare the following algorithms: the accelerated primal dual (APD) method \cite{CheLanOu13-1}, Nesterov's smoothing (NEST-S) method \cite{Nest05-1,BeBoCa09-1}, and FUSL method.

For our first experiment, the matrix $A$ is randomly generated of size $4,096 \times 16,384$ with entries normally distributed according to $N(0,\sqrt{4,096})$, the image $x_{true}$ is a $128 \times 128$ Shepp-Logan phantom generated by MATLAB. Moreover, we set $\lambda=10^{-3}$ and the standard deviation  $\sigma=10^{-3}$. The Lipschitz constants are provided for APD and NEST-S, and the initial lower bound for FUSL method is set to $0$. We run $300$ iterations for all these algorithms, and report the objective value of problem \eqref{USLnumericalpro} and the relative error defined by $\|x_k-x_{true}\|_2/\|x_{true}\|_2$ as shown in  Figure \ref{figphantom}.
In our second experiment, the matrix $A$ is randomly generated with entries uniformly distributed in $[0,1]$. We use a $200 \times 200$ brain image \cite{ChHaHuPhYeYin2012-1} as the true image $x_{true}$, and set $m=20,000, \lambda=10, \sigma=10^{-2}$. Other setup is the same as the first experiment, and the results are shown in Figure \ref{figtv2}.

\begin{figure}[!htp]
 \includegraphics[width=0.5\linewidth]{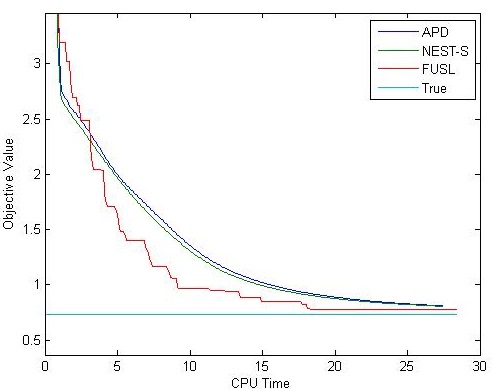}
 \includegraphics[width=0.5\linewidth]{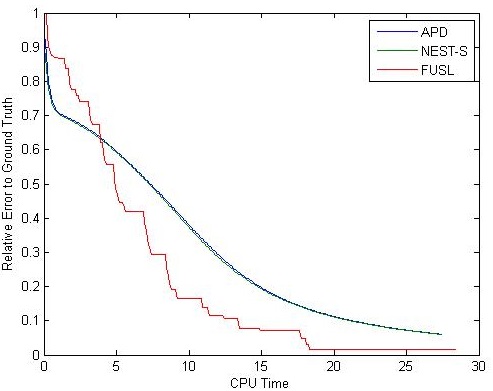}\\
  \includegraphics[width=1.0\linewidth]{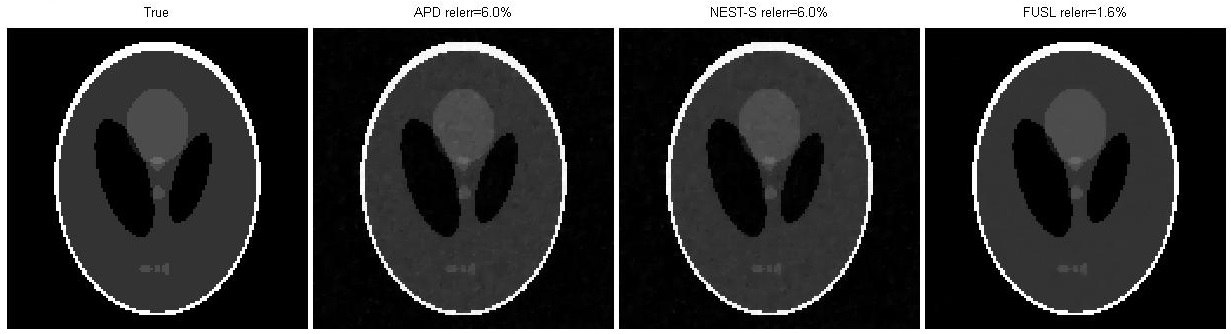}
  \caption{TV-based reconstruction (Shepp-Logan phantom)}
  \label{figphantom}
 \end{figure}

\begin{figure}[!htp]
 \includegraphics[width=0.50\linewidth]{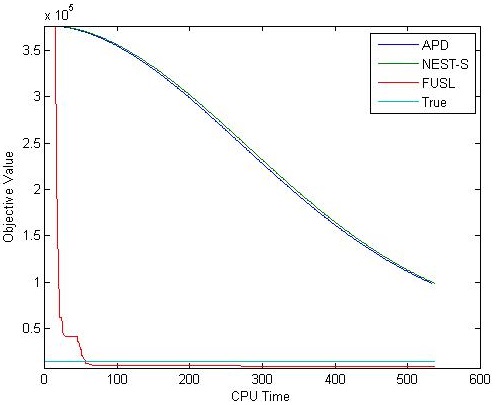}
 \includegraphics[width=0.50\linewidth]{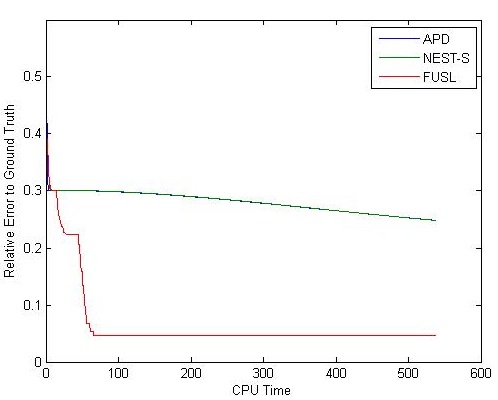}\\
 \includegraphics[width=1.0\linewidth]{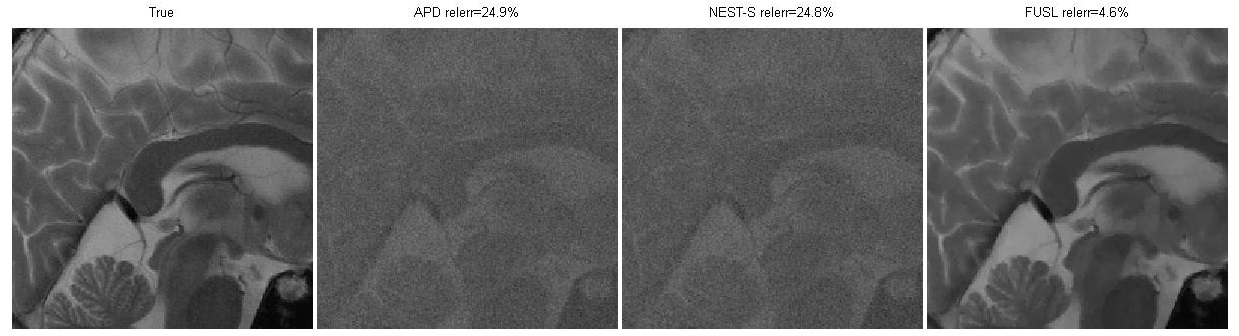}
  \caption{TV-based reconstruction (brain image)}
  \label{figtv2}
 \end{figure}

We make some observations about the results in Figures \ref{figphantom} and \ref{figtv2}. For the first experiment, there is almost no difference between APD and NEST-S method, but FUSL outperforms both of them after $5$ seconds in terms of both objective value and relative error.
The second experiment clearly demonstrates the advantage of FUSL  for solving CP problems with large Lipschitz constants. The Lipschitz constant of matrix $A$ in this instance is about $2 \times 10^8$, much larger than the Lipschitz constant (about $5.9$) in the first experiment. FUSL still converges quickly and decreases the relative error to $0.05$ in less than $100$ iterations, while APD and NEST-S converge very slowly and more than $1,000$ steps are required due to the large Lipschitz constants. It seems that FUSL is not so sensitive to the Lipschitz constants as the other two methods. This feature of FUSL makes it more efficient for solving large-scale CP problems which often have big Lipschitz constants.

In summary, for the TV-based image reconstruction problem~\eqref{USLnumericalpro}, FUSL not only enjoys the completely parameter-free property (and hence no need to estimate the Lipschitz constant), but also demonstrates significant advantages for its speed of convergence and
its solution quality in terms of relative error, especially for large-scale problems.

\subsection{Partially parallel imaging}
\label{secPPI}
In this subsection, we compare the performance of the FUSL method with several related algorithms in reconstruction of magnetic resonance (MR) images from partial parallel imaging (PPI), to further confirm the observations on advantages of this method. The detailed background and description of PPI reconstruction can be found in \cite{ChHaHuPhYeYin2012-1}. This image reconstruction problem in two dimensional cases can be modeled as
$$\min_{u\in C^n}\sum_{j=1}^k\|M\mathcal{F}S_ju-f_j\|^2+\lambda\sum_{i=1}^N\|D_iu\|_2,$$
where $u$ is the vector form of a two-dimensional image to be reconstructed, $k$ is the number of MR coils (consider them as sensors) in the parallel imaging system. $F\in C^{n\times n}$ is a 2D discrete Fourier transform matrix, $S_j\in C^{n\times n}$ is the sensitivity map of the $j$-th sensor, and $M\in R^{n\times n}$ is a binary mask describes the scanning pattern. Note that the percentages of nonzero elements in $M$ describes the compression ration of PPI scan. In our experiments, the sensitivity map $\{S_j\}_{j=1}^k$ is shown in Figure \ref{figppisensmap}, the image $x_{true}$ is of size $512 \times 512$ shown in Figures \ref{figppione} and \ref{figppitwo}, and the measurements $\{f_j\}$ are generated by
\begin{equation}
\label{PPIdatagen}
f_j=M(FS_jx_{true}+\epsilon^{re}_j/\sqrt{2}+\epsilon^{im}_j/\sqrt{-2}),\ j=1,\ldots,k,
\end{equation}
where $\epsilon^{re}_j,\epsilon^{im}_j$ are the noise with entries independently distributed according to $N(0,\sigma)$. We conduct two experiments on this data set with different acquisition rates, and compare the FUSL method to NEST-S method, and the accelerated linearized alternating direction of multipliers (AL-ADMM) with line-search method \cite{OuChLaPa14-1}.

For both experiments, set $\sigma=3\times 10^{-2},\lambda=10^{-5}$, and $\{f_j\}_{j=1}^k$ are generated by \eqref{PPIdatagen}. In the first experiment,
we use Cartesian mask with acquisition rate $14\%$: acquire image in one row for every successive seven rows, while for the second one, we use Cartesian mask with acquisition rate $10\%$: acquire image in one row for every successive ten rows. The two masks are shown in Figure \ref{figppisensmap}. The results of the first and second experiment are shown in Figures \ref{figppione} and \ref{figppitwo} respectively.
These experiments again demonstrate the advantages of the FUSL method over these state-of-the-art techniques for PPI image reconstruction,

\begin{figure}[!htp]
 \includegraphics[width=0.48\linewidth]{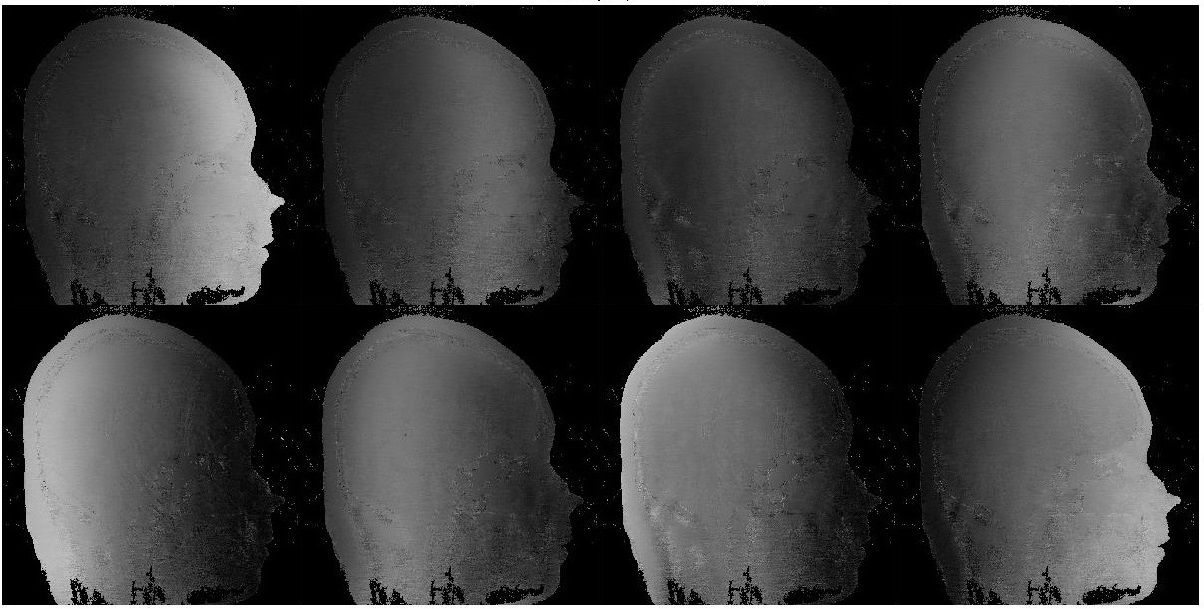}
 \includegraphics[width=0.25\linewidth]{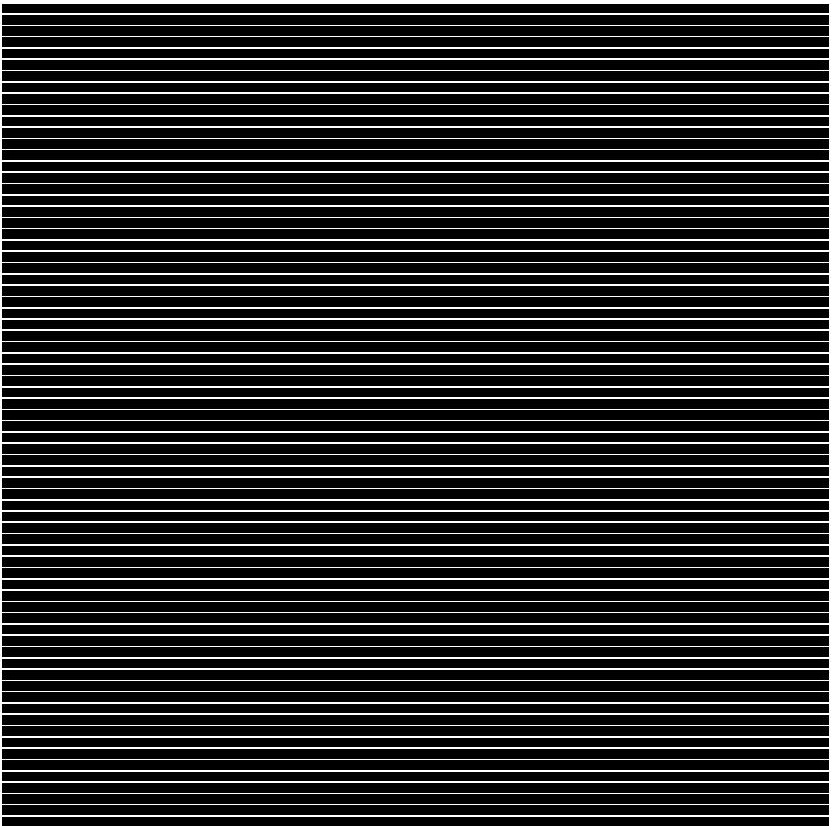}
  \includegraphics[width=0.25\linewidth]{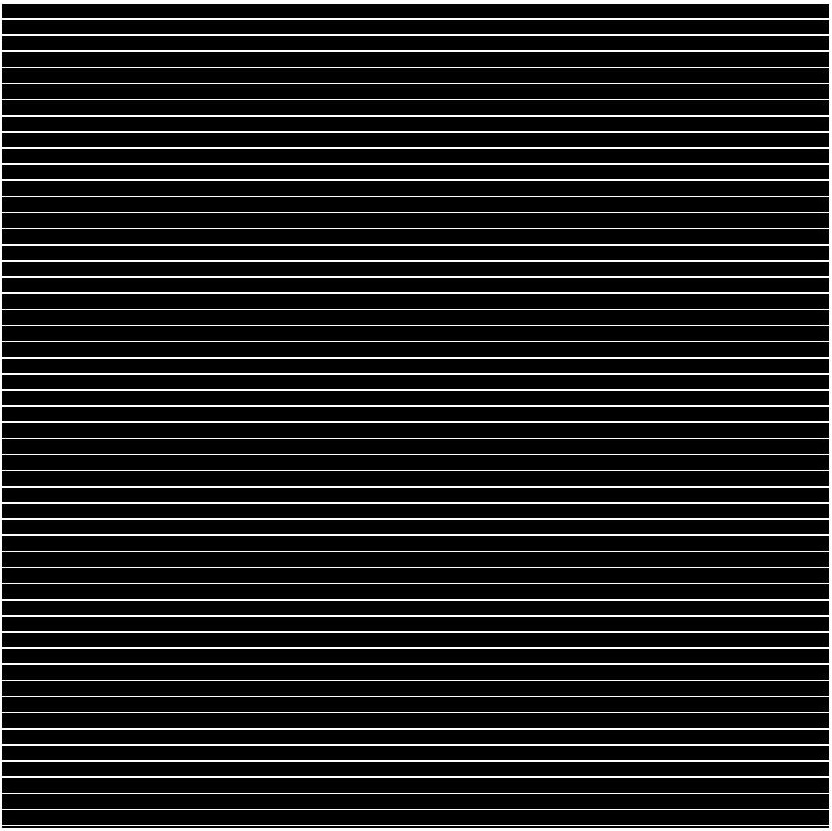}
 \caption{Sensitivity map and Cartesian mask}
 \label{figppisensmap}
 \end{figure}

\begin{figure}[!htp]
 \includegraphics[width=0.50\linewidth]{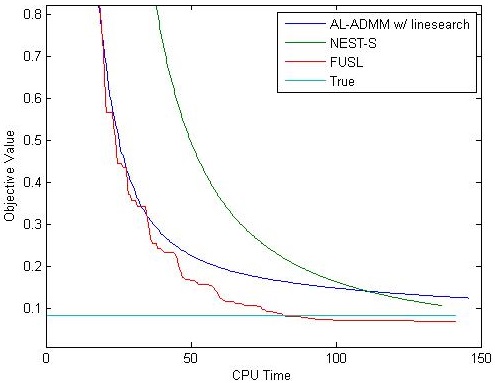}
 \includegraphics[width=0.50\linewidth]{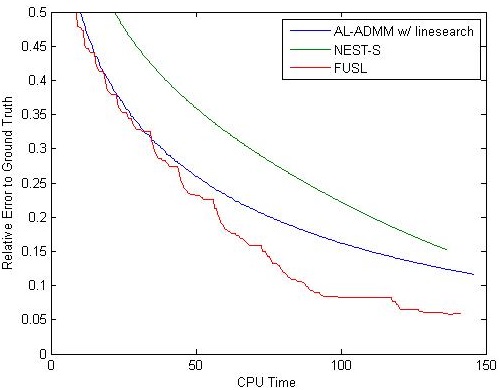}\\
  \includegraphics[width=1.0\linewidth]{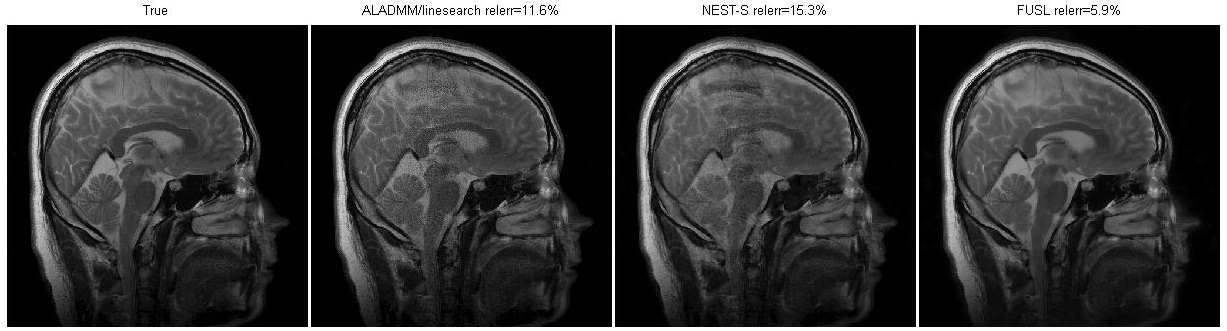}
  \caption{PPI image reconstruction (acquisition rate: $14\%$)}
  \label{figppione}
 \end{figure}

\begin{figure}[!htp]
 \includegraphics[width=0.5\linewidth]{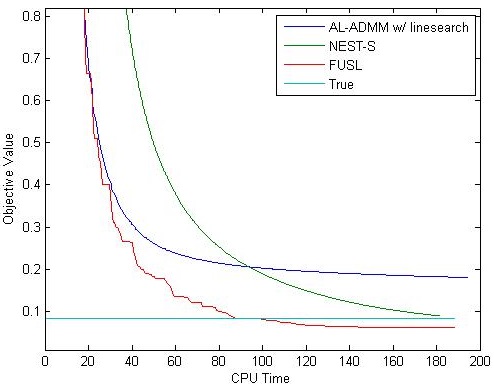}
 \includegraphics[width=0.5\linewidth]{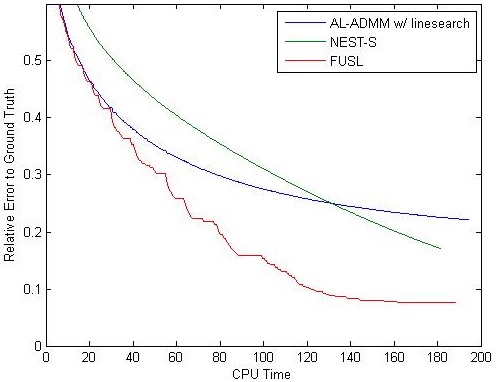}\\
 \includegraphics[width=1.0\linewidth]{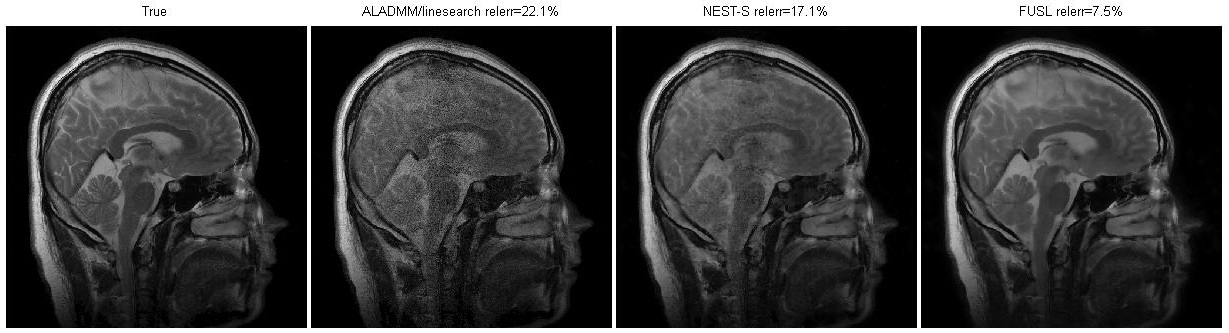}
  \caption{PPI image reconstruction (acquisition rate: $10\%$)}
  \label{figppitwo}
 \end{figure}

\section{Concluding remarks}
In this paper, we propose two new bundle-level type methods, the FAPL and FUSL methods, to uniformly solve black-box smooth, nonsmooth, and weakly smooth CP problems and a class of structured nonsmooth problems. Our methods achieve the same optimal iteration complexity and maintain all the nice features of the original APL and USL methods. Meanwhile, by simplifying the subproblems involved in the algorithms and solving them exactly, the FAPL and FUSL methods reduce the iteration cost and
increase the accuracy of solutions significantly,  and thus overcome the drawback of these existing bundle-level type methods applied to large-scale CP problems. Furthermore, by introducing a generic algorithmic framework, we extend the uniformly optimal bundle-level type methods to unconstrained problems and broaden the applicability of these algorithms.  The complexity of bundle-level type methods for unconstrained convex optimizations has been analyzed for the first time in the literature. The numerical results for least square problems and total variation based image reconstruction clearly demonstrate the advantages of FAPL and FUSL methods over the original APL, USL and some other state-of-the-art first-order methods.


\newcommand{\noopsort}[1]{} \newcommand{\printfirst}[2]{#1}
  \newcommand{\singleletter}[1]{#1} \newcommand{\switchargs}[2]{#2#1}

\end{document}